\numberwithin{equation}{section}
\theoremstyle{plain}
\newtheorem{theorem}{Theorem}[section]
\newtheorem{lemma}[theorem]{Lemma}
\theoremstyle{remark}
\newtheorem*{example}{Example}
\newcommand{\E}{\mathrm{E}}
\definecolor{iblue}{rgb}{0.1,0,0.75}
\definecolor{ired}{rgb}{0.9,0,0.1}
\newcommand{\half}{\Small{\frac{1}{2}}}
\newcommand{\scrB}{{\mathscr B}}
\newcommand{\scrC}{{\mathscr C}}
\newcommand{\scrI}{{\mathscr I}}
\newcommand{\scrL}{{\mathscr L}}
\def\cC{\mathcal C}
\def\cF{\mathcal F}
\def\cG{\mathcal G}
\def\cK{\mathcal K}
\def\cP{\mathcal P}
\def\cR{\mathcal R}
\def\cS{\mathcal S}
\newcommand{\bbE}{{\mathbb E}}
\newcommand{\bbR}{{\mathbb R}}
\newcommand{\bbP}{{\mathbb P}}
\newcommand{\bbQ}{{\mathbb Q}}
\newcommand{\ie}{{\it i.e.}}
\newcommand{\eg}{{\it e.g.}}
\newcommand{\Levy}{L\'{e}vy}
\newcommand{\bc}{\begin{center}}
\newcommand{\ec}{\end{center}}
\newcommand{\be}{\begin{equation}}
\newcommand{\ee}{\end{equation}}
\newcommand{\ba}{\begin{array}}
\newcommand{\ea}{\end{array}}
\newcommand{\bean}{\setlength\arraycolsep{1pt}\begin{eqnarray*}}
\newcommand{\eean}{\end{eqnarray*}}
\newcommand{\bea}{\setlength\arraycolsep{1pt}\begin{eqnarray}}
\newcommand{\eea}{\end{eqnarray}}
\newcommand{\ben}{\begin{enumerate}}
\newcommand{\een}{\end{enumerate}}
\newcommand{\bed}{\begin{itemize}}
\newcommand{\eed}{\end{itemize}}
\def\half{\hbox{$1\over2$}}
\def\iidsim{ \stackrel{\rm iid}{\sim} }
\begin{document}

\begin{frontmatter}
\title{Posterior asymptotics in Wasserstein metrics on the real line
}
\runtitle{Posterior asymptotics in Wasserstein metrics}

\begin{aug}
\author{\fnms{Minwoo} \snm{Chae} 
\ead[label=e1]{mchae@postech.ac.kr}}

\address{Department of Industrial and Management Engineering\\
Pohang University of Science and Technology, South Korea
}

\author{\fnms{Pierpaolo} \snm{De Blasi} 
\ead[label=e2]{pierpaolo.deblasi@unito.it}}

\address{University of Torino and Collegio Carlo Alberto, Italy\\
}

\and

\author{\fnms{Stephen G.} \snm{Walker}
\ead[label=e3]{s.g.walker@math.utexas.edu}
}

\address{Department of Mathematics\\
University of Texas at Austin, USA\\
}

\runauthor{Chae, De Blasi and Walker}

\affiliation{Some University and Another University}

\end{aug}

\begin{abstract}
In this paper, we use the class of Wasserstein metrics to study asymptotic properties of posterior distributions. 
Our first goal is to provide sufficient conditions for posterior consistency. 
In addition to the well-known Schwartz's Kullback--Leibler condition on the prior, the true distribution and most probability measures in the support of the prior are required to possess moments up to an order which is determined by the order of the Wasserstein metric.
We further investigate convergence rates of the posterior distributions for which we need stronger moment conditions.
The required tail conditions are sharp in the sense that the posterior distribution may be inconsistent or contract slowly to the true distribution without these conditions.
Our study involves techniques that build on recent advances on Wasserstein convergence of empirical measures. We apply the results to some examples including a Dirichlet process mixture prior and conduct a simulation study for further illustration.
\end{abstract}

\begin{keyword}[class=MSC]
\kwd[Primary ]{62F15}
\kwd{62G20}
\kwd[; secondary ]{62G07}
\end{keyword}

\begin{keyword}
\kwd{Dirichlet process mixture}
\kwd{nonparametric Bayesian inference}
\kwd{posterior consistency}
\kwd{posterior convergence rate}
\kwd{Wasserstein metrics}
\end{keyword}

\setcounter{tocdepth}{1}

\tableofcontents
\end{frontmatter}

\section{Introduction\label{sec:intro}}

The Wasserstein distance originally arose in the problem of optimal transportation \citep{villani2003topics} and is often called the Kantorovich or transportation distance.
We refer to \cite{vershik2013long} for the history about this metric. For two Borel probability measures $P$ and $Q$ on the real line, the Wasserstein metric of order $p$, $p \in [1, \infty)$, is defined as
\bean 
	W_p (P,Q) = \inf_{\pi \in \scrC(P, Q)} \left( \int_{\bbR^2} |x-y|^p d\pi(x,y) \right)^{1/p},
\eean
where $\scrC(P, Q)$ is the set of every coupling $\pi$ of $P$ and $Q$, that is, a Borel probability measure on $\bbR^2$ with marginals $P$ and $Q$, respectively.

There are a wide number of applications of Wasserstein metrics, \eg\
Wasserstein generative adversarial networks (GAN; \cite{arjovsky2017wasserstein,gulrajani2017improved}), approximate Markov chain Monte Carlo (MCMC; \cite{rudolf2018perturbation}), distributionally robust optimization (DRO; \cite{kuhn2019wasserstein}) and clustering (\cite{biau2008performance, laloe2010quantization}). However, exhaustive study on statistical properties such as the convergence behavior of the empirical measure with respect to $W_p$ have been conducted only recently, see \cite{bobkov2019one, fournier2015rate, weed2019sharp, dereich2013constructive}.
In particular, the great success of Wasserstein GAN  in machine learning society accelerated the study of Wasserstein metrics in statistics community as a discrepancy measure between probabilities; \cite{singh2018nonparametric, liang2018well, biau2018some}. Recently, \cite{bernton2019jrssb} proposed the use of the Wasserstein distance in the implementation of Approximate Bayesian Computation (ABC) to approximate the posterior distribution. 
In nonparametric Bayesian inference, \cite{nguyen2013convergence, chae2019bayesianConsistency} used Wasserstein metrics to study asymptotic properties of posterior distributions, but $W_p$ was considered as a distance between mixing distributions rather than a distance between mixture densities themselves.
As a result, the Wasserstein metrics in these papers yielded a stronger topology than the total variation distance on the space of density functions. 
In general, $W_p$, $1 \leq p < \infty$, metrizes the weak convergence of probability measures in a bounded metric space.
Specifically, if the diameter of the underlying metric space is bounded by 1, one has the relationship $d_P^2 \leq W_1 \leq 2d_P \leq d_V$, where $d_P$ and $d_V$ are \Levy-Prokhorov and total variation distances, see \cite{gibbs2002choosing}.
In an unbounded metric space, the second and third inequalities do not hold because $W_1$ is not a bounded metric.

In this article, we utilize the Wasserstein distances to study asymptotic behavior of posterior distributions under the assumption that data are generated from a fixed true distribution and we focus on nonparametric Bayesian density estimation on the real line. To set the stage, let $X_1, \ldots, X_n$ be the observations which are independent and identically distributed random variables from the true distribution $P_0$ possessing a density $p_0$.
Let $\cF$ be a collection of probability densities in $\bbR$ equipped with the weak topology, and $\Pi$ be a prior distribution on $\cF$.
Then the posterior probability of a measurable set $A \subset \cF$ is given as
\be \label{eq:posterior}
	\Pi(p \in A \mid X_1, \ldots, X_n) = \frac{\int_A \prod_{i=1}^n p(X_i)/p_0(X_i) d\Pi(p)}{\int \prod_{i=1}^n p(X_i)/p_0(X_i) d\Pi(p)}
\ee
by the Bayes formula.
Throughout the paper, we allow the prior $\Pi$ to depend on the sample size $n$, but often abbreviate this dependency in the notation of both prior and posterior distributions.
If clarification is necessary, the prior and posterior will be denoted $\Pi_n$ and $\Pi_n(\cdot \mid X_1, \ldots, X_n)$, respectively.
The posterior distribution is said to be consistent with respect to a (pseudo-)metric $d$ if
\bean
	\Pi\Big( d(p, p_0) > \epsilon \mid X_1, \ldots, X_n\Big) \rightarrow 0
	~ \text{in probability for every $\epsilon > 0$},
\eean
where the convergence in probability is taken with respect to the true distribution $P_0$.
If $\epsilon$ is replaced by $\epsilon_n$ for some sequence $\epsilon_n \rightarrow 0$, the convergence rate of the posterior distributions is said to be at least $\epsilon_n$.
There is a huge amount of research articles concerning asymptotic properties of the posterior distribution.
We refer to the monograph \cite{ghosal2017fundamentals} for the history and details about this topic. 

Of key importance is the Kullback--Leibler (KL) support condition developed by \cite{schwartz1965bayes}.
A fixed prior $\Pi$ is said to satisfy the KL support condition if 
\be\label{eq:KL-support}
	\Pi\Big(p: K(p_0, p) < \epsilon \Big) > 0 \quad \text{for every $\epsilon > 0$},
\ee
where $K(p_0, p) = \int \log[p_0(x)/p(x)] dP_0(x)$ is the KL divergence.
If the prior depends on the sample size, the KL condition \eqref{eq:KL-support} can be replaced by
\be\label{eq:KL-support-n}
	\liminf_{n\rightarrow \infty} \Pi_n\Big(p: K(p_0, p) < \epsilon \Big) > 0 \quad \text{for every $\epsilon > 0$}.
\ee
Conditions \eqref{eq:KL-support} and \eqref{eq:KL-support-n} became standard for proving posterior consistency.
In particular, it gives a suitable lower bound of the denominator in \eqref{eq:posterior} and it implies posterior consistency in the weak topology, that is with respect to the \Levy-Prokhorov distance, see \cite{schwartz1965bayes} and Section 6.4 of \cite{ghosal2017fundamentals}. 
A variation of the KL support condition to obtain a convergence rate is developed by \cite{ghosal2000convergence}.
It is formally expressed as
\be \label{eq:KL-rate}
	\Pi(\cK_n) \geq e^{-n\epsilon_n^2} \quad \text{for all large enough $n$,}
\ee
where
\bean 
	\cK_n = \left\{ p\in\cF : \int \log \frac{p_0}{p} dP_0 \leq \epsilon_n^2, \int \Big(\log \frac{p_0}{p} \Big)^2 dP_0 \leq \epsilon_n^2  \right\}.
\eean

In literature, studies on posterior asymptotics have focused on strong metrics such as the total variation, Hellinger and uniform metrics.
For those purposes, some non-trivial conditions such as the bounded entropy or prior summability are assumed in addition to the KL conditions, see \cite{ghosal1999posterior, walker2004new, barron1999consistency, chae2017novel} for example.
On the other hand, it is surprising that careful analysis of the convergence rates with respect to a weak metric such as the \Levy-Prokhorov and Kolmogorov has not been studied in literature, considering that the KL support condition is sufficient for the consistency in those metrics.
\cite{chae2017novel} studied the convergence rate of the posterior distribution with respect to the \Levy-Prokhorov metric, but their rate $n^{-1/4}$ have a lot of room for improvement.
Furthermore, they used the \Levy-Prokhorov rate as a tool for proving the consistency in total variation, and did not focus on the convergence rate itself.

Wasserstein metrics $W_p$, $1 \leq p < \infty$ metrize weak convergence in a bounded space, but it generates a stronger topology in general.
Indeed, neither the KL support condition \eqref{eq:KL-support} nor \eqref{eq:KL-rate} are sufficient for posterior consistency with respect to $W_p$.
If $P_0$ is a standard Cauchy density, for example, $W_p(P, P_0) = \infty$ for any $P$ and $p \geq 1$.
Therefore, for any prior except the one putting all its mass on $P_0$, the posterior distribution is inconsistent with respect to $W_p$.
This simple example shows that tails or moments of probability measures play an important role for handling $W_p$.

For a sequence $P_n$ of probability measures, it is well-known that $W_p(P_n, P) \rightarrow 0$ if and only if $P_n$ converges to $P$ weakly and $M_p(P_n) \rightarrow M_p(P)$, see \cite{villani2003topics}, p.212, where $M_p(P) = \int |x|^p dP(x)$.
Therefore, for the Wasserstein consistency to hold, the posterior moment should converge to the true moment, see Theorem \ref{thm:moment}. However, while the moment consistency of frequentist's nonparametric estimators such as the the empirical distribution is straightforward, it is non-trivial to show that the posterior moment converges to the true moment even with a very popular prior such as a Dirichlet process mixture.
This is mainly because tails of probability measures in the support of the prior should be considered simultaneously.

To prove posterior consistency, we will leverage on the KL condition. We provide two different approaches which are of independent interest; see the proof of Theorem \ref{thm:consistency}. The first one targets directly posterior moment consistency and relies on a result from \cite{walker2004new}. The second one has less stringent conditions but the proof is more complicated. Specifically, we construct uniformly consistent tests based on the empirical distribution by exploiting suitable upper bounds of Wasserstein metrics. We then show that, to achieve posterior consistency with respect to $W_p$, moments of densities must be suitably bounded. In particular, the posterior needs to put most of its mass on distributions that possess moments up to an order determined by that of the Wasserstein metric. 
In practice, the posterior moment condition can be worked out by means of exponentially small prior probability on the complement set, cf. Lemma \ref{lem:ghosal2000prior}. In Section \ref{ssec:dpm} we provide an illustration in the specific example of Dirichlet process mixture prior.


Both approaches for posterior consistency can be extended to obtain suitable convergence rates with the KL condition \eqref{eq:KL-rate}.
While the first approach gives the convergence rate for the moment, the second approach gives the rate with respect to $W_p^p$ relying on slightly stronger moment conditions, see Theorems \ref{thm:moment-rate} and \ref{thm:Wp-rate}. For convergence rates with the second approach, we rely on new upper bounds on Wasserstein metrics that can be of independent interest, cf.\ Lemma \ref{lem:W-set}. 
Interestingly, the posterior moment conditions for consistency and convergence rates are nearly necessary, that is the posterior distribution may be inconsistent or contract slowly to the true distribution when they are not satisfied. 
Finally, we obtain convergence rates for the case $p=\infty$ in Theorem \ref{thm:W_infty}, for which we need to restrict to probability measures on a bounded space. 

To the best of our knowledge, this paper is the first result on posterior asymptotics with the Wasserstein metric.
The remainder of this paper is organized as follows.
Results on posterior consistency and its convergence rate with respect to $W_p$, for $1\leq p < \infty$, are considered in Sections \ref{sec:consistency} and \ref{sec:rates}, respectively.
Posterior asymptotics with respect to $W_\infty$ is studied in Section \ref{sec:rates-infty}.
Section \ref{sec:example} considers more details with specific examples.
Some numerical results complementing our theory are provided in Section \ref{sec:sim}.
Concluding remarks and proofs are given in Sections \ref{sec:conclusion} and \ref{sec:proof}, respectively.

\subsection*{Notation}
Before proceeding, we introduce some further notation; for two real numbers $a$ and $b$, their minimum and maximum are denoted by $a \wedge b$ and $a \vee b$, respectively.
Inequality $a \lesssim b$ means that $a$ is less than a constant multiple of $b$, where the constant is universal unless specified.
Upper cases such as $P$ and $Q$ refer to probability measures corresponding to the densities denoted by lower cases and vise versa.
The empirical measure based on $X_1, \ldots, X_n$ is denoted $\bbP_n$.
For a real-valued function $f$, its expectation with respect to $P$ is denoted $Pf$.
The expectation with respect to the true distribution is often denoted $\E f(X)$.
The restriction of $P$ onto a set $A$ is denoted $P|_A$.


\section{Consistency with respect to $W_p$}
\label{sec:consistency}

Recall that $W_p(P_n, P) \rightarrow 0$ if and only if $P_n$ converges weakly to $P$ and $M_p(P_n) \rightarrow M_p(P)$, see Theorem 7.12 of \cite{villani2003topics}.
Also, the KL support condition \eqref{eq:KL-support}, or \eqref{eq:KL-support-n}, guarantees posterior consistency with respect to the \Levy-Prokhorov metric which  induces the weak convergence.
Therefore, it is natural under the KL support condition to guess that posterior consistency with respect to $W_p$ is equivalent to the consistency of the $p$th moment, that is,
\be\label{eq:moment-consistency}
	\Pi \Big( \big|M_p(P) - M_p(P_0) \big| > \epsilon \mid X_1, \ldots, X_n \Big) \rightarrow 0 \quad \text{in probability for every $\epsilon > 0$.}
\ee
If \eqref{eq:moment-consistency} holds, we say that the posterior moment of order $p$ is consistent.
For $p=1$, the moment consistency can be easily implied by $W_1$-consistency by the help of the duality theorem by \cite{kantorovich1958space}, see also \cite{dudley1989real, de1982invariance, villani2008optimal}, which asserts that
\bean
	W_1(P, Q) = \sup_{f \in \scrL} \left| \int f(x) dP(x) - \int f(x) dQ(x)\right|,
\eean
where $\scrL$ is the class of every Lipschitz function whose Lipschitz constant is bounded by 1.
Since the map $x \mapsto |x|$ belongs to $\scrL$, we have that $\left| M_1(P) - M_1(Q) \right| \leq W_1(P, Q).$
Although such an explicit bound does not exist for $p > 1$, one can show that posterior consistency with respect to the Wasserstein distance is equivalent to the moment consistency under the KL support condition.

\begin{theorem} \label{thm:moment}
For a prior $\Pi$, suppose that the KL condition \eqref{eq:KL-support-n} holds. Then, the consistency of the $p$-th moment \eqref{eq:moment-consistency} is equivalent to that
\be \label{eq:consistency-W}
	\Pi\Big( W_p(P, P_0) \geq \epsilon \mid X_1, \ldots, X_n\Big) \longrightarrow 0
	\quad \text{in probability for every $\epsilon > 0$}.
\ee
\end{theorem}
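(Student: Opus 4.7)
The plan is to prove the two implications separately, leveraging the classical characterisation that $W_p$-convergence is equivalent to weak convergence plus convergence of the $p$-th moment (see \cite{villani2003topics}, p.~212, as invoked earlier in the paper). Only the implication from moment consistency to $W_p$-consistency genuinely uses the Kullback--Leibler hypothesis; the converse is a direct consequence of the triangle inequality for $W_p$.

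For \eqref{eq:consistency-W} $\Rightarrow$ \eqref{eq:moment-consistency}, I would take the Dirac measure $\delta_0$ at the origin as a reference point. Since $W_p(P,\delta_0) = M_p(P)^{1/p}$, the triangle inequality gives
\begin{equation*}
\bigl|M_p(P)^{1/p} - M_p(P_0)^{1/p}\bigr| = \bigl|W_p(P,\delta_0) - W_p(P_0,\delta_0)\bigr| \leq W_p(P,P_0).
\end{equation*}
Posterior concentration on $\{W_p(P,P_0) < \epsilon\}$ thus bounds $M_p(P)^{1/p}$ uniformly by $M_p(P_0)^{1/p} + \epsilon$, and the continuity of $t \mapsto t^p$ on bounded sets lifts consistency of $M_p(P)^{1/p}$ to consistency of $M_p(P)$ itself.

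For the converse direction, I would first invoke Schwartz's theorem, which under \eqref{eq:KL-support-n} already yields posterior consistency in the weak (\Levy--Prokhorov) topology. The key step is then a \emph{uniform} companion to the characterisation above: for every $\eta > 0$ there exist a weak neighbourhood $U_\eta$ of $P_0$ and $\delta_\eta > 0$ such that any $P \in U_\eta$ with $|M_p(P) - M_p(P_0)| < \delta_\eta$ satisfies $W_p(P,P_0) < \eta$. This is established by contradiction: a sequence $P_n$ lying in shrinking weak neighbourhoods of $P_0$ with $|M_p(P_n) - M_p(P_0)| \to 0$ yet $W_p(P_n,P_0) \geq \eta$ would satisfy both $P_n \Rightarrow P_0$ and $M_p(P_n) \to M_p(P_0)$, forcing $W_p(P_n,P_0) \to 0$ by Villani's characterisation and contradicting the lower bound. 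Assembled via the union bound
\begin{align*}
\Pi\bigl(W_p(P,P_0) \geq \eta \mid X_1,\ldots,X_n\bigr) &\leq \Pi\bigl(P \notin U_\eta \mid X_1,\ldots,X_n\bigr) \\
&\quad + \Pi\bigl(|M_p(P) - M_p(P_0)| \geq \delta_\eta \mid X_1,\ldots,X_n\bigr),
\end{align*}
weak consistency kills the first term and the hypothesis \eqref{eq:moment-consistency} kills the second.

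The main obstacle is precisely this upgrade from Villani's pointwise equivalence to the uniform-in-$P$ form needed for the posterior union bound; it is the contradiction argument that converts a sequence-level fact into a set-level fact suitable for integration against the posterior. Minor care is needed for measures $P$ with infinite $p$-th moment in the support of $\Pi$, but on both of the events controlling the union bound the finiteness of $M_p(P_0)$ forces $M_p(P)$ to be finite as well, so such pathological $P$'s never contribute.
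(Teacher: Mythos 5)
Your proposal is correct and the second direction is essentially the paper's argument in different clothing, but your first direction is genuinely cleaner than what the paper does.

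For the implication \eqref{eq:consistency-W} $\Rightarrow$ \eqref{eq:moment-consistency}, the paper handles it symmetrically with the converse: diagonalize the ``for every $\epsilon>0$'' quantifier into a single vanishing sequence $\epsilon_{3n}$, set $\cF_n' = \{P: W_p(P,P_0)\leq \epsilon_{3n}\}$, pick a near-maximizer $P_n'$ of $|M_p(P)-M_p(P_0)|$ over $\cF_n'$, and apply the sequential Villani characterization to the non-random sequence $(P_n')$. Your argument short-circuits this entirely via the observation $W_p(P,\delta_0) = M_p(P)^{1/p}$ and the triangle inequality, which yields $\bigl|M_p(P)^{1/p} - M_p(P_0)^{1/p}\bigr| \leq W_p(P,P_0)$; consistency of $M_p(P)^{1/p}$ then lifts to $M_p(P)$ by continuity of $t\mapsto t^p$. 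Notably, the paper itself remarks just before the theorem that an explicit bound of the form $|M_p(P)-M_p(Q)| \lesssim W_p(P,Q)$ does not exist for $p>1$ and only records the Kantorovich-duality bound for $p=1$; your variant (bounding the $p$-th roots rather than the moments directly) provides exactly the kind of explicit inequality the paper sidesteps, and it does so for all $p\geq 1$. That is a real simplification.

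For the converse, your contradiction argument and the paper's argmax device are two faces of the same underlying fact: Villani's sequential characterization upgraded to a neighborhood statement. The paper diagonalizes the posterior consistency in $d_P$ and in $|M_p(P)-M_p(P_0)|$ into sequences $\epsilon_{1n},\epsilon_{2n}\to 0$, forms $\cF_n = \{d_P(P,P_0)\leq\epsilon_{1n}\}\cap\{|M_p(P)-M_p(P_0)|\leq\epsilon_{2n}\}$, extracts a worst-case sequence $P_n$ from $\cF_n$, and applies Villani to $(P_n)$. You instead fix $\eta>0$, establish by contradiction (using metrizability of the weak topology, hence a countable neighborhood base at $P_0$) that a sufficiently small joint neighborhood $\{P\in U_\eta: |M_p(P)-M_p(P_0)|<\delta_\eta\}$ sits inside $\{W_p(P,P_0)<\eta\}$, and finish with a union bound. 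Logically these are the same step --- the contradiction argument and the argmax extraction are both ways of converting the pointwise equivalence into a uniform-over-a-set one --- so neither buys more than the other; the paper's presentation avoids an explicit reference to metrizability, while yours avoids the need to discuss whether the argmax is attained. Your closing remark about measures with $M_p(P)=\infty$ is also right and is implicit in the paper's treatment: such $P$ automatically land in the complement of the moment event in the union bound.
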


We provide two different approaches for proving posterior consistency with respect to $W_p$ which are of independent interests.
The first approach relies on a result from \cite{walker2004new}; namely that if $\cC$ is a convex set of probability measures and $\inf_{P\in \cC}H(P_0,P)>0$ then $\Pi(\cC|X_1,\ldots,X_n)\to 0$ in probability, where $H(P, Q)$ denotes the Hellinger distance between $P$ and $Q$.
This approach directly uses Theorem \ref{thm:moment} by establishing the consistency of the $p$th moment \eqref{eq:moment-consistency}.
The proof based on this approach is very simple as it only needs a single application of the Cauchy--Schwarz inequality.
However, it requires the moment of order $2p$ to be bounded a posteriori.

The second approach constructs a uniformly consistent sequence of tests based on the convergence of empirical distribution.
The uniformity does not make any problem for the compact support case, \ie\ $P_0([-1,1])=1$ and $P([-1,1])=1$ for every $P$ in the support of the prior $\Pi$.
If probability measures in the support of the prior have unbounded support, however, problems may happen due to probability measures with large moments.
This problem can be avoided if the moments are suitably bounded a posteriori, as expressed through condition \eqref{eq:consistency-P} below. 
The second approach relies on a rather complicated proof, but it only needs the moment of order $p+\delta$, for some $\delta > 0$, to be bounded.

\medskip
\begin{theorem} \label{thm:consistency}
Assume that the prior $\Pi$ satisfies the KL condition \eqref{eq:KL-support-n}.
Furthermore, assume that there exist positive constants $K$ and $\delta$ such that $M_{p+\delta}(P_0) < \infty$
and
\be \label{eq:consistency-P}
	\Pi\Big( M_{p+\delta}(P) \leq K \mid X_1, \ldots, X_n \Big) \rightarrow 1 ~ \text{in probability.}
\ee
Then for every $\epsilon > 0$,
\bean
	\Pi\Big( W_p(P, P_0) > \epsilon \mid X_1, \ldots, X_n\Big) \longrightarrow 0 ~ \text{in probability}.
\eean
\end{theorem}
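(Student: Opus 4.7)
The plan is to use Schwartz's testing framework: since the KL support condition \eqref{eq:KL-support-n} provides the usual lower bound on the denominator of \eqref{eq:posterior}, it suffices to exhibit tests $\phi_n$ against the alternative $\{P : W_p(P,P_0) > \epsilon\}$ with $P_0 \phi_n \to 0$ and exponentially small type II error. By hypothesis \eqref{eq:consistency-P}, the posterior mass of $\{P : M_{p+\delta}(P) > K\}$ vanishes in probability, so the task reduces to producing uniformly consistent tests on the moment-bounded alternative
\[
\mathcal{V}_K = \bigl\{ P : W_p(P,P_0) > \epsilon,\ M_{p+\delta}(P) \leq K\bigr\}.
\]

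The core reduction is to translate the $W_p$-separation of $P \in \mathcal{V}_K$ from $P_0$ into the separation of $P$ from $P_0$ on a fixed finite family of Borel sets. Apply Lemma \ref{lem:W1-unbounded} with a cut-off $M$. Using $W_p^p(\cR_{B_m}P, \cR_{B_m}P_0) \leq 2^p$ (as both measures are supported on $(-1,1]$), the tail contribution is dominated by a constant multiple of $\sum_{m > M} 2^{mp}\bigl(P(B_m) + P_0(B_m)\bigr)$. A Markov estimate, $P(B_m) \leq 2^{-(m-1)(p+\delta)} M_{p+\delta}(P)$, yields a uniform bound $\lesssim 2^{-M\delta}\bigl(K + M_{p+\delta}(P_0)\bigr)$ on $\mathcal{V}_K$, which is smaller than $\epsilon^p/4$ once $M$ is a large fixed integer. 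For each remaining block $B_m$, $m \leq M$, Lemma \ref{lem:D-compact} applied to the rescaled measures $\cR_{B_m}P$ and $\cR_{B_m}P_0$ bounds $W_p^p(\cR_{B_m}P, \cR_{B_m}P_0)$ by a linear combination of cell differences $|P(F) - P_0(F)|$ for $F$ in a finite dyadic partition of $B_m$ of depth $L$, plus an $O(2^{-Lp})$ remainder absorbed by choosing $L$ large. It follows that there exist a finite collection $\mathcal{A}$ of Borel sets and a constant $\eta > 0$, depending only on $\epsilon, K, \delta, M_{p+\delta}(P_0)$, such that
\[
P \in \mathcal{V}_K \ \Longrightarrow\ \max_{A \in \mathcal{A}} |P(A) - P_0(A)| > \eta.
\]

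With this reduction, take
\[
\phi_n = \mathds{1}\!\Bigl\{\max_{A \in \mathcal{A}}|\bbP_n(A) - P_0(A)| > \eta/2\Bigr\}.
\]
Hoeffding's inequality and a union bound over the finite $\mathcal{A}$ give $P_0\phi_n \leq 2|\mathcal{A}|e^{-n\eta^2/2} \to 0$. For any $P \in \mathcal{V}_K$, choose a witness $A^* = A^*(P) \in \mathcal{A}$ with $|P(A^*) - P_0(A^*)| > \eta$; the triangle inequality gives $\{1-\phi_n = 1\} \subset \{|\bbP_n(A^*) - P(A^*)| > \eta/2\}$, so $P(1-\phi_n) \leq 2 e^{-n\eta^2/2}$ uniformly over $\mathcal{V}_K$. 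A standard Schwartz-type argument then yields $\Pi(\mathcal{V}_K \mid X_1,\ldots,X_n) \to 0$ in $P_0$-probability, which combined with \eqref{eq:consistency-P} closes the proof.

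The main obstacle is the uniform tail control in the second step. The strict inequality $\delta > 0$ is what converts the single moment bound $M_{p+\delta}(P) \leq K$ into a geometric decay $2^{-M\delta}$ of $\sum_{m > M} 2^{mp} P(B_m)$, and this uniformity is what allows the whole alternative $\mathcal{V}_K$ to be captured by finitely many partition probabilities. Without the extra $\delta$, individual $P$ with heavier tails could make this tail contribution arbitrarily large, destroying the reduction to a finite-dimensional testing problem; this also explains why the moment threshold in this second route is $p+\delta$ rather than $p$.
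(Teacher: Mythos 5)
Your proof is correct and follows essentially the same route as the paper: Lemmas \ref{lem:W1-unbounded} and \ref{lem:D-compact} reduce the Wasserstein separation of $\mathcal{V}_K$ from $P_0$ to finitely many cell-probability discrepancies (the paper packages this reduction as Lemma \ref{lem:W-set1}), Hoeffding tests on those cells supply the required exponential type II error, and Schwartz's argument concludes. One technical slip worth noting: Lemma \ref{lem:D-compact} bounds $W_p^p(\cR_{B_m}P,\cR_{B_m}P_0)$ by differences of the \emph{normalized} measures $|\cR_{B_m}P(F)-\cR_{B_m}P_0(F)|$, which can stay of order one even when $P(B_m)$ and $P_0(B_m)$ are both tiny, so the per-block Wasserstein distance is not directly controlled by unnormalized cell differences $|P(F)-P_0(F)|$ as you write; it is only after multiplying by the factor $P(B_m)\wedge P_0(B_m)$ coming from Lemma \ref{lem:W1-unbounded} that the bound passes to unnormalized cell differences of $P$ and $P_0$ plus $|P(B_m)-P_0(B_m)|$ (cf.\ the paper's display \eqref{eq:decompose}), but since you immediately use this product in the block decomposition your final reduction to the finite collection $\mathcal{A}$ and the test construction are unaffected.
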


It should be emphasized that assumptions in Theorem \ref{thm:consistency} are nearly necessary.
Certainly, $M_p(P_0) < \infty$ is necessary.
Since the consistency with respect to $W_p$ entails the consistency of the $p$th moment by Theorem \ref{thm:moment}, it is also necessary that
\be \label{eq:consistency-P-delta0}
	\Pi\Big(
	M_p(P) \leq K
	\mid X_1, \ldots, X_n\Big) \rightarrow 1 
	\quad \text{in probability for some constant $K$.}
\ee
On the other hand, $M_p(P_0) < \infty$ and \eqref{eq:consistency-P-delta0} are not sufficient for the posterior distribution to be consistent with respect to $W_p$, as shown in the following example.

\medskip
\begin{example}
Let $P_0 = \delta_0$, $P_n = (1-n^{-1}) \delta_0 + n^{-1} \delta_{x_n}$ and $\Pi(\{P_0\}) = \Pi(\{P_n\}) = 1/2$, where $\delta_x$ is the Dirac measure at $x$ and $x_n = n^{1/p}$.
Obviously, the KL condition \eqref{eq:KL-support-n} holds.
Furthermore, $W_p^p(P_0, P_n) = M_p(P_n) = 1 < \infty$ and $M_{p+\delta}(P_n) = n^{\delta/p} \rightarrow \infty$ for every $\delta >0$.
Since $P_0(X_1=0, \ldots, X_n=0) = 1$ and $P_n(X_1=0, \ldots, X_n=0) = (1-n^{-1})^n \rightarrow e^{-1} > 0$, the posterior distribution is inconsistent with respect to $W_p$.
Here, condition \eqref{eq:consistency-P-delta0} holds, but \eqref{eq:consistency-P} is violated for any $\delta > 0$.
\end{example}

By Theorem \ref{thm:consistency}, the proof of the Wasserstein consistency boils down to
\be \label{eq:bounded-moment}
	\Pi\Big( M_p(P) \leq K \mid X_1, \ldots, X_n \Big)
	\longrightarrow 1 \quad \text{in probability}
\ee
for a constant $K$, a condition that seems easy to prove at first sight. However, the proof is not simple even with a well-known prior which puts all of its mass on the space of light-tailed distributions, that is, distribution with large or infinite tail index.
Here, if a distribution function $F$ satisfies $1-F(x) = x^{-\alpha} L(x)$ for large enough $x$, where $L(\cdot)$ is a slowly varying function satisfying $\lim_{y \rightarrow \infty} L(xy)/L(y) = 1$ for any $x > 0$, the positive constant $\alpha$ is called the (right) tail index of $F$, see \cite{li2019posterior} for a Bayesian consistency of the tail index.
It should be noted that a light-tail, \ie\ large tail index, does not guarantee a small value of moment, which makes the proof of posterior consistency in $W_p$ difficult. This is in stark contrast to that the moment of the empirical distribution can be trivially shown to be consistent. 
In Section \ref{ssec:dpm}, we are able to work out the case of Dirichlet process mixture prior by using Lemma \ref{lem:ghosal2000prior}, that is by establishing that the prior puts exponentially small mass to probability measures $P$ with $M_{p}(P) > K$. See Theorem \ref{thm:dpm-specific}.  


\section{Convergence rates with respect to $W_p$}
\label{sec:rates}

For a given rate sequence $\epsilon_n$, suppose that $\Pi(\cK_n) \geq e^{-n\epsilon_n^2}$ for every large enough $n$.
Based on this condition, which is used to find a lower bound of the integrated likelihood, the denominator in the expression \eqref{eq:posterior}, we will extend the results of Section \ref{sec:consistency} to obtain a convergence rate.
The main task in this section is to find additional assumptions required to achieve the convergence rate $\epsilon_n$.
An extension of the first proof for Theorem \ref{thm:consistency} requires the moment of order $2p$ as follows.

\begin{theorem}\label{thm:moment-rate}
Assume that the prior $\Pi$ satisfies the KL condition \eqref{eq:KL-rate} for a sequence $\epsilon_n$ with $\epsilon_n\to 0$ and $n\epsilon_n^2 \to \infty$.
Furthermore, assume that there exists a constant $K$ such that $M_{2p}(P_0) \leq K$ and 
\bean
	\Pi\Big(P:\, M_{2p}(P)>K \mid X_1,\ldots,X_n\Big) \to 0\quad\text{in probability.}
\eean
Then
\bean
	\Pi\Big(|M_p(P)-M_p(P_0)|> K'\epsilon_n \mid X_1,\ldots,X_n \Big)\to 0\quad \text{in probability}
\eean
for some constant $K' > 0$.
\end{theorem}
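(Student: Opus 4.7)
The strategy is to extend the Cauchy--Schwarz argument of Theorem \ref{thm:consistency2} into a quantitative rate, by coupling it with the exponential testing bound available for \emph{convex} alternatives so that the rate version \eqref{eq:KL-rate} of the KL condition can be fed into Lemma \ref{lem:ghosal2000test}. As in the proof of Theorem \ref{thm:consistency2}, factoring $p - p_0 = (\sqrt{p} - \sqrt{p_0})(\sqrt{p} + \sqrt{p_0})$ and applying Cauchy--Schwarz yields
\[
|M_p(P) - M_p(P_0)| \leq \sqrt{2\bigl(M_{2p}(P) + M_{2p}(P_0)\bigr)}\, H(P, P_0),
\]
so on the set $\cA_K = \{P : M_{2p}(P) \leq K\}$, which carries posterior mass tending to one by assumption, one has $|M_p(P) - M_p(P_0)| \leq 2\sqrt{K}\, H(P, P_0)$.

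Next, I would split the bad event $\{|M_p(P) - M_p(P_0)| > K' \epsilon_n\} \cap \cA_K$ according to the sign of $M_p(P) - M_p(P_0)$, producing the two sets $\cC_n^{\pm} = \{P : \pm(M_p(P) - M_p(P_0)) > K'\epsilon_n\} \cap \cA_K$. Each $\cC_n^{\pm}$ is convex because both $M_p$ and $M_{2p}$ are linear in $P$, and by the displayed bound above every $P \in \cC_n^{\pm}$ satisfies $H(P, P_0) \geq c\,\epsilon_n$ with $c = K'/(2\sqrt{K})$. For such a convex alternative at Hellinger distance at least $c\,\epsilon_n$ from $P_0$, the classical Le Cam--Birg\'e testing lemma furnishes a single test $\phi_n^{\pm}$ whose type I and maximal type II errors are both bounded by $\exp(-\alpha\, n c^2 \epsilon_n^2)$ for some fixed absolute constant $\alpha > 0$ (depending only on the normalization of $H$). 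Choosing $K'$ large enough that $\alpha c^2 \geq 3$, the error bound becomes $\exp(-3 n\epsilon_n^2)$, so Lemma \ref{lem:ghosal2000test} applies to each $\cC_n^{\pm}$ and yields $\Pi(\cC_n^{\pm} \mid X_1, \ldots, X_n) \to 0$ in probability. Combining this with the posterior concentration on $\cA_K$ gives the conclusion.

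The only genuinely new ingredient relative to Theorem \ref{thm:consistency2} is the Le Cam--Birg\'e exponential test for convex alternatives at Hellinger rate $\epsilon_n$; this is what upgrades the qualitative Walker-type consistency argument to a rate statement. The key structural point---and the main thing to be careful about---is that splitting the bad event along the sign preserves convexity, so that a single likelihood-ratio-type test handles each entire alternative and no entropy control is needed; if one instead tried to work with $\{P : |M_p(P) - M_p(P_0)| > K'\epsilon_n\}$ directly, this convexity, and hence the single-test guarantee, would be lost.
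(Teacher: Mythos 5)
Your proof is correct and follows the same structural skeleton as the paper's: the Cauchy--Schwarz inequality translates the moment gap into a Hellinger gap on the set where $M_{2p}(P)\leq K$, and the decisive step in both arguments is splitting the bad event by sign so that each one-sided alternative $\cC_n^{\pm}$ is convex (since $M_p$ and $M_{2p}$ are linear functionals of $P$). Where you diverge is in how you convert Hellinger separation of a convex alternative into an exponentially small posterior probability. You invoke the classical Le Cam--Birg\'e test --- for a convex class at Hellinger distance at least $c\epsilon_n$ from $P_0$ there is a single test with both error probabilities bounded by $\exp(-\alpha n c^2\epsilon_n^2)$ --- and then feed it into Lemma \ref{lem:ghosal2000test}. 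The paper instead carries out Walker's martingale argument explicitly: it tracks the restricted integrated likelihood $L_n(\cC) = \int_{\cC}\prod_i (p/p_0)(X_i)\,d\Pi(p)$, uses the identity
\[
\E\bigl[L_n^{1/2}(\cC)\mid\cG_{n-1}\bigr] = L_{n-1}^{1/2}(\cC)\Bigl(1-\tfrac12 H^2(p_0,\bar p_{n-1}^{\cC})\Bigr),
\]
and exploits convexity so that the posterior predictive density $\bar p_{n-1}^{\cC_{n,j}}$ stays in $\cC_{n,j}$ and hence uniformly Hellinger-separated from $p_0$, giving $\E L_n^{1/2}(\cC_{n,j})\leq e^{-c_1 n\delta_n^2}$ directly. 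The two routes are mathematically very close --- Walker's martingale computation is one way of deriving Le Cam--Birg\'e type bounds --- but yours is cleaner in the sense that it outsources the exponential control to a named classical result, while the paper's is self-contained and makes the choice of constant $K'$ more mechanical. Both hinge on exactly the structural point you flag: convexity of each one-sided set is what lets a single test (or a single supermartingale bound) suffice without any entropy control over the alternative.
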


Note that $M_p(P)$ is a linear functional of $P$ for which the semi-parametric Bernstein--von Mises (BvM) theorem may hold, see \cite{castillo2015bernstein, rivoirard2012bernstein}.
In this case, the convergence rate of the marginal posterior distribution of $M_p(P)$ would be the parametric rate $n^{-1/2}$ even though the global posterior convergence rate $\epsilon_n$ may be slower.
However, while Theorem \ref{thm:moment-rate} is very general, the semi-parametric BvM theorem holds under rather strong conditions.
For example, the above mentioned papers consider only specific priors and relied on the assumption that $p_0$ is compactly supported and bounded away from zero.
It is sometimes possible to obtain the parametric convergence rate for the finite-dimensional parameter of interests without the semi-parametric BvM theorem.
However, the proof typically relies on the LAN (locally asymptotically normal) expansion of the log-likelihood, see \cite{bickel2012semiparametric, chae2019semiparametric}.

Next, we consider an extension of the testing approach.
To achieve the convergence rate $\epsilon_n$, we will construct a sequence of consistent test
\bean
	P_0 \phi_n \rightarrow 0
	\quad {\rm and} \quad
	\sup_{P \in \cF_n^c} P(1-\phi_n) \leq e^{-3n\epsilon_n^2},
\eean
where $\cF_n = \{P: W_p^p(P, P_0) \leq K \epsilon_n\} \cap \cF_0$.
Here, $\cF_0$ will be defined as a collection of probability measures whose tails and moments are suitably bounded.
Then, it will suffices for the desired result to show that $\Pi(\cF_0^c \mid X_1, \ldots, X_n) \rightarrow 0$ in probability.

A consistent sequence of tests will be constructed based on the convergence of the empirical distribution to the true distribution.
Note that there are well-known concentration inequalities of the form $P( W_p^p(\bbP_n, P) > \epsilon_n ) \leq \delta_n,$ where $\delta_n$ is a decaying sequence, and those inequalities might be directly used to define tests as
\bean
	\phi_n &=& \left\{\begin{array}{ll}
		1 & ~~\text{if $W_p^p(\bbP_n, P_0) > \epsilon_n$}
		\\
		0 & ~~\text{otherwise.}
	\end{array}\right.
\eean
However, such a simple approach does not give sharp convergence rates of the posterior distribution.
For example, if we apply the concentration inequality by \cite{fournier2015rate}, for any $P$ with $W_p^p(P, P_0) > 2^p \epsilon_n$ and $M_{2p+\delta}(P) < \infty$, we have
\be\begin{split} \label{eq:concentration-ineq}
	& P(1-\phi_n) = P\Big( W_p^p(\bbP_n, P_0) \leq \epsilon_n \Big)
	\leq P\Big( W_p^p(\bbP_n, P) \geq 2^{-(p-1)}W_p^p(P, P_0) - \epsilon_n \Big)
	\\
	& \leq P\Big( W_p^p(\bbP_n, P) \geq \epsilon_n \Big)
	\leq c_1 \bigg(e^{-c_2n\epsilon_n^2} + \frac{1}{n\epsilon_n^2} \frac{1}{(n\epsilon_n)^{\delta/2p}} \bigg)
\end{split}\ee
where $c_1$ and $c_2$ are constants.
Here, the constants $c_1$ and $c_2$ depends on the moments of $P$, so it is not easy to bound \eqref{eq:concentration-ineq} uniformly.
Furthermore, the second term in the right hand side of \eqref{eq:concentration-ineq} is of polynomial order in $n\epsilon_n^2$ which decays too slowly compared to $e^{-n\epsilon_n^2}$.
In turn, the use of $\phi_n$ would give a much slower convergence rate than $\epsilon_n$.

Theorem \ref{thm:Wp-rate} below is our main results concerning convergence rates of the posterior distribution.
Proof of Theorem \ref{thm:Wp-rate} relies on the set-up in \cite{fournier2015rate}.
In particular, Lemmas \ref{lem:D-compact} and \ref{lem:W1-unbounded} can be easily deduced from the results in \cite{fournier2015rate}.
We build on these two lemmas to develop some techniques whose details are different from \cite{fournier2015rate}.
As mentioned above, we need to construct a sequence of tests decaying with an exponential rate.
As far as we know, this is not possible with the proof technique used in \cite{fournier2015rate}. 
Given a bounded moment condition, we achieve this by the help of Lemma \ref{lem:W-set}.
The condition $\epsilon_n \geq \sqrt{(\log n)/n}$ in Theorem \ref{thm:Wp-rate} is assumed only for technical reason. 
Although we could not succeed to eliminate this condition, we believe the result is valid for any $\epsilon_n \downarrow 0$ with $n\epsilon_n^2 \rightarrow \infty$.

\medskip
\begin{theorem} \label{thm:Wp-rate}
Assume that the prior $\Pi$ satisfies the KL condition \eqref{eq:KL-rate} for a sequence $\epsilon_n$ with $\epsilon_n \downarrow 0$ and $\epsilon_n \geq \sqrt{(\log n)/n}$.
Furthermore, assume that there exist positive constants $K$ and $\delta$ such that $M_{2p+\delta}(P_0) < \infty$ and $\Pi( M_{2p+\delta}(P) \leq K \mid X_1, \ldots, X_n) \rightarrow 1$ in probability.
Then, for some constant $K' > 0$, 
\bean
	\Pi\Big( W_p^p(P, P_0) \geq K' \big\{\epsilon_n + I(p=1) \log \epsilon_n^{-1}\big\} \mid X_1, \ldots, X_n \Big) \longrightarrow 0
	\quad \text{in probability}.
\eean
\end{theorem}

\medskip
Assumptions in Theorems \ref{thm:Wp-rate} should be understood as sufficient conditions so that $\Pi(\cK_n) \geq e^{-n\epsilon_n^2}$ guarantees $\epsilon_n$ as the posterior convergence rate for any $\epsilon_n \gg n^{-1/2}$.
For the empirical measure to achieve the rate $n^{-1/2}$ with respect to $W_p^p$, the same moment condition $M_{2p+\delta}(P_0) < \infty$ is considered in \cite{fournier2015rate}.
They provided an example showing that this moment condition cannot be weakened in general.
As illustrated in the example at the end of this section, the posterior moment condition $\Pi( M_{2p+\delta}(P) \leq K \mid X_1, \ldots, X_n) \rightarrow 1$ cannot also be weakened.

When $p > 1$, Theorem \ref{thm:Wp-rate} gives a rate $\epsilon_n$ with respect to $W_p^p$ rather than $W_p$.
This result is more relevant to \cite{fournier2015rate} than \cite{bobkov2019one}.
In particular, condition $M_{2p+\delta}(P_0) < \infty$ is the same to Eq. (3) of \cite{fournier2015rate}, and much weaker than 
\be \label{eq:J_p}
	\int \frac{[F_0(x) (1 - F_0(x))]^{p/2}}{p_0(x)^{p-1}} dx < \infty
\ee
which is a necessary and sufficient condition in \cite{bobkov2019one} for that $\E[W_p(\bbP_n, P_0)] \asymp n^{-1/2}$.
When $p = 1$, $M_{2p+\delta}(P_0) < \infty$ is only slightly stronger than \eqref{eq:J_p}, which is reduced to $\int \sqrt{F_0(x) ( 1 - F_0(x))} dx < \infty$, where $F_0$ is the cumulative distribution function of $P_0$.
If $p > 1$, however, $M_{2p+\delta}(P_0) < \infty$ is much weaker than \eqref{eq:J_p} which may not be satisfied even when $P_0$ is compactly supported.
Note that if $P_0$ is standard normal, \eqref{eq:J_p} is satisfied if and only if $1 \leq p < 2$.
As mentioned in \cite{bobkov2019one}, the rate $\E[W_p(\bbP_n, P_0)] \asymp n^{-1/2}$ cannot be obtained under moment-type conditions considered in Theorem \ref{thm:Wp-rate}.
Therefore, we would need a stronger assumption such as \eqref{eq:J_p} to replace $W_p^p$ by $W_p$ in Theorem \ref{thm:Wp-rate}.
Since we are focusing on the moment-type condition in the present paper, we do not address more detail about condition \eqref{eq:J_p}.
Instead, we consider the metric $W_\infty$ in Section \ref{sec:rates-infty} with a stronger assumption.
Specifically, $P_0$ will be assumed to be supported on a bounded interval.
This is necessary to obtain the consistency with respect to $W_\infty$.
The result in Section \ref{sec:rates-infty} guarantees the rate $\epsilon_n$ with respect to $W_p$, not $W_p^p$, at least when probabilities are compactly supported.

Note that our approach does not guarantee the rate $n^{-1/2}$ which is minimax optimal and achieved by the empirical measure under some general conditions, \eg\ \cite{weed2019estimation, fournier2015rate, bobkov2019one}.
Our approach gives the rate $n^{-1/2}$ only if the prior puts sufficiently large mass around the KL neighborhood of $p_0$.
This is mainly because our approach relies on the general approach of \cite{ghosal2000convergence} for which the KL condition $\Pi(\cK_n) \gtrsim e^{-n\epsilon_n^2}$ plays an important role to determine the rate.
Also, note that the testing approach only gives sharp rates when the distance is compatible with the natural statistical distance of the model, the Hellinger distance in our case, see \cite{hoffmann2015adaptive} for extensive discussion on this point.
In this regards, it might not be possible to obtain sharp rates based on the testing approach.
Hence, a different approach would be necessary to achieve the rate $n^{-1/2}$, \eg\ the approach in \cite{hoffmann2015adaptive, yoo2017adaptive}.
Another possible approach would be to utilize the functional Bernstein--von Mises theorem.
Specifically, the approach given in \cite{castillo2014bernstein}, combining with the Kantorovich-Rubinstein representation, might give the rate $n^{-1/2}$ at least for $W_1$, and further limiting distribution of the posterior distribution.
Note that the above papers are limited to specific priors and probability measures on a bounded set, while the present paper focuses on the moment condition for the posterior convergence rate.
With these approaches, it would be highly interesting to investigate sufficient conditions to achieve the rate $n^{-1/2}$.

For $p=1$, an additional logarithmic term in Theorem \ref{thm:Wp-rate} can be eliminated if we assume a slightly stronger condition, which is satisfied if $p_0(x) \leq K |x|^{-(3 +\delta)}$ and $\Pi(p(x) \leq K |x|^{-(3 +\delta)} ~ \forall x \mid X_1, \ldots, X_n) \rightarrow 1$ in probability for some positive constants $K$ and $\delta$, see Theorem \ref{thm:Wp-rate-general} for details.
Since Theorem \ref{thm:Wp-rate-general} relies on some technical assumptions, we defer its statement to Section \ref{sec:proof}, and provide here a simpler statement, Theorem \ref{thm:Wp-rate}.
Proofs of these theorems are quite similar.

Finally, we note that moment conditions in Theorem \ref{thm:Wp-rate} cannot be weakened to $\delta < 0$ as shown in the following example.

\begin{example}
Let $P_0 = \delta_0$, $P_n = (1-n^{-1})\delta_0 + n^{-1} \delta_{x_n}$ and $\Pi(\{P_0\}) = \Pi(\{P_n\}) = 1/2$.
Certainly, the KL condition \eqref{eq:KL-rate} holds for any $\epsilon_n \gg n^{-1/2}$.
Note that the likelihood given $P_n$ equals to $(1-n^{-1})^n$ $P_0$-almost-surely, which converges to $e^{-1} > 0$.
If $x_n = n^{1/(2p) + \delta}$ for small enough $\delta > 0$, then $W_p^p(P_0, P_n) = n^{-(1/2 - p\delta)}$.
Therefore, the posterior distribution is consistent with respect to $W_p^p$, but the rate of convergence is strictly slower than $\sqrt{(\log n)/n}$.
In this example, note that $\int |x|^{2p} dP_n(x) = n^{2\delta p} \rightarrow \infty$, so the posterior moment condition in Theorem \ref{thm:Wp-rate} is not satisfied.
\end{example}


\section{Convergence rates with respect to $W_\infty$}
\label{sec:rates-infty}

Since $W_p(P,Q)$ monotonically increases in $p$, one may define 
  $W_\infty(P, Q) = \lim_{p\rightarrow\infty} W_p(P, Q)$
which, according to \cite{givens1984class}, corresponds to 
\bean
	W_\infty(P,Q) = \inf \Big\{\epsilon > 0: P(A) \leq Q(A^\epsilon), \; \forall A\in \cR \Big\},
\eean
where $A^\epsilon = \{x: |x-y| < \epsilon ~ \text{for some} ~ y \in A\}$ is the $\epsilon$-enlargement of $A$ and $\cR$ is the set of all Borel subsets of $\bbR$.
This representation of $W_\infty$ bears similarities with the \Levy-Prokhorov metric
\bean
	d_P(P,Q) = \inf \Big\{\epsilon > 0: P(A) \leq Q(A^\epsilon)+\epsilon, \; \forall A\in \cR \Big\}
\eean
which metrizes the weak convergence.

The metric $W_\infty$ induces a much stronger topology than the weak topology even in a bounded metric space.
In an unbounded space, if the tail index of two probability measures $P$ and $Q$ are different, then $W_\infty(P, Q)$ is typically infinity.
For example, if $P$ and $Q$ are Student's $t$-distributions with $\nu_1$ and $\nu_2$ degrees of freedom with $\nu_1 \neq \nu_2$, then $W_\infty(P, Q) = \infty$.
Therefore, it is meaningless to study asymptotics with $W_\infty$ in an unbounded space.

In this section, we assume that $P_0$ is supported in the unit interval $[0,1]$, and so are all probability measures in the support of the prior.
Our benchmarking assumption is  $\inf_{x\in [0,1]} p_0(x) \geq c_0$ for some constant $c_0 > 0$, which is a necessary and sufficient condition for that $P_0[W_\infty(\bbP_n, P_0)] \asymp n^{-1/2}$, see \cite{bobkov2019one}.

\begin{theorem} \label{thm:W_infty}
Suppose that $p_0$ is a density on $[0,1]$ and $\inf_{x\in [0,1]} p_0(x) \geq c_0$ for some constant $c_0 > 0$.
Also, assume that the prior $\Pi$ satisfies the KL condition \eqref{eq:KL-rate} for a sequence $\epsilon_n$ with $\epsilon_n \downarrow 0$ and $\epsilon_n \geq \sqrt{(\log n)/n}$ and $\Pi(P([0,1]) = 1) = 1$.
Then, for some constant $K > 0$, 
\be \label{eq:W_inf-rate}
	\Pi\bigg( W_\infty(P, P_0) > K \epsilon_n ~\Big|~ X_1, \ldots, X_n\bigg) \longrightarrow 0
	\quad \text{in probability}.
\ee
\end{theorem}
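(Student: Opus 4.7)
The plan is to apply Lemma~\ref{lem:ghosal2000test} with sieve $\cF_n = \{P \in \cF : W_\infty(P, P_0) \leq K\epsilon_n\}$ for a suitably large constant $K$. The KL condition \eqref{eq:KL-rate} is assumed outright, so the only work is to exhibit a sequence of uniformly consistent tests separating $P_0$ from $\cF_n^c$.

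The linchpin is the following comparison: whenever $P$ and $P_0$ are supported on $[0,1]$ and $p_0 \geq c_0$ there,
\begin{equation*}
  W_\infty(P, P_0) \leq c_0^{-1}\, d_K(P, P_0), \qquad d_K(P, Q) := \|F_P - F_Q\|_\infty.
\end{equation*}
Using the quantile representation $W_\infty(P, P_0) = \sup_{t \in (0,1)} |F_P^{-1}(t) - F_0^{-1}(t)|$, I would prove this pointwise: for $t \in (0,1)$ let $x = F_0^{-1}(t)$ and $y = F_P^{-1}(t)$. If $y > x$ then $F_P(x+\delta) < t$ for every $\delta \in (0, y-x)$, while $F_0(x+\delta) \geq t + c_0 \delta$ by the density lower bound, so $d_K(P, P_0) \geq c_0 \delta$; letting $\delta \uparrow y - x$ yields $y - x \leq c_0^{-1} d_K(P, P_0)$, and the case $y < x$ is symmetric.

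With this reduction in hand, the natural test is $\phi_n = \Indic\{d_K(\bbP_n, P_0) > \epsilon_n\}$. The Dvoretzky--Kiefer--Wolfowitz--Massart inequality gives $P_0 \phi_n \leq 2 e^{-2n\epsilon_n^2}$, which tends to $0$ since $n\epsilon_n^2 \geq \log n$. For $P \in \cF_n^c$ the reduction forces $d_K(P, P_0) > c_0 K \epsilon_n$, so by the triangle inequality and DKW applied under $P$,
\begin{equation*}
  P(1 - \phi_n) \leq P\bigl( d_K(\bbP_n, P) \geq (c_0 K - 1)\epsilon_n \bigr) \leq 2 \exp\bigl( -2n(c_0 K - 1)^2 \epsilon_n^2 \bigr).
\end{equation*}
Choosing $K$ with $2(c_0 K - 1)^2 > 3$ makes this bound at most $e^{-3n\epsilon_n^2}$ for $n$ large, uniformly over $\cF_n^c$ because the prior is supported on probability measures concentrated on $[0,1]$. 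Lemma~\ref{lem:ghosal2000test} then delivers \eqref{eq:W_inf-rate}.

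The conceptual content is really the $W_\infty$-to-Kolmogorov comparison; once it is in hand, the distribution-free character of DKW sidesteps any entropy or sieve construction, in sharp contrast with Theorems~\ref{thm:Wp-rate} and \ref{thm:W1-rate}. This is also the step that would fail without the lower bound on $p_0$, consistent with the necessity of that condition for the $n^{-1/2}$ rate of $\bbP_n$ in $W_\infty$ recalled just before the statement.
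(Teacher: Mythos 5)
Your proof is correct, and it takes a genuinely different route from the paper's. The paper works directly with the $\epsilon$-enlargement characterization of $W_\infty$: it partitions $[0,1]$ into $N \asymp \epsilon^{-1}$ intervals $I_j$, forms all $O(\epsilon^{-2})$ consecutive unions $I_{jk}$, shows (Claim \eqref{eq:I_jk-dev} in the proof) that $P(I_{jk}) - P_0(I_{jk}) \leq c_0\epsilon/2$ for all $j,k$ forces $W_\infty(P_0,P) \leq 2\epsilon$, and then unions Hoeffding bounds over all those intervals --- which is exactly why the assumption $\epsilon_n \geq \sqrt{(\log n)/n}$ is invoked, to absorb the polynomial factor $N(N+1)/2$ into the exponential. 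You instead prove a clean pointwise comparison $W_\infty(P,P_0) \leq c_0^{-1} d_K(P,P_0)$ via the quantile representation $W_\infty = \|F_P^{-1} - F_0^{-1}\|_{L^\infty(0,1)}$, and then a single application of DKW--Massart handles the entire test with no union bound and no combinatorial sieve. Your quantile argument is sound: for $y = F_P^{-1}(t) > x = F_0^{-1}(t)$ the strict positivity of $p_0$ gives $F_0(x+\delta) \geq t + c_0\delta$ while $F_P(x+\delta) < t$, so $d_K \geq c_0\delta$, and the requirement $x+\delta < y \leq 1$ is exactly where $\Pi(P([0,1])=1)=1$ enters; the symmetric case uses $y \geq 0$. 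The uniformity over $\cF_n^c$ is free because DKW is distribution-free. Net effect: the same conclusion under the same hypotheses, with a shorter proof, and in fact your argument would work under the weaker requirement $n\epsilon_n^2 \to \infty$ rather than $\epsilon_n \geq \sqrt{(\log n)/n}$, since Lemma~\ref{lem:ghosal2000test} only needs $P_0\phi_n \to 0$ without a rate and the DKW bound already decays faster than $e^{-3n\epsilon_n^2}$ once $K$ is chosen appropriately.
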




\section{Examples}
\label{sec:example}

In this section, we consider the posterior moment condition \eqref{eq:bounded-moment} with two examples.
In the first example, we illustrate the idea of a novel approach handling the second moment condition without full technical details.
The approach relies on a special property of gamma distributions.
The second example considers higher order moments, and concrete posterior convergence rates are derived.

Note that \eqref{eq:bounded-moment} holds trivially if the prior satisfies
\be \label{eq:prior-tail}
	\Pi\Big( p: p(x) \leq K' t_p(x) ~ \forall x \Big) = 1
\ee
for some $K'$, where $t_p$ is the density of the Student's $t$ distribution with $p$ degrees of freedom.
Such a prior can be easily constructed by conditioning well-known priors by the event in the left hand side of \eqref{eq:prior-tail}.
Although the prior probability for this event would be close to 1 with most priors and large enough $K$, this conditioning might be unnatural in practice.

\subsection{Mixture of gamma distributions}
\label{ssec:gamma}

Suppose it is required to establish weak consistency alongside a functional constraint; such as
$$\Pi\left(p: \int x^2 p(x)\, dx>K\mid X_1, \ldots, X_n\right)\to 0$$
in probability, for some finite $K>0$, with the prior $\Pi$ on density functions on $(0,\infty)$. 
This would be for establishing $W_1$ consistency. 

With the usual Kullback--Leibler support condition, we write the posterior as
$$d\Pi(p\mid X_1, \ldots, X_n)= \frac{\prod_{i=1}^n\alpha(x_i) \frac{p(x_i)}{p_0(x_i)} d\Pi(p) }
{ \int \prod_{i=1}^n \alpha(x_i)\frac{p(x_i)}{p_0(x_i)} d\Pi(p)   },$$
for some function $\alpha$. Using standard arguments,
the $P_0$-expectation of the numerator over the set $A$  is 
$$\int_A\left(\int \alpha dP\right)^n\,d\Pi(p)$$ and the reciprocal of the denominator is upper bounded, i.e.
$$e^{nd}\,e^{n\int \log(1/\alpha)\,dP_0}\quad \mbox{a.s.}$$
for all large $n$, for any $d>0$; see \cite{schwartz1965bayes} for details about this argument.
Also note 
$$\exp\left\{n\int \log(1/\alpha)\,dP_0\right\}\leq \left(\int \alpha^{-1}\,dP_0\right)^n.$$ 
Hence, if $A=\{p:\int \alpha^{-1}\,dP>K\}$ and we construct the prior $\Pi$ so that
$$\int \alpha^{-1}\, dP>K\Longrightarrow \int \alpha\,dP<\epsilon<\left(\int \alpha^{-1}\,dP_0\right)^{-1},$$
then, a.s. for all large $n$, using the Markov inequality and the Borel--Cantelli lemma,
$$\Pi(A\mid X_1, \ldots, X_n)\leq e^{nd}\,\epsilon^n  \left(\int\alpha^{-1}\,dP_0\right)^n\to 0\quad\mbox{a.s.}$$
We obtain the second moment result by taking $\alpha(x)=1/x^2$ and so we need to ensure for the prior,
for any $\epsilon>0$, there exists a $K<\infty$ such that $\int x^2\,dP>K$ implies $\int x^{-2}\,dP<\epsilon$.

Let $\kappa_0$ be the true second moment and assume we can construct the model $p(x)$ such that for any $\epsilon>0$ there exists a $K>0$ such that
$$\int x^2 p(x)\, dx>K\Longrightarrow \int x^{-2} p(x)\,d x<\epsilon<1/\kappa_0.$$
This also implies $K>\kappa_0$. Such an example arises with the gamma distribution,
so consider 
$p(x)=\Gamma(a)^{-1\,}x^{a-1}e^{-x},$ where
we have $\E\,X^2=a(a+1)$ and $\E\,X^{-2}=[(a-1)(a-2)]^{-1}$. 
Hence, $\E\,X^2>K$ implies $\E\,X^{-2}<\epsilon$ for some suitably large $K$, for $a>2$.


For a more general nonparametric  model, consider
$$p(x)=\sum_{j=1}^M w_j\Gamma(x\mid a_j,b),$$
a mixture of gamma distributions. 
We can assign priors to $M$ and $w$ but to describe the prior for $a=(a_1,\ldots,a_M)$ and $b$ there is no loss in generality in fixing them. Now
$$\E\,X^2=\sum_{j=1}^M w_j\,\frac{a_j(1+a_j)}{b^2}$$
and, if, as assumed, $a_j>2+\delta$, then
$$\,\E\,X^{-2}=\sum_{j=1}^M w_j\frac{b^2}{(a_j-1)(a_j-2)}.$$
To obtain our required condition, we take the prior so that if a single $a_j(1+a_j)/b^2>K$ then it is true for all $j$.
This ensures that if $\E\,X^2>K$ then $a_j(1+a_j)/b^2>K$ for all $j$ and then it is also true that 
$b^2/((a_j-1)(a_j-2))<\xi/K$, for all $j$, for some $\xi<\infty$, which is fixed. Indeed, $\xi=(2+\delta)(3+\delta)/(\delta(1+\delta))$, so $\E\,X^{-2}<\xi/K$.

Hence, we take the prior for $(a,b)$ as
$$\pi(a,b)=\pi(b)\left[(1-q)\prod_{j=1}^M g_{c-}(a_j|b)+q\prod_{j=1}^M g_{c+}(a_j|b)\right],$$
where $g_{c-}$ is a density on $(0,c)$ and $g_{c+}$ a density on $(c,\infty)$ for some $c$. Here,
$g_{c-}(a_j|b)$ puts all the mass on $a_j(1+a_j)<cb^2$ and $g_{c+}(a_j|b)$ puts all the mass on $a_j(1+a_j)>cb^2$.
In practice, we can take $c$ so large that the part of the prior which contributes to the posterior will only be the $g_{c-}$ component.


\subsection{Dirichlet process mixture}
\label{ssec:dpm}

Consider a Dirichlet process mixture prior
\be\label{eq:dpm}
	p(x) = \int \phi_\sigma(x-z) dG(z),
	\quad
	G \sim {\rm DP}(\alpha H),
\ee
where DP$(\alpha H)$ denotes the Dirichlet process with base measure $\alpha H$, $\phi_\sigma(x) = \sigma^{-1} \phi(x/\sigma)$ and $\phi$ is the standard normal density.
In practice, an inverse gamma prior is usually imposed for $\sigma^2$, but we consider a fixed sequence $\sigma=\sigma_n \rightarrow 0$ for technical convenience.
Note that the sequence $\sigma_n$ controls the convergence rate.
Specifically, with a suitable sequence $\sigma_n$, one can prove that $\Pi(\cK_n) \geq e^{-n\epsilon_n^2}$, see \cite{ghosal2007posterior, shen2013adaptive}.
While these papers extensively studied posterior convergence rates with respect to the Hellinger metric, posterior moments have not been studied thoroughly.
Only Section 8 of \cite{ghosal2007posterior} slightly touched the tail mass of the posterior distribution.
However, their result relies on the assumption that $P_0$ is compactly supported, and cannot be directly used to bound the posterior moments.

Note that the posterior moment condition \eqref{eq:bounded-moment} is similar to
\be \label{eq:bounded-tail}
	\Pi\Big( P(B_m) \leq K 2^{-pm} ~ \forall m \geq 0 \mid X_1, \ldots, X_n \Big) \longrightarrow 1 ~ \text{in probability},
\ee
where $B_0 = (-1,1]$ and $B_m = (-2^m, 2^m] \backslash (-2^{m-1}, 2^{m-1}]$ for $m \geq 1$.
Since
\be \label{eq:moment-prob}
	\frac{1}{2^p} \sum_{m \geq 1} 2^{pm} P(B_m) \leq M_p(P) \leq \sum_{m \geq 0} 2^{pm} P(B_m)
\ee
for any probability measure $P$ and $p \geq 1$, \eqref{eq:bounded-moment} is implied by
\bean
	\Pi\Big( P(B_m) \leq K' 2^{-(p+\delta) m} ~ \forall m \geq 0 \mid X_1, \ldots, X_n \Big) \longrightarrow 1 ~ \text{in probability}
\eean
for some positive constants $K'$ and $\delta$.

Suppose that $P_0(B_m) \lesssim 2^{-pm}$ for every $m\geq 0$.
Under the assumption that $\Pi(\cK_n) \geq e^{-n\epsilon_n^2}$, it is not difficult to show that
\bean
	\Pi\Big( |P(B_m) - P_0(B_m)| \gtrsim \epsilon_n \mid X_1, \ldots, X_n \Big) \rightarrow 0 ~ \text{in probability}
\eean
for every $m \geq 0$.
If $2^{-pm} \geq \epsilon_n$, or equivalently $m \leq p^{-1} \log_2 \epsilon_n^{-1}$, then the posterior probability
\bean
	\Pi\Big( P(B_m) \lesssim 2^{-pm} \mid X_1, \ldots, X_n \Big)
\eean
will be close to 1 for large enough $n$ with high $P_0$-probability.
More generally, one can show that
\bean
	\Pi\Big( P(B_m) \lesssim 2^{-pm} ~ \forall m \leq p^{-1} \log_2 \epsilon_n^{-1} \mid X_1, \ldots, X_n \Big) \rightarrow 1 ~~ \text{in probability}.
\eean
If $m > p^{-1} \log_2 \epsilon_n^{-1}$, however, one cannot bound $P(B_m)$ by $2^{-pm}$ because the convergence rate $\epsilon_n$ is larger than $2^{-pm}$.
In this case, the prior must play a role, that is, the prior probability that $P(B_m) \gtrsim 2^{-pm}$ should be small.
In fact, this prior probability should be exponentially small, with an order $e^{-cn\epsilon_n^2}$ for some constant $c> 0$, to guarantee that the posterior probability also decays, cf. Lemma \ref{lem:ghosal2000prior}.
To this aim we will make use of 
\bean
	G(B_m) \sim {\rm Beta} \Big(\alpha H(B_m), \alpha \big(1-H(B_m)\big) \Big)
\eean
for every $m \geq 0$, which in particular implies that the prior expectation of $G(B_m)$ equals $H(B_m)$.
If $H$ is a normal distribution (any $H$ with sub-Gaussian tail would actually work), the prior expectation of $G(B_m)$ is much smaller than $2^{-pm}$ for every large enough $m$.

\begin{theorem} \label{thm:dpm-specific}
Let $H$ be the normal distribution with mean $\mu_H$ and variance $\sigma^2_H$.
Let $\Pi$ be a Dirichlet process mixture prior \eqref{eq:dpm} with $\alpha > 1$ and $\sigma=\sigma_n = n^{-1/5}$. 
Also, suppose that $p_0$ is twice continuously differentiable with a sub-Gaussian tail, and satisfies $\int (p_0''/p_0)^2 + (p_0'/p_0)^4  dP_0< \infty$.
Then, for any $p < 4$, there exists a constant $K$ such that
\bean
	\Pi \Big( M_p(P) \leq K \mid X_1, \ldots, X_n\Big) \rightarrow 1 \quad \text{in probability.}
\eean
\end{theorem}

If the prior and $P_0$ satisfy conditions in Theorem \ref{thm:dpm-specific}, the posterior distribution is consistent with the rate $n^{-2/5}$ with respect to $W_p^p$ for $p < 2$, up to a logarithmic factor.
Once $P_0$ possesses a smoother density, it is possible to prove the consistency of higher order moments, see Lemma \ref{lem:dpm} for more details.

If we impose a prior on $\sigma^2$, it can be deduced from the proof that the assertion of Lemma \ref{lem:dpm} is still valid provided that $\sigma^2$ is bounded a posteriori, that is, $\Pi ( \sigma^2 > K \mid X_1, \ldots, X_n) \rightarrow 0$ for some constant $K > 0$.
Under mild assumptions, the posterior distribution of $\sigma^2$ will be concentrated around 0 unless $P_0$ itself is a location mixture of normal distributions.
If $P_0$ is a location mixture of normal distributions, the posterior probability that $\sigma^2 > \sigma^2_0 + \epsilon$ vanishes, where $\sigma_0^2$ is the true parameter.

\section{Numerical study}
\label{sec:sim}

Although theoretical results given in previous sections provide reasonable sufficient conditions for the Wasserstein consistency, those conditions are not easy to verify in practice.
With a DP mixture prior, for example, the rate $\epsilon_n$ determined by $\Pi(\cK_n) \geq e^{-n\epsilon_n^2}$ plays an important role for the consistency with respect to $W_p$.
However, it is very difficult to find exact rate $\epsilon_n$ satisfying $\Pi(\cK_n) \asymp e^{-n\epsilon_n^2}$.
Note also that if $P_0$ has an unbounded support, the posterior distribution is typically inconsistent with respect to $W_\infty$.
Since $W_p \uparrow W_\infty$ as $p \uparrow \infty$, the posterior distribution will be consistent with respect to $W_p$ only for small values of $p$, where the threshold value depends on $\epsilon_n$.
Perhaps the most interesting cases would be $p=1$ or $p=2$, so in this section, we empirically show that the posterior distribution tends to be consistent with respect to $W_1$ and $W_2$ with popularly used priors.

We consider DP mixtures of Gaussian priors described in Section \ref{ssec:dpm}.
Instead of a decaying sequence $\sigma_n$, we put an inverse gamma prior on $\sigma^2$ as usual in practice.
Specifically, we used $H = N(\mu_H, \sigma_H)$, $\sigma^2 \sim \Gamma^{-1}(\beta,\lambda)$ and $\alpha \sim \Gamma(\beta_\alpha, \lambda_\alpha)$ with $\sigma_H = \beta = \lambda = \beta_\alpha = \lambda_\alpha = 1$ and $\mu_H=0$, where $\beta$ and $\lambda$ denotes the shape and rate parameters of the gamma distribution.
In addition to the location mixture, we also consider a location-scale mixture
\bean
	p(x) = \int \phi_\sigma(x-z) dG(z, \sigma), \quad G \sim {\rm DP}(\alpha H),
\eean
where $H$ is the normal-inverse gamma distribution.
In this case, we used $H = \text{N-}\Gamma^{-1}(\mu_H, \sigma_H, \beta, \lambda)$ and $\alpha \sim \Gamma(\beta_\alpha, \lambda_\alpha)$ with $\sigma_H = \beta = \lambda = \beta_\alpha = \lambda_\alpha = 1$ and $\mu_H=0$, where $(X, Y) \sim \text{N-}\Gamma^{-1}(\mu, \sigma, \beta, \lambda)$ means that $X\mid Y \sim N(\mu, Y/\sigma)$ and $Y \sim \Gamma^{-1}(\beta,\lambda)$.
Note that an inverse gamma distribution has a tail of polynomial order, so with a location-scale mixture, the prior probability that $P(B_m) \geq 2^{-pm}$ may not be too small.

There are several computational algorithms sampling from a posterior distribution based on a Dirichlet process mixture prior, see \cite{neal2000markov, kalli2011slice} and references therein.
Unfortunately, given a posterior sample $P$, it is very difficult to compute the Wasserstein distance $W_p(P, P_0)$, see Theorem 3 of \cite{kuhn2019wasserstein}.
Instead of directly calculating $W_p(P, P_0)$, we can easily generate a Markov chain sample $Y_1, \ldots, Y_N$ from the posterior predictive distribution $\int p(x) d\Pi(p\mid X_1, \ldots, X_n)$.
Then, the corresponding empirical distribution $\widetilde\bbP_N$ can be used as a proxy of the posterior predictive distribution.
Note that the empirical distribution from an ergodic Markov chain, as well as the one from an iid sample, contracts to the stationary distribution with respect to the Wasserstein metrics, see \cite{fournier2015rate}.
However, it is still not easy to compute $W_p(\widetilde\bbP_N, P_0)$.
To evaluate $W_p(\widetilde\bbP_N, P_0)$, we first approximate $P_0$ by a discrete measure $\bbQ_M$ and find $W_p(\widetilde \bbP_N, \bbQ_M)$.
If $M$ is a multiple of $N$, one can easily find exact value of $W_p(\widetilde \bbP_N, \bbQ_M)$ based on the following lemma taken from \cite{bobkov2019one}.

\medskip
\begin{lemma}
For given two collections of real numbers $x_1 \leq \cdots \leq x_N$ and $y_1 \leq \cdots \leq y_N$, let $\bbP$ and $\bbQ$ be the corresponding empirical measures.
Then, for any $p \geq 1$, 
\bean
	W_p^p(\bbP, \bbQ) = \frac{1}{N} \sum_{k=1}^N |x_k - y_k|^p.
\eean
\end{lemma}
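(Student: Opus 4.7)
The plan is to establish matching upper and lower bounds. The upper bound is delivered by an explicit coupling: take $\pi = \frac{1}{N}\sum_{k=1}^N \delta_{(x_k, y_k)}$, which pairs the $k$-th order statistic of $\bbP$ with the $k$-th order statistic of $\bbQ$. Its marginals are $\bbP$ and $\bbQ$, so $\pi \in \scrC(\bbP, \bbQ)$, and integrating $|x-y|^p$ against $\pi$ yields exactly $\frac{1}{N}\sum_k |x_k - y_k|^p$. This bounds $W_p^p(\bbP, \bbQ)$ from above.

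For the lower bound, I would first reduce the minimization over $\scrC(\bbP, \bbQ)$ to a finite combinatorial optimization. Since both measures are uniform on $N$ atoms, every coupling $\pi$ is supported on the $N \times N$ grid $\{(x_i, y_j)\}$, and the matrix of masses $(N\pi_{ij})$ is doubly stochastic. By the Birkhoff--von Neumann theorem this matrix is a convex combination of permutation matrices, so $\int |x-y|^p \, d\pi$ is a corresponding convex combination of quantities $\frac{1}{N}\sum_k |x_k - y_{\sigma(k)}|^p$ indexed by permutations $\sigma$ of $\{1,\ldots,N\}$. Hence the infimum equals $\min_\sigma \frac{1}{N}\sum_k |x_k - y_{\sigma(k)}|^p$, and it suffices to show that the identity permutation attains this minimum.

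This is handled by a standard swap (rearrangement) argument: if $\sigma \neq \mathrm{id}$, pick indices $i < j$ with $\sigma(i) > \sigma(j)$ and exchange the two images. The monotonicity of the cost under this swap reduces to the inequality, for $a \leq b$ and $c \leq d$,
\[
|a-c|^p + |b-d|^p \leq |a-d|^p + |b-c|^p.
\]
Setting $f(t) = |t|^p$, $u = a-d$, $s = b-a \geq 0$ and $t = d-c \geq 0$, this is exactly $f(u) + f(u+s+t) \geq f(u+s) + f(u+t)$, the supermodularity inequality for a convex $f$. It follows from the monotonicity of difference quotients of convex functions, since the slope of $f$ on $[u+s, u+s+t]$ dominates the slope on $[u, u+t]$. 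Iterating swaps (as in bubble sort) transforms any $\sigma$ into the identity without increasing the cost, yielding the lower bound. The main obstacle is the convexity-based swap inequality, but it is routine bookkeeping with $f(t)=|t|^p$, so I expect no further technical subtleties.
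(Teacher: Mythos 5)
Your proof is correct and complete. Note, however, that the paper does not actually prove this lemma: it is stated as a citation to Bobkov and Ledoux (2019) and appears only in the numerical-study section, with no corresponding entry in Section 9. So there is no internal proof to compare against. The route taken in that reference (and the standard one for $\bbR$) is via the quantile representation $W_p^p(P,Q) = \int_0^1 |F^{-1}(t) - G^{-1}(t)|^p\, dt$, which for two uniform-weight empirical measures immediately collapses to $\frac{1}{N}\sum_k |x_{(k)} - y_{(k)}|^p$ since both quantile functions are step functions constant on the dyadic intervals $[(k-1)/N, k/N)$. Your argument instead goes through Birkhoff--von Neumann plus a convexity-based swap inequality, which is a self-contained, purely combinatorial alternative; it avoids invoking the one-dimensional optimal-transport formula as a black box, at the cost of a longer argument. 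Both the coupling you exhibit for the upper bound and the supermodularity inequality $f(u) + f(u+s+t) \geq f(u+s) + f(u+t)$ for convex $f$ and $s,t \geq 0$ are correct, and the bubble-sort reduction to the identity permutation is standard. One minor point worth a sentence in a polished write-up: when some of the $x_i$ (or $y_j$) coincide, the correspondence between couplings and doubly stochastic matrices is not a bijection, but every coupling still induces at least one doubly stochastic matrix with the same cost (by splitting the mass at a tied atom arbitrarily among the tied indices), so the reduction to permutations and hence the lower bound goes through unchanged.
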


To approximate $P_0$ by $\bbQ_M$, assume for a moment that $P_0$ is symmetric about the origin.
For an even integer $M$, let $x_k = q(1/2 + k/M)$ for $k=0, \ldots, M/2-1$ and $\bbQ_M$ be the probability measure such that $\bbQ_M(\{x_0\}) = 2/M$, $\bbQ_M(\{x_k\}) = 1/M$ and $\bbQ_M(\{-x_k\}) = 1/M$ for $k \geq 1$,  where $q:(0,1) \rightarrow \bbR$ is the quantile function of $P_0$.
Then,
\be\label{eq:discrete-approx}
	W_p^p(P_0, \bbQ_M) \leq 
	2\int_{x_{M/2-1}}^{\infty} |x-x_{M/2-1}|^p dP_0(x) + \frac{2}{M/2-1}\sum_{k=1}^{M/2-1} |x_k - x_{k-1}|^p.
\ee
Since $W_p(P_0, \bbQ_M) \rightarrow 0$ as $M \rightarrow \infty$, one can approximate $W_p(\widetilde\bbP_N, P_0)$ by $W_p(\widetilde\bbP_N, \bbQ_M)$.
For a non-symmetric $P_0$, a similar approximation $\bbQ_M$ can be obtained after replacing the origin by the median.
For various true distributions--standard uniform, standard normal, Laplace, Student's $t$ with 20, 10, 5 degrees of freedoms--the approximation error, the upper bound of $W_p(P_0, \bbQ_M)$, is depicted in Figure \ref{fig:discrete-approx}.
When $p=1$ and $p=2$, the approximation of $P_0$ by $\bbQ_M$ is quite accurate for all cases.
On the other hand, for $p=4$ and $p=8$, the approximation is not reliable unless the support of the true distribution is bounded.

\begin{figure}[t]
\centering
\subfigure[Uniform]{\scalebox{0.2}{\includegraphics{./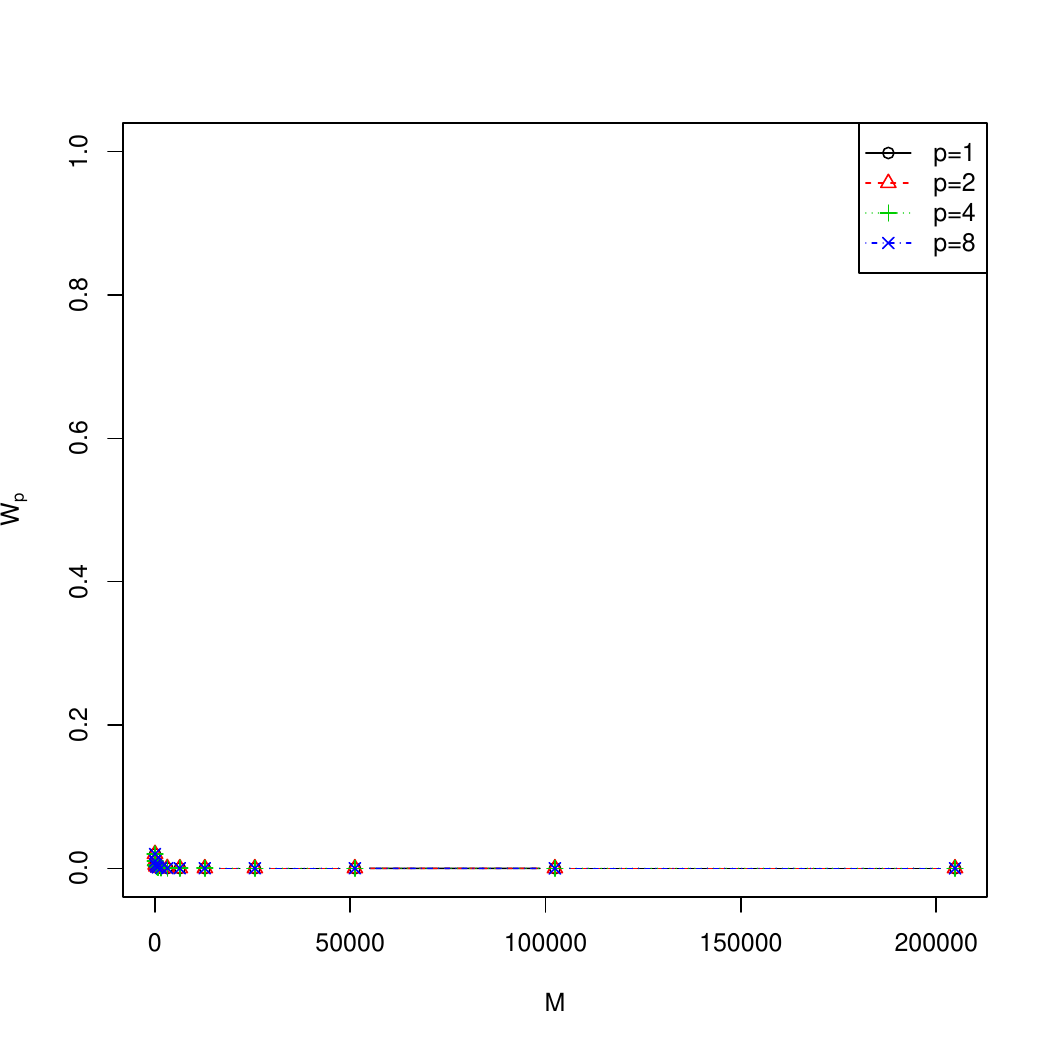}}}
\subfigure[Standard normal]{\scalebox{0.2}{\includegraphics{./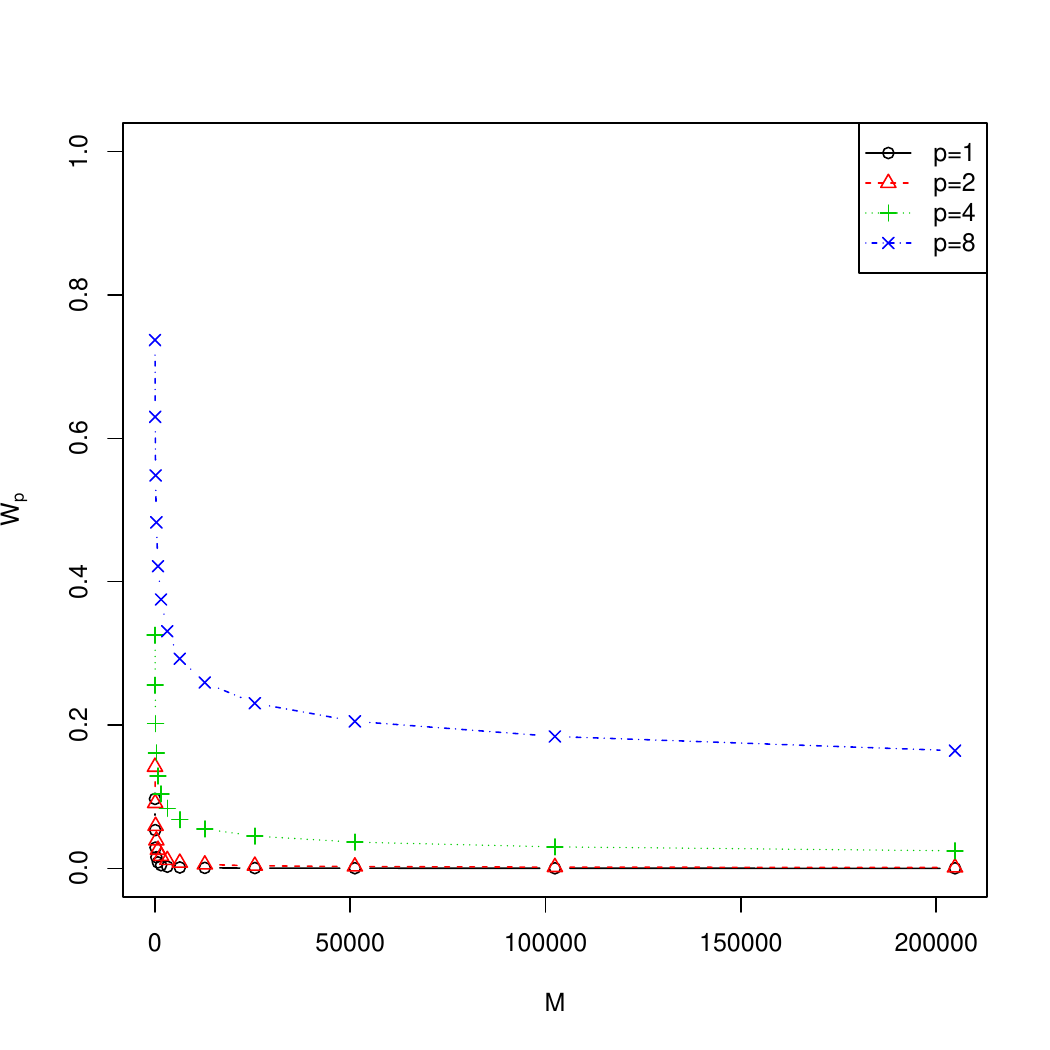}}}
\subfigure[Laplace]{\scalebox{0.2}{\includegraphics{./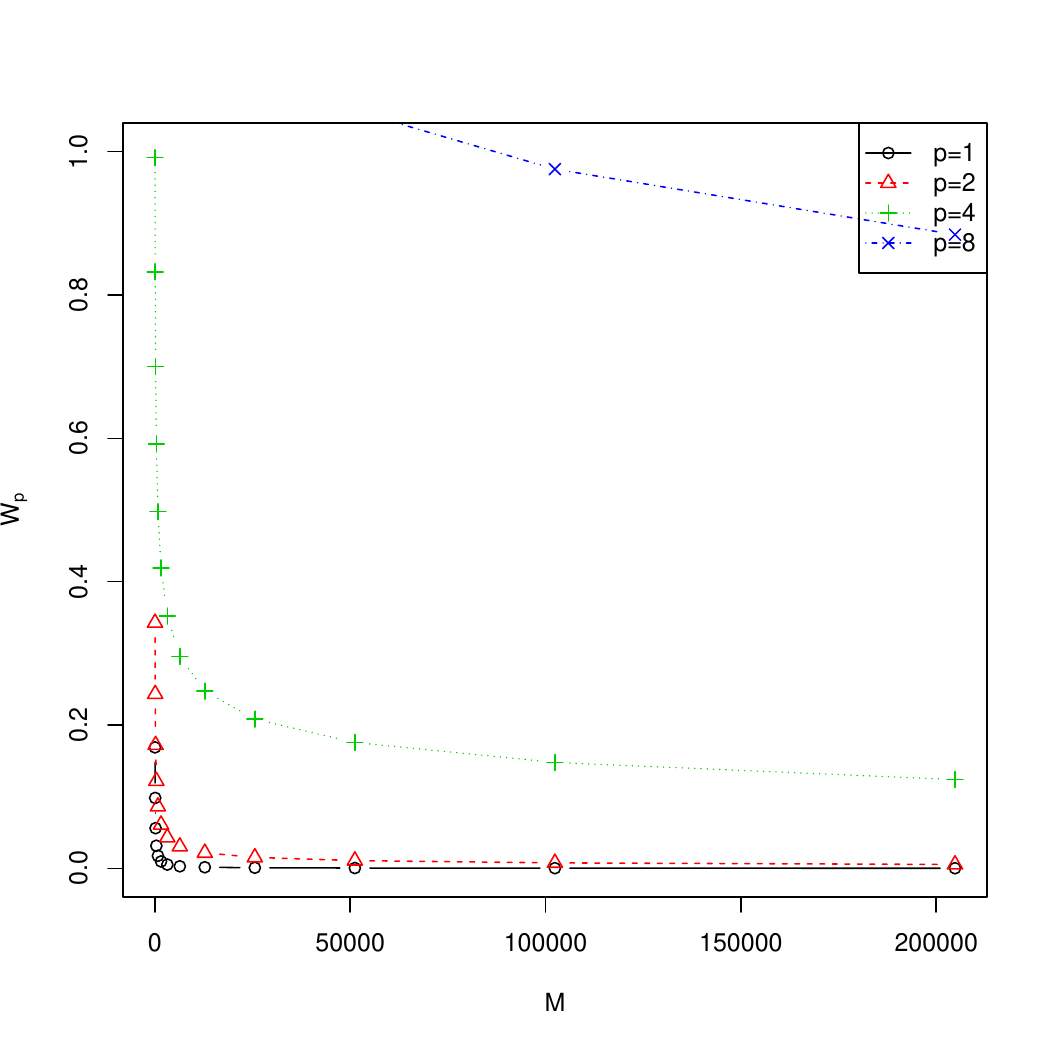}}}
\subfigure[Student's $t$ with 20 df]{\scalebox{0.2}{\includegraphics{./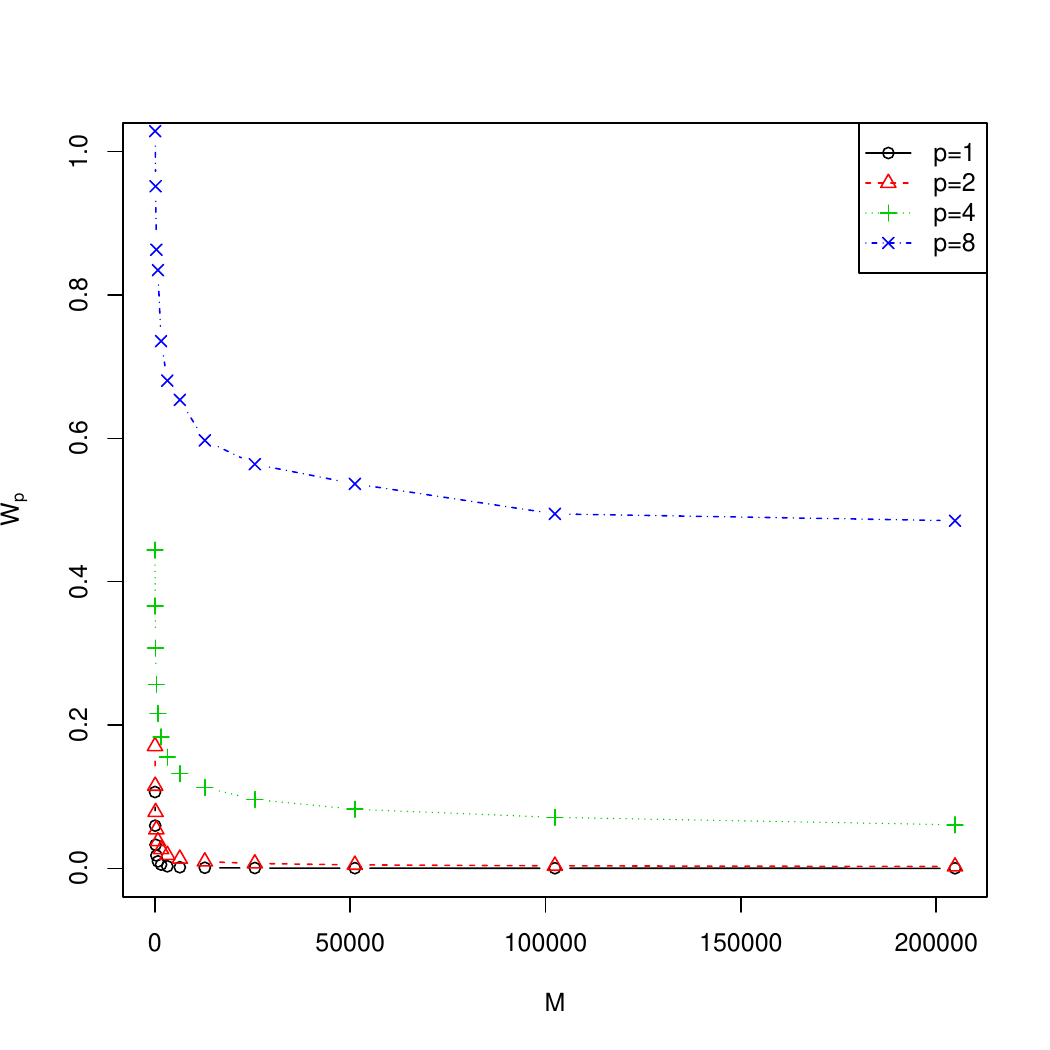}}}
\subfigure[Student's $t$ with 10 df]{\scalebox{0.2}{\includegraphics{./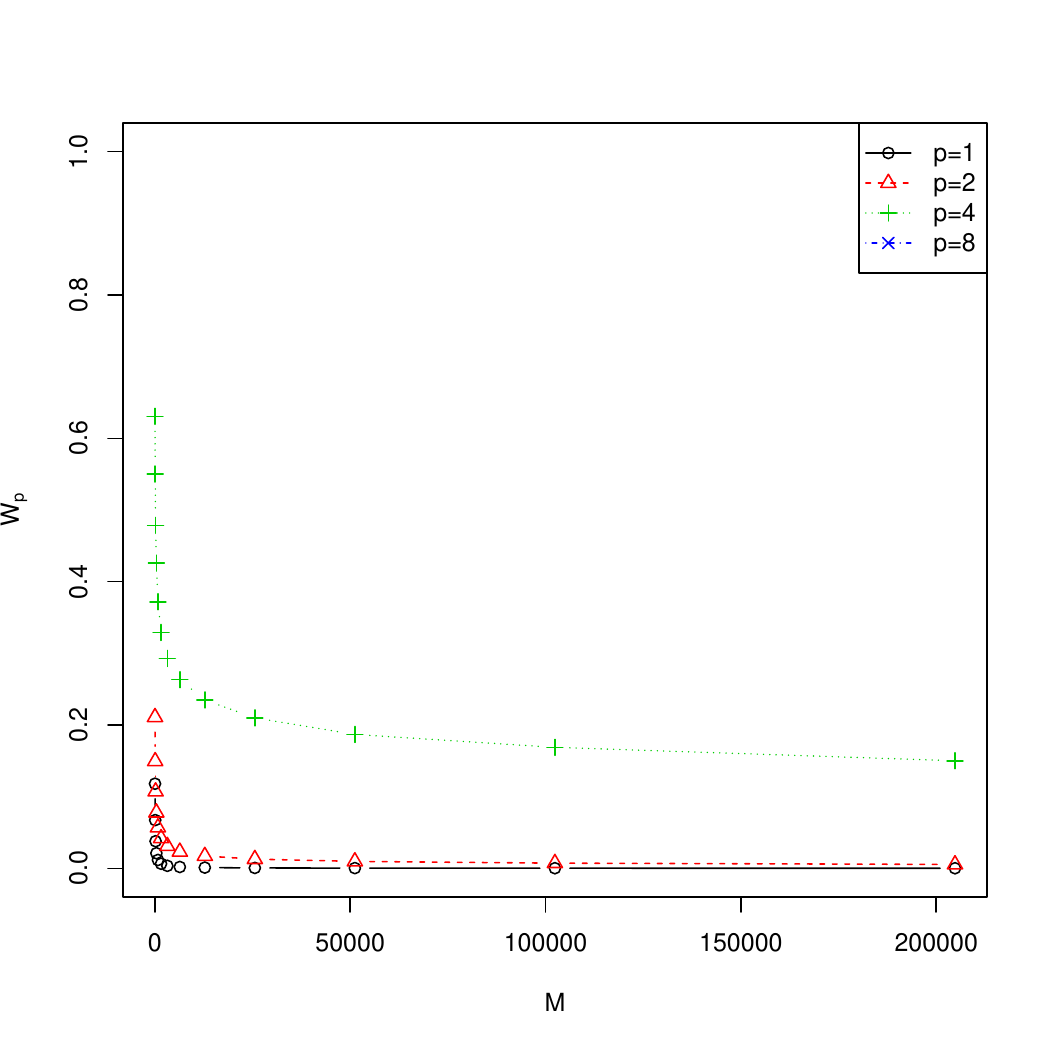}}}
\subfigure[Student's $t$ with 5 df]{\scalebox{0.2}{\includegraphics{./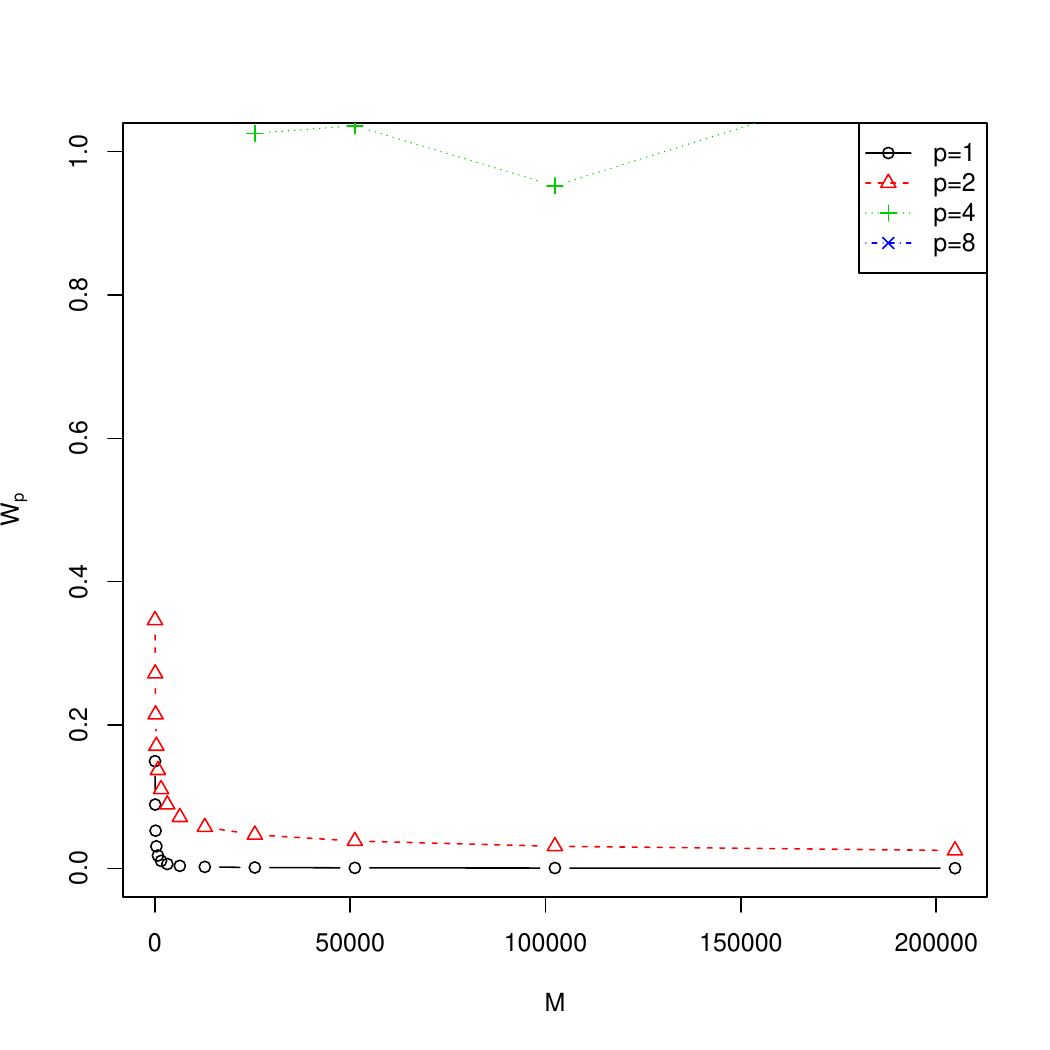}}}
\caption{The upper bound of $W_p(P_0, \bbQ_M)$ for various true distributions.}
\label{fig:discrete-approx}
\end{figure}

With the above six true distributions, we generated $n=50, 100, 200, \ldots, 6400$ samples and obtained $N=10^4$ MCMC samples from the posterior predictive distributions after $1000$ burn-in periods.
Then, we evaluated the Wasserstein distance $W_p(\widetilde \bbP_N, \bbQ_M)$ between the empirical distribution $\widetilde\bbP_N$ of MCMC sample and the discrete approximation $\bbQ_M$ of $P_0$ with $M=2\times 10^5$.
We considered $p=1$ and $p=2$ only because because the approximation by $\bbQ_M$ is not reliable for large $p$.
We repeated the above procedure for 100 times and the median among 100 repetitions are depicted in Figures \ref{fig:W-dpm1} and \ref{fig:W-dpm2}.
As can be seen, the posterior predictive distributions become closer to the approximation $\bbQ_M$ of the true distribution as the sample size increases.
Interestingly, it seems that the location-scale mixture prior also gives consistent posterior distributions with respect to both $W_1$ and $W_2$ for all cases.
Figure \ref{fig:W-dpm3} shows similar results with a location mixture prior with different hyperparameter $H = N(0, 10^4)$.
Note that a normal distribution with large variance is a natural choice for $H$ in practice.
The results in Figure \ref{fig:W-dpm3} shows that the posterior distribution seems to be consistent with respect to $W_2$, but more samples are needed to dominate prior probabilities on the tail.
This is because some posterior predictive samples might be very large when the number of observation is small, and $W_2(\widetilde\bbP_N, \bbQ_M)$ is more sensitive to these large samples than $W_1(\widetilde\bbP_N, \bbQ_M)$.

\begin{figure}[t]
\centering
\subfigure[$P_0=$Uniform, location]{\scalebox{0.2}{\includegraphics{./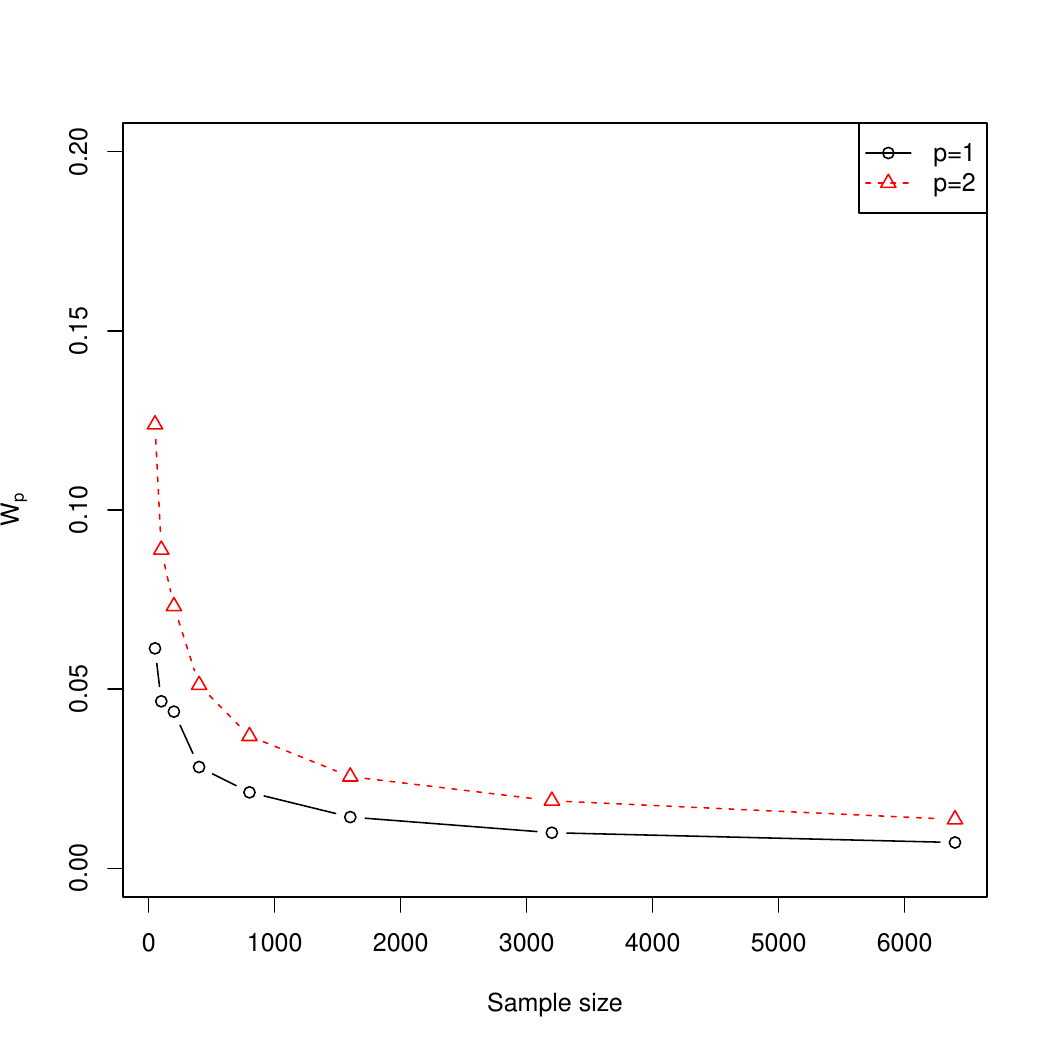}}}
\subfigure[$P_0=$Normal, location]{\scalebox{0.2}{\includegraphics{./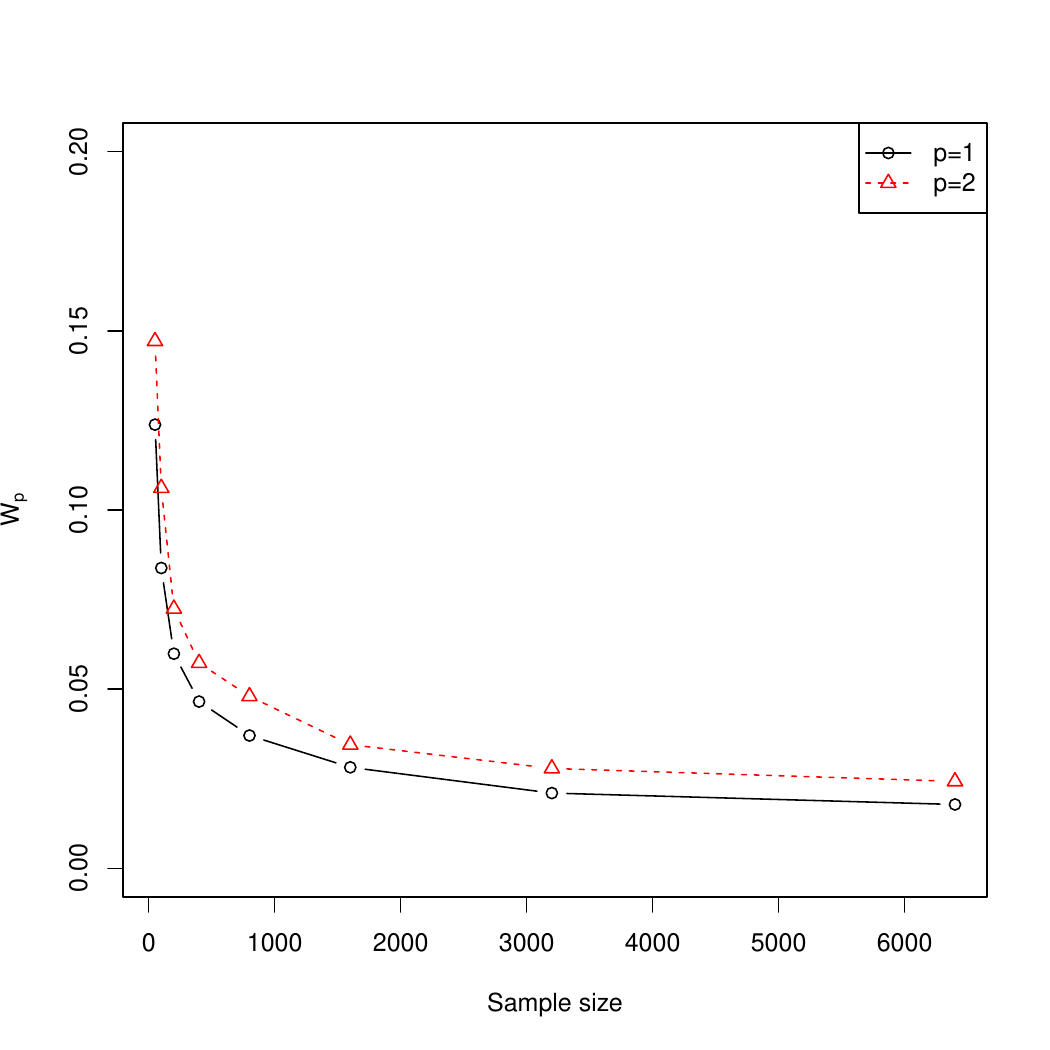}}}
\subfigure[$P_0=$Laplace, location]{\scalebox{0.2}{\includegraphics{./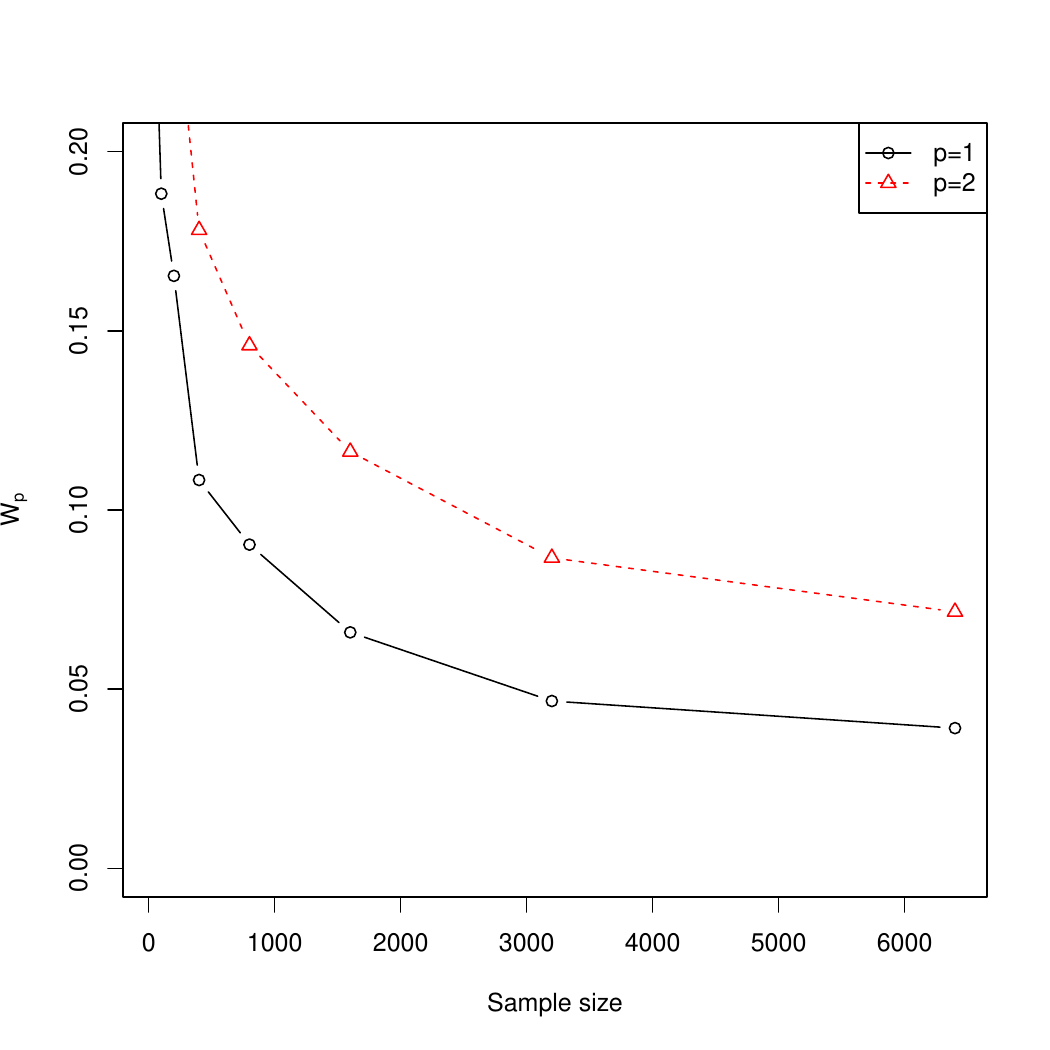}}}
\subfigure[Location-scale mixture]{\scalebox{0.2}{\includegraphics{./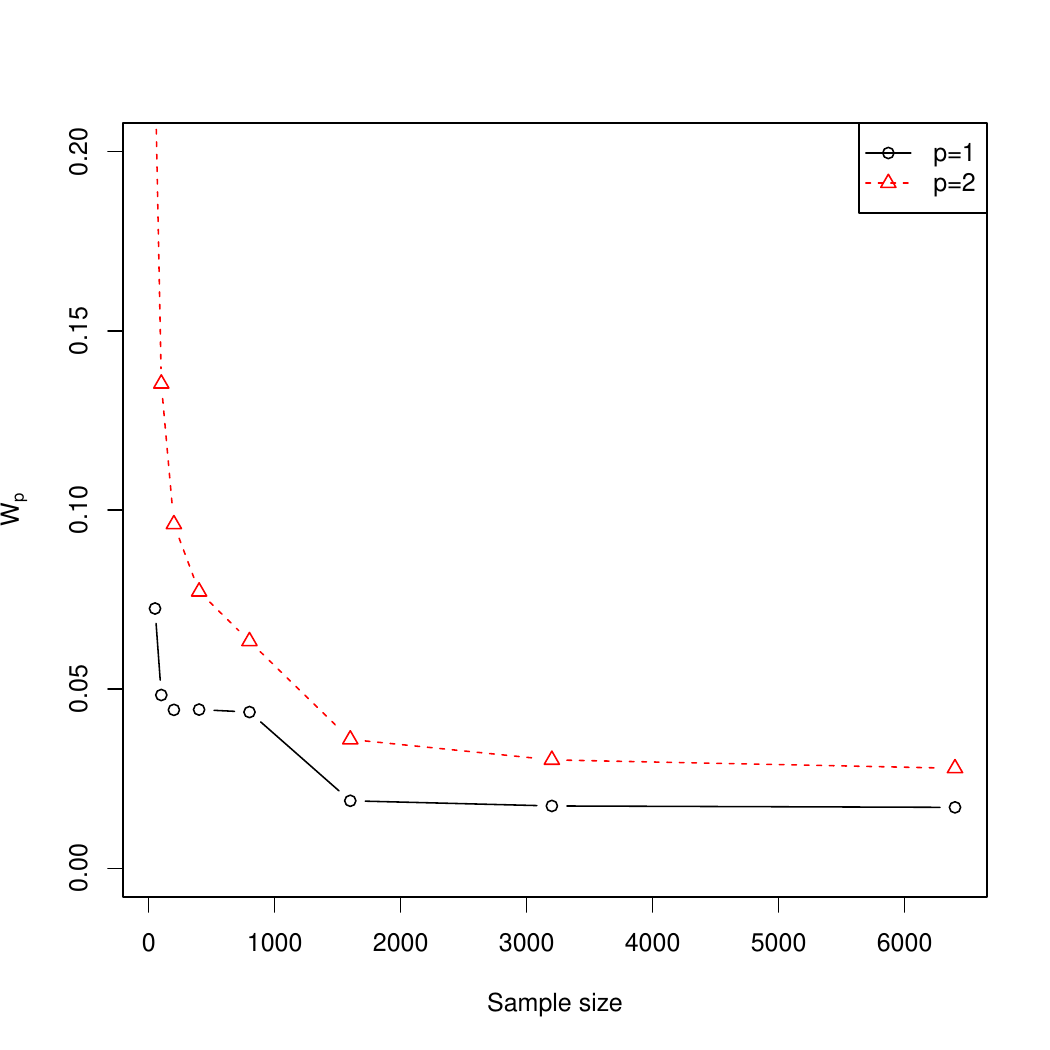}}}
\subfigure[Location-scale mixture]{\scalebox{0.2}{\includegraphics{./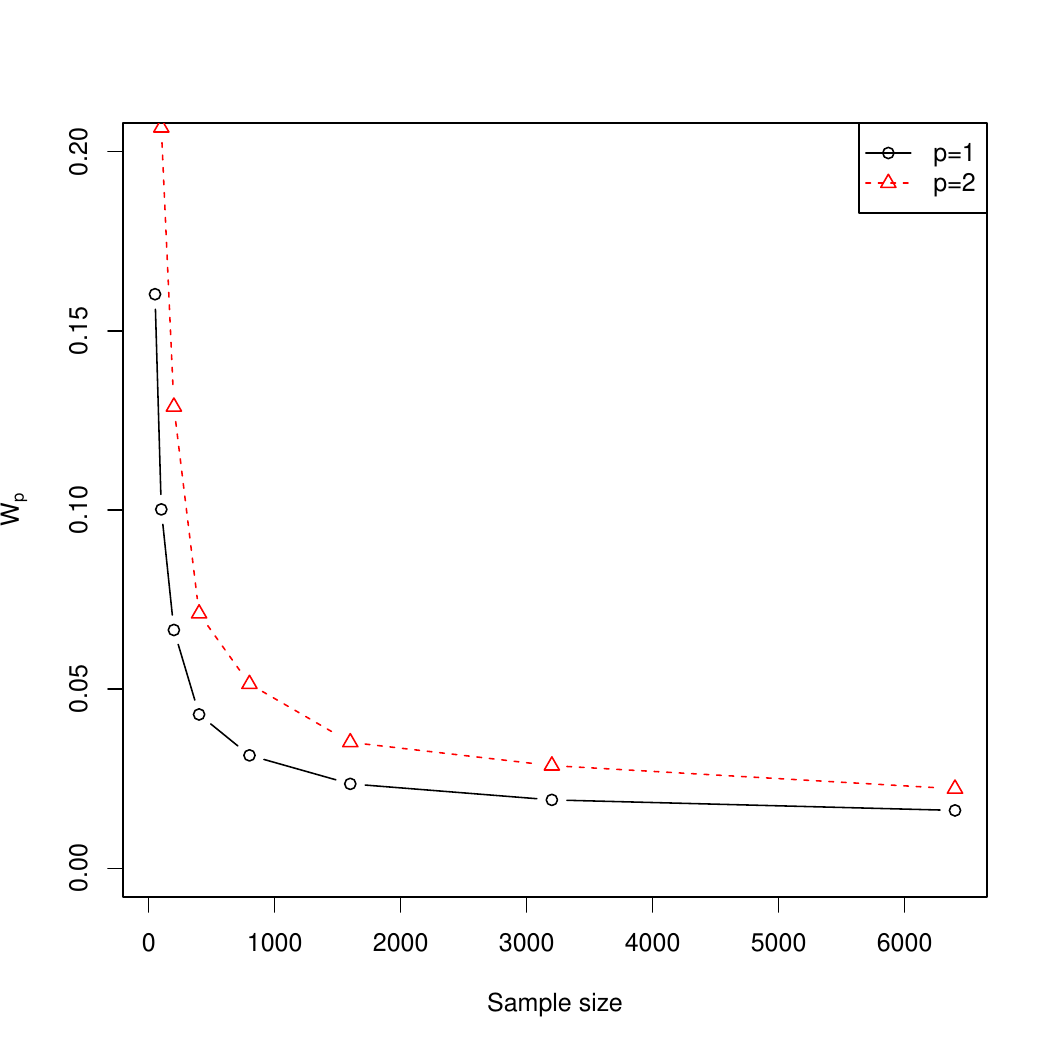}}}
\subfigure[Location-scale mixture]{\scalebox{0.2}{\includegraphics{./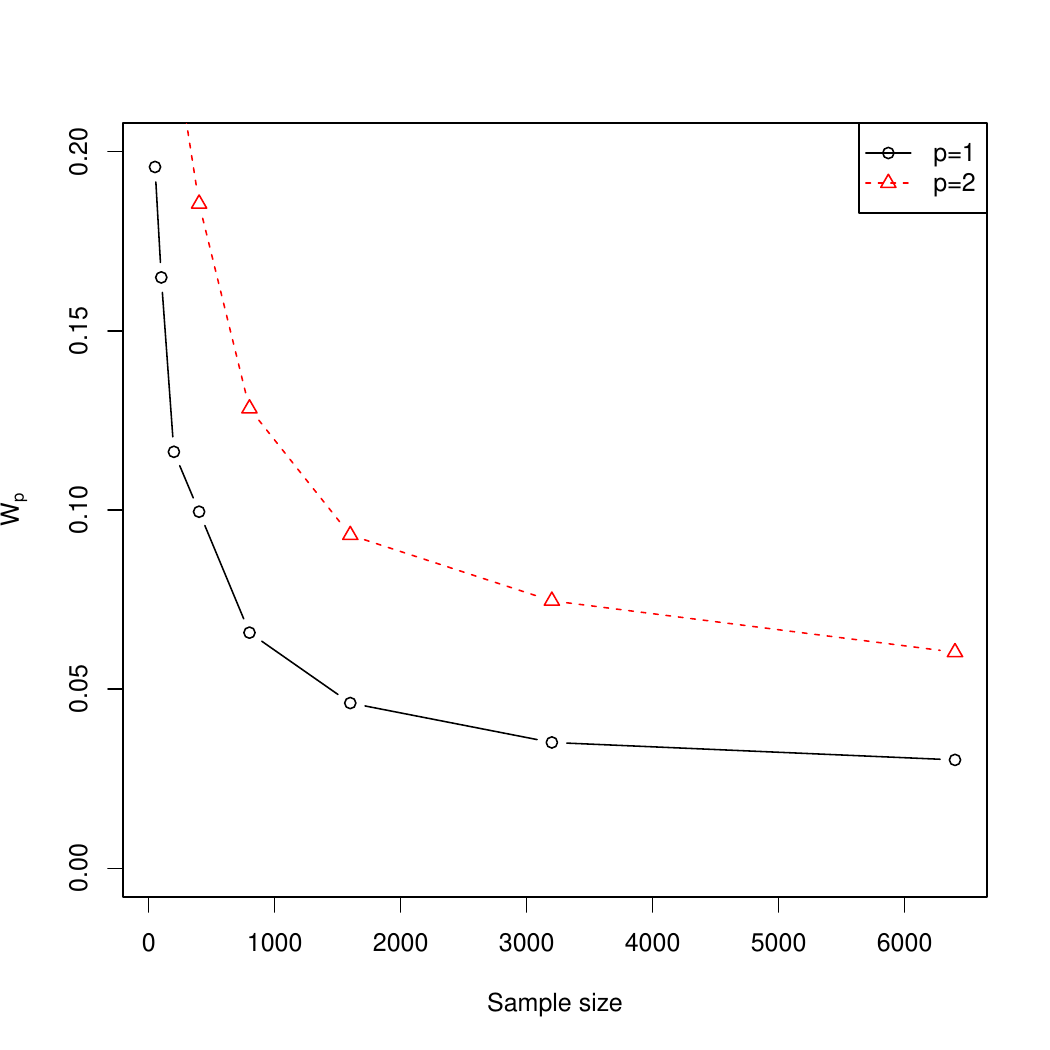}}}
\caption{Wasserstein distances between the true distribution--uniform (left), normal (middle), and Laplace (right)--and the posterior predictive distributions based on location (upper) and location-scale mixtures (lower) of Gaussians.}
\label{fig:W-dpm1}
\end{figure}

\begin{figure}[t]
\centering
\subfigure[$P_0= t$ with 20 df, location]{\scalebox{0.2}{\includegraphics{./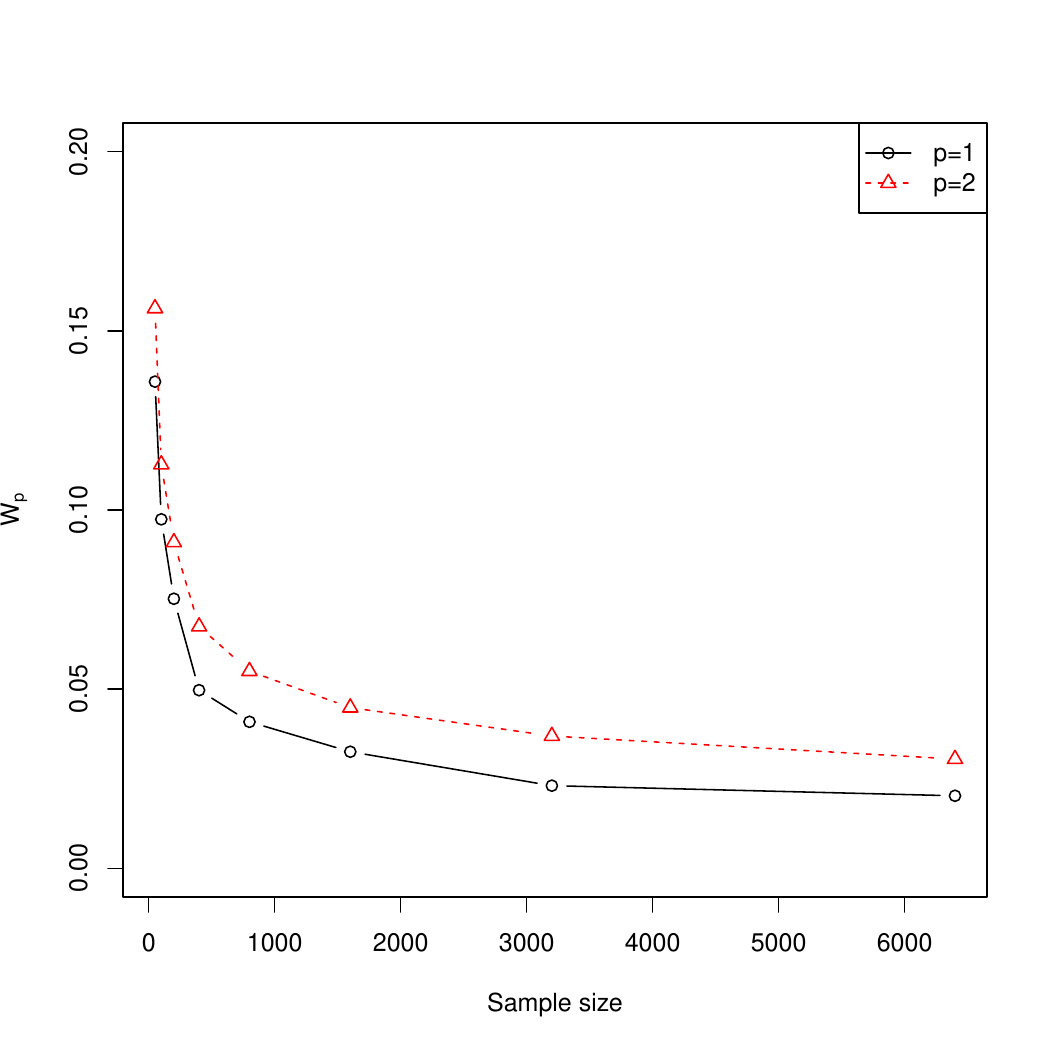}}}
\subfigure[$P_0= t$ with 10 df, location]{\scalebox{0.2}{\includegraphics{./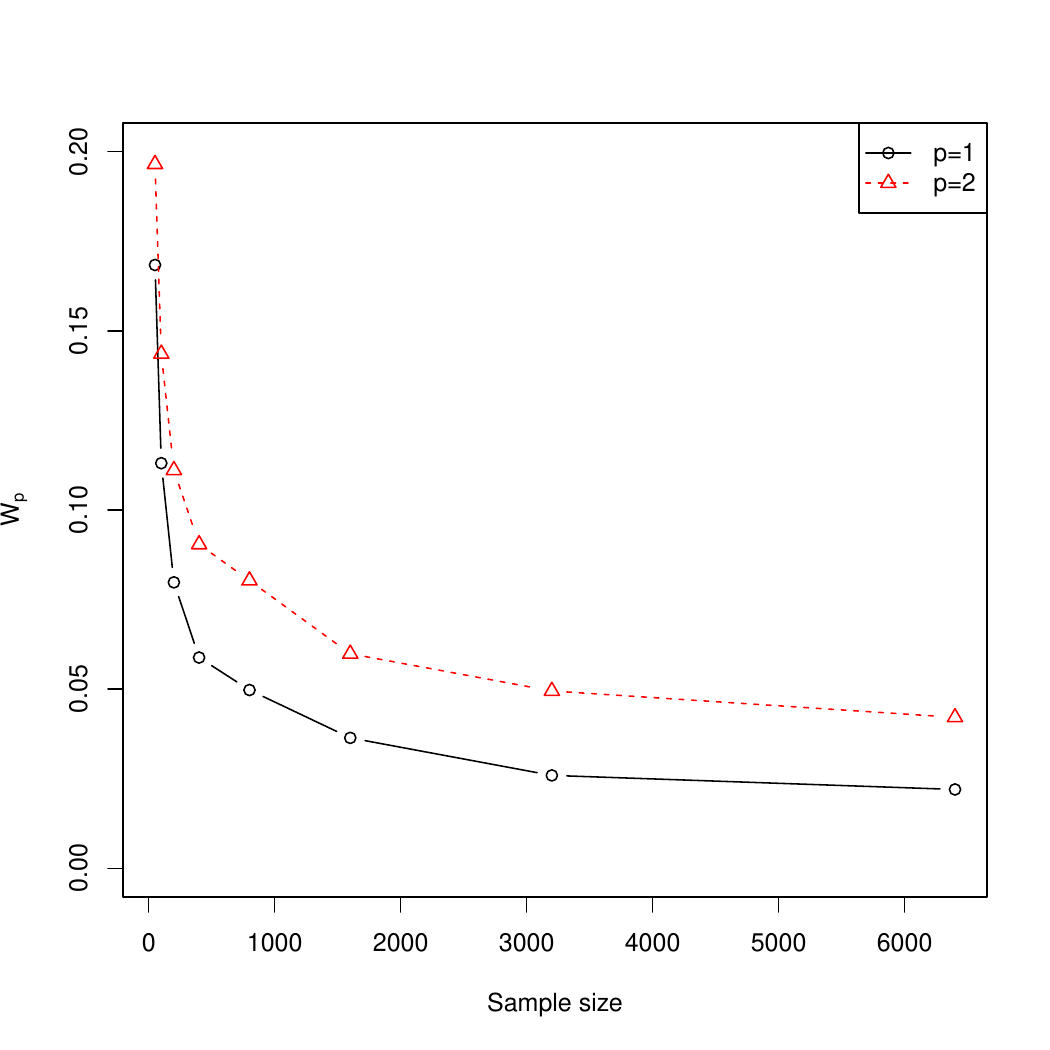}}}
\subfigure[$P_0= t$ with 5 df, location]{\scalebox{0.2}{\includegraphics{./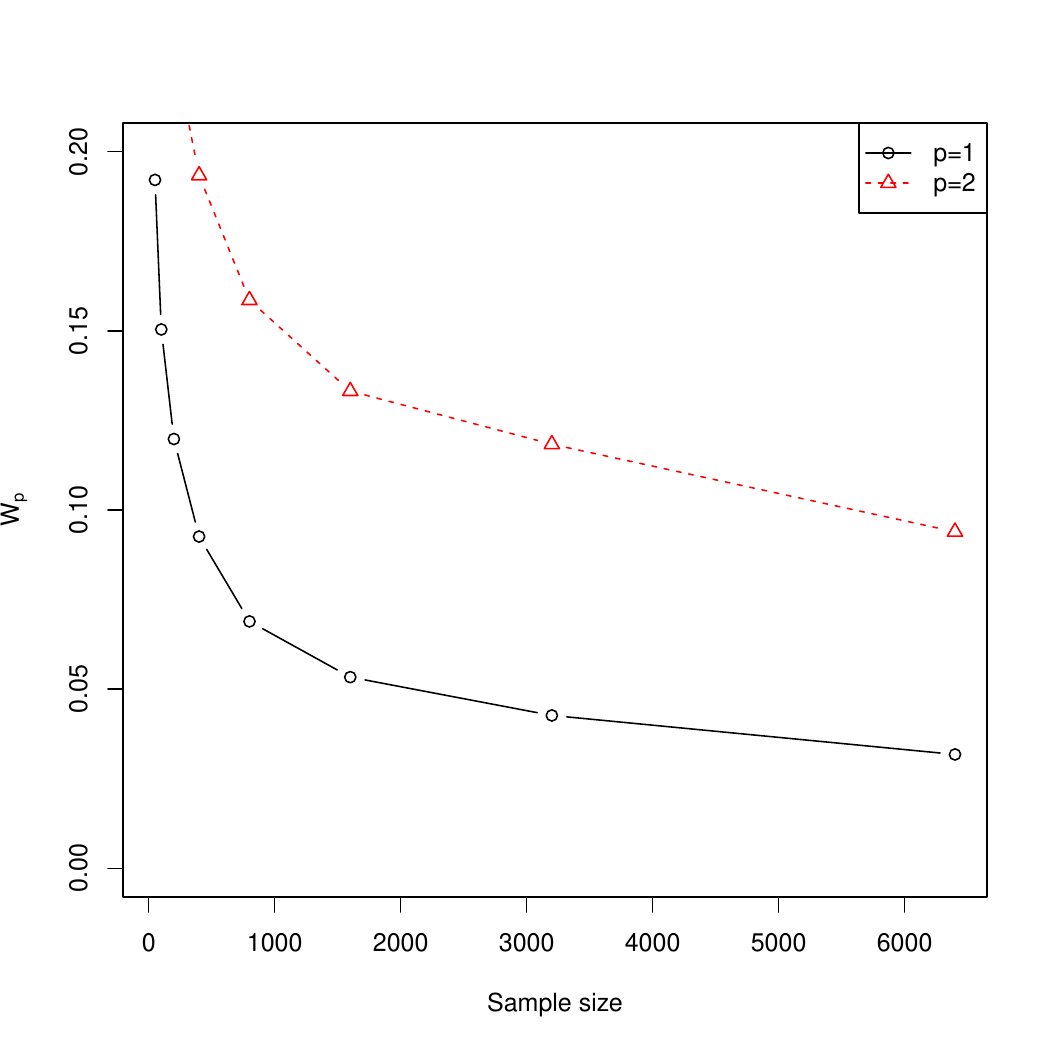}}}
\subfigure[Location-scale mixture]{\scalebox{0.2}{\includegraphics{./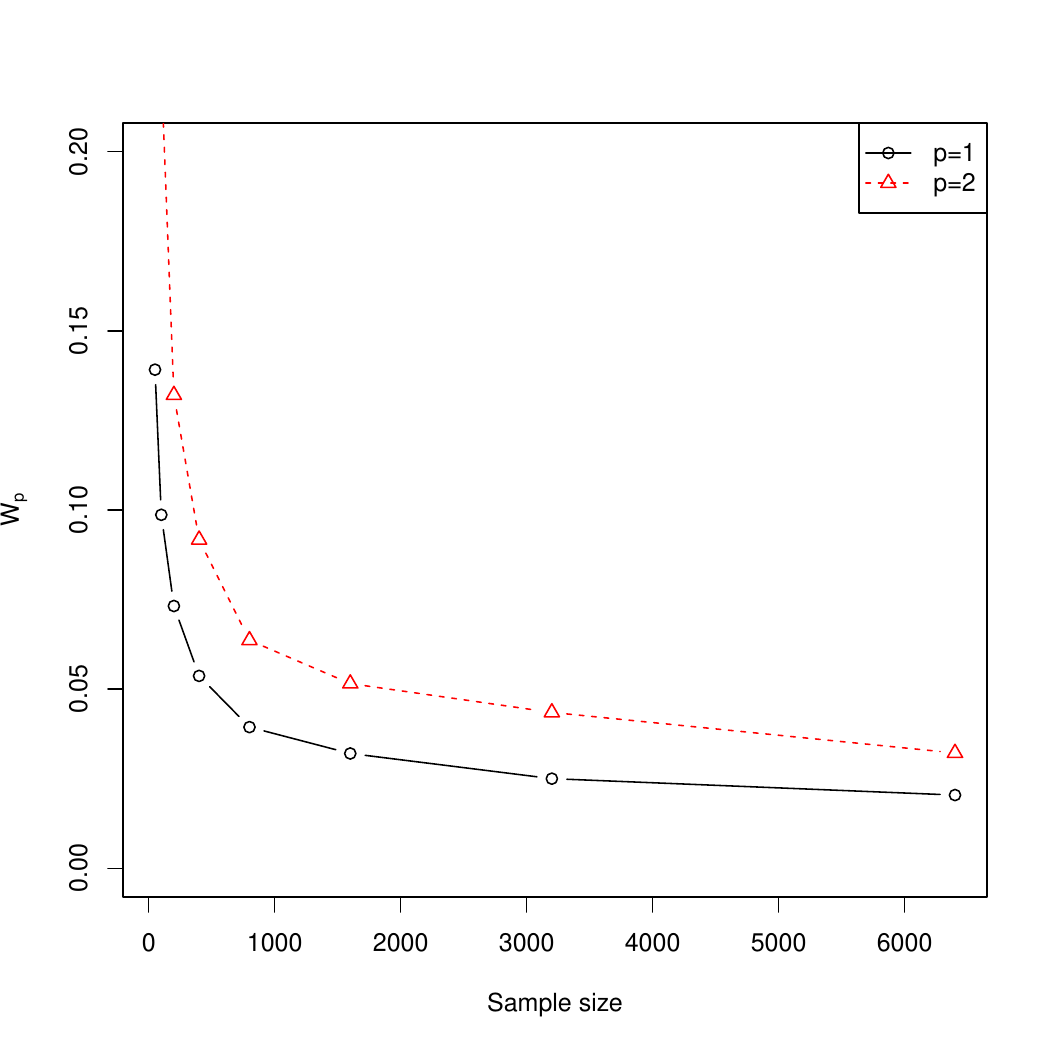}}}
\subfigure[Location-scale mixture]{\scalebox{0.2}{\includegraphics{./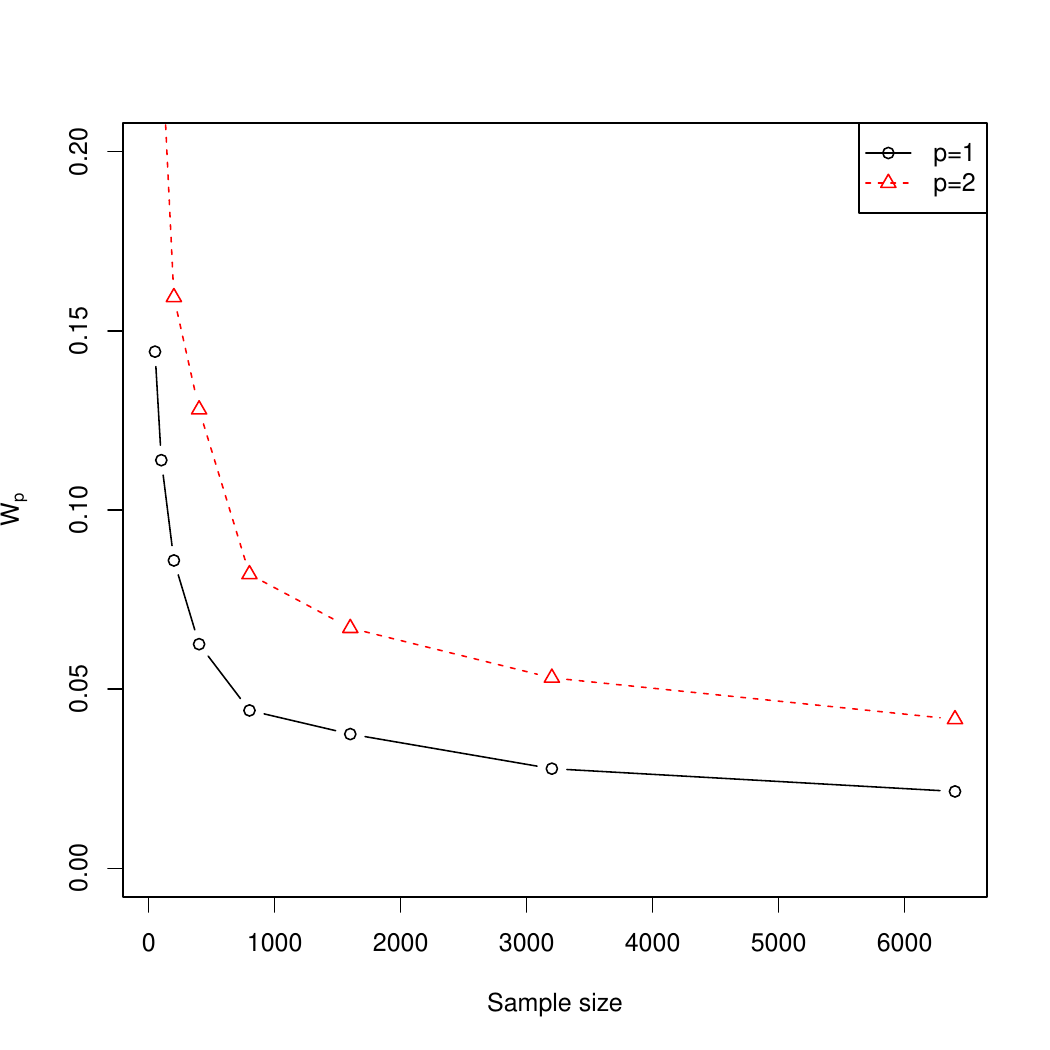}}}
\subfigure[Location-scale mixture]{\scalebox{0.2}{\includegraphics{./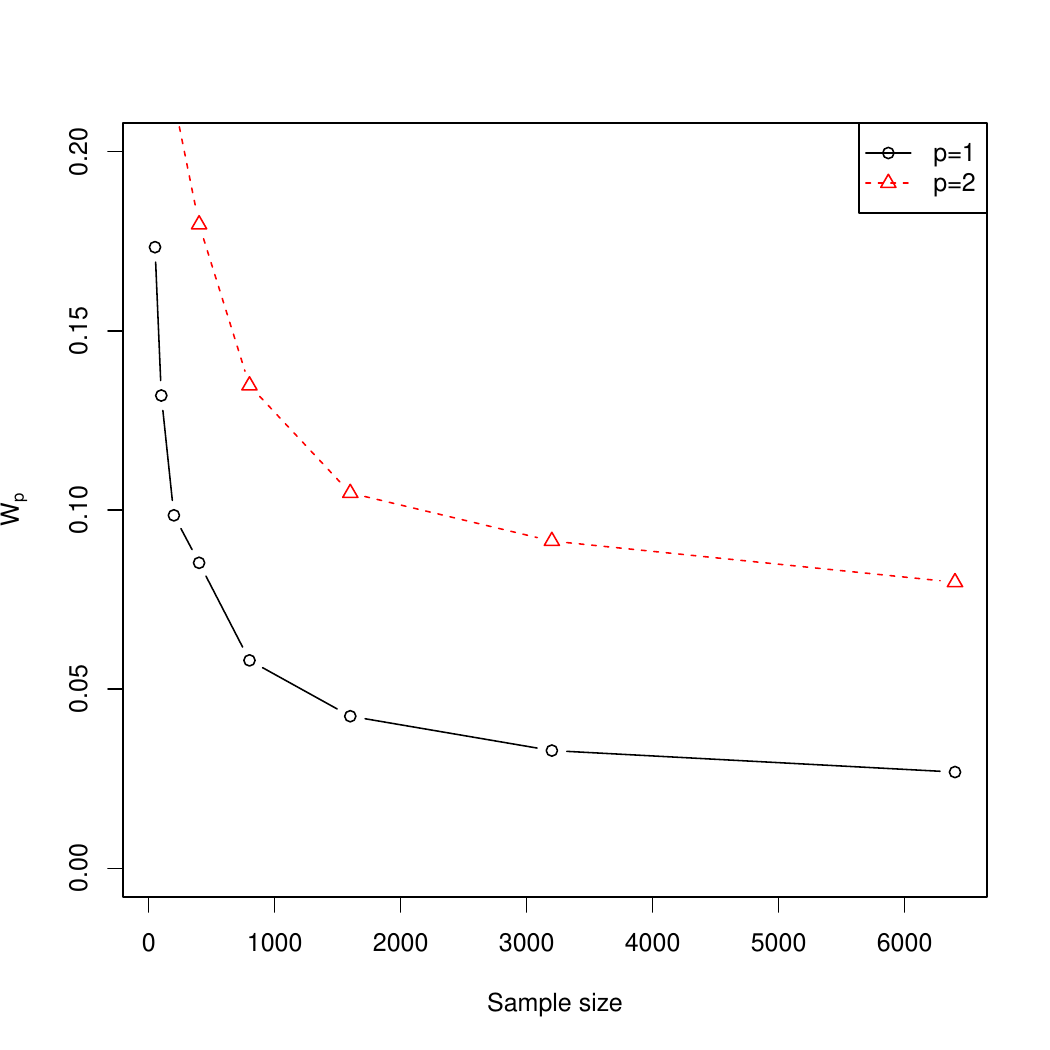}}}
\caption{Wasserstein distances between the true distribution--Student's $t$ distribution with 20 (left), 10 (middle), and 5 (right) degrees of freedom--and the posterior predictive distributions based on location (upper) and location-scale mixtures (lower) of Gaussians.}
\label{fig:W-dpm2}
\end{figure}

\begin{figure}[t]
\centering
\subfigure[$P_0=$ Uniform]{\scalebox{0.2}{\includegraphics{./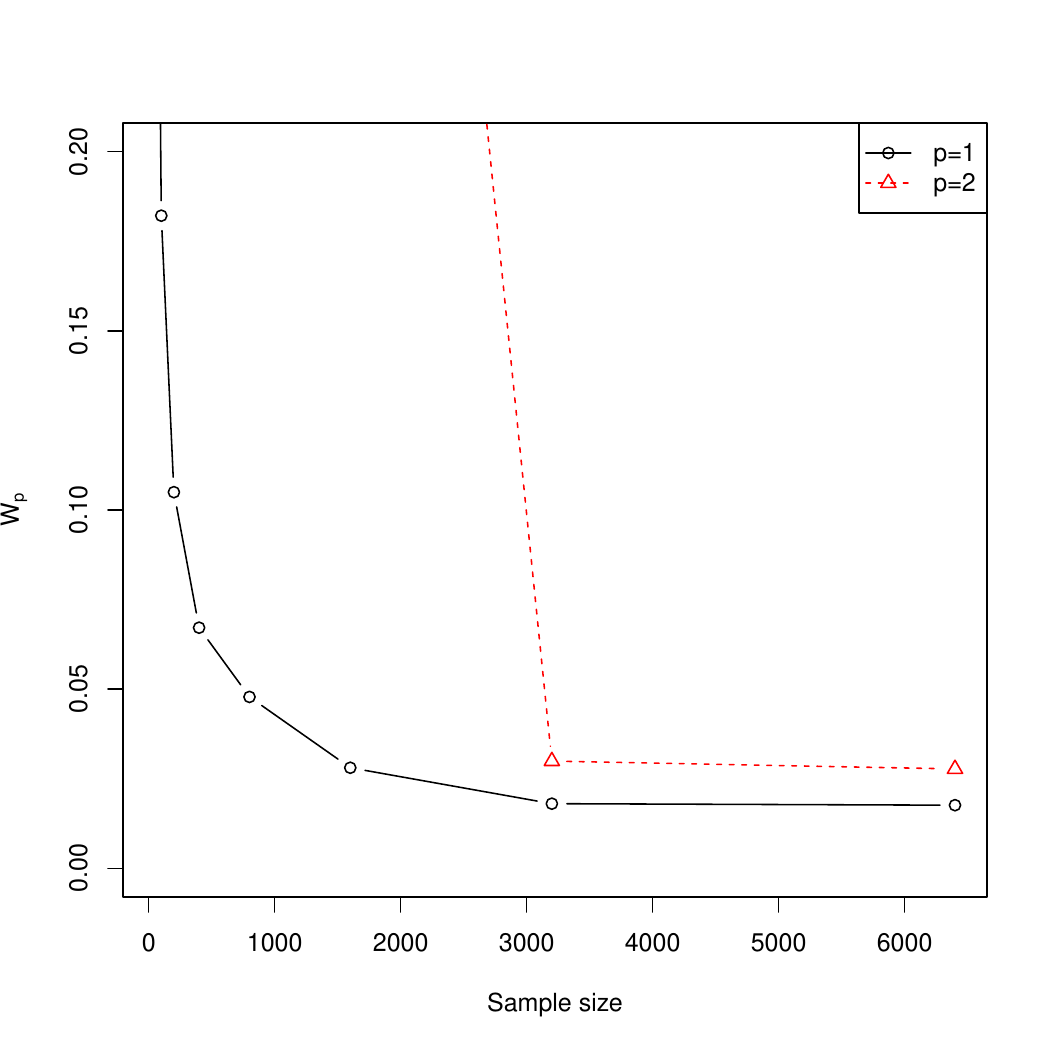}}}
\subfigure[$P_0=$ Normal]{\scalebox{0.2}{\includegraphics{./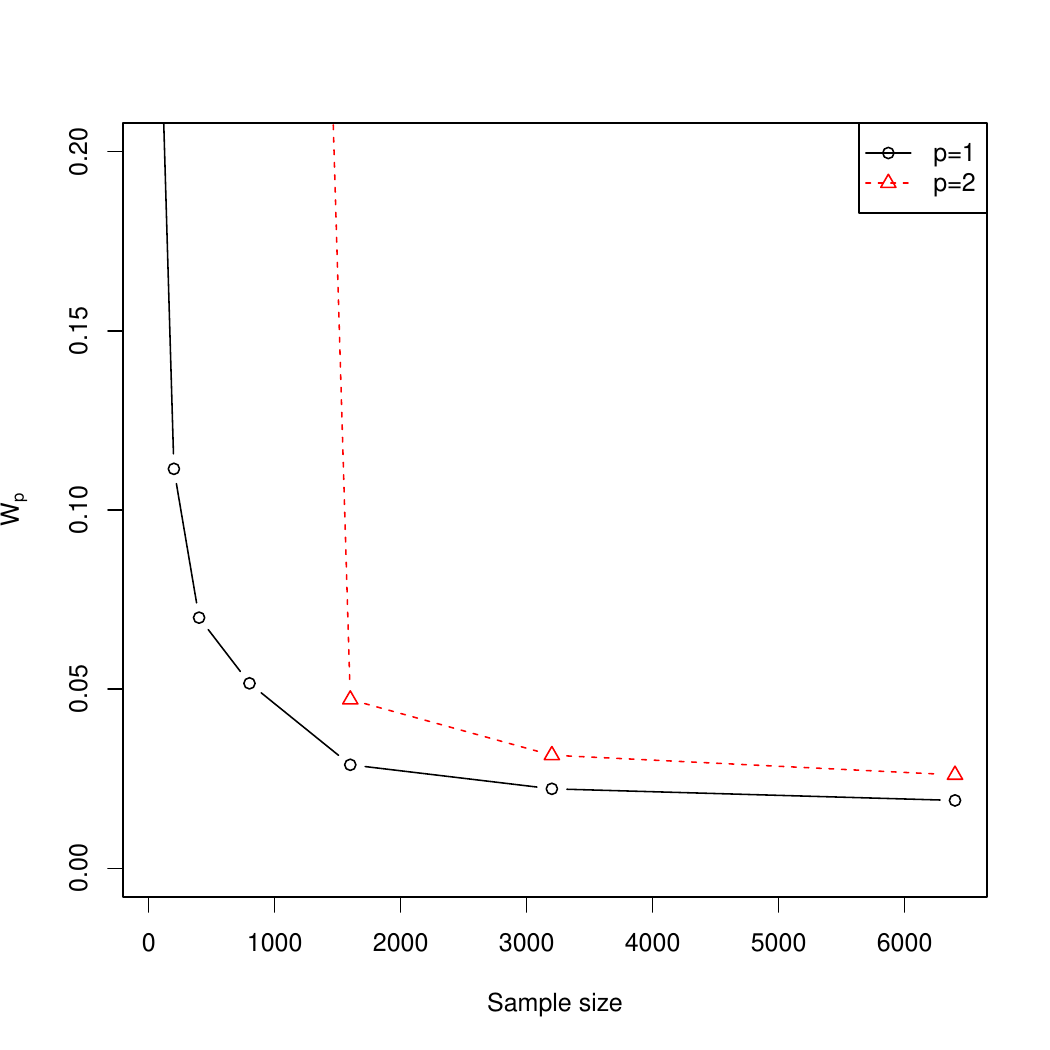}}}
\subfigure[$P_0$ Laplace]{\scalebox{0.2}{\includegraphics{./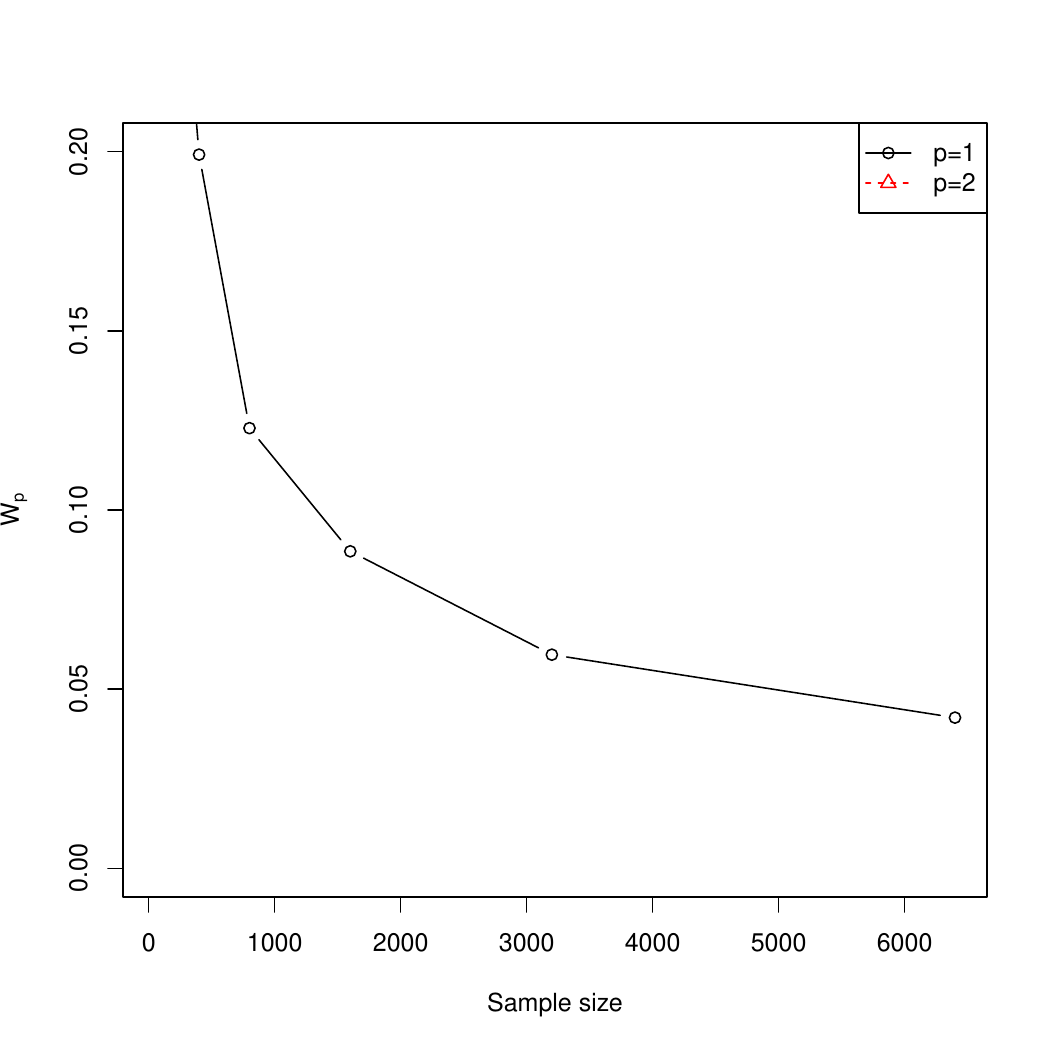}}}
\subfigure[$P_0= t$ with 20 df]{\scalebox{0.2}{\includegraphics{./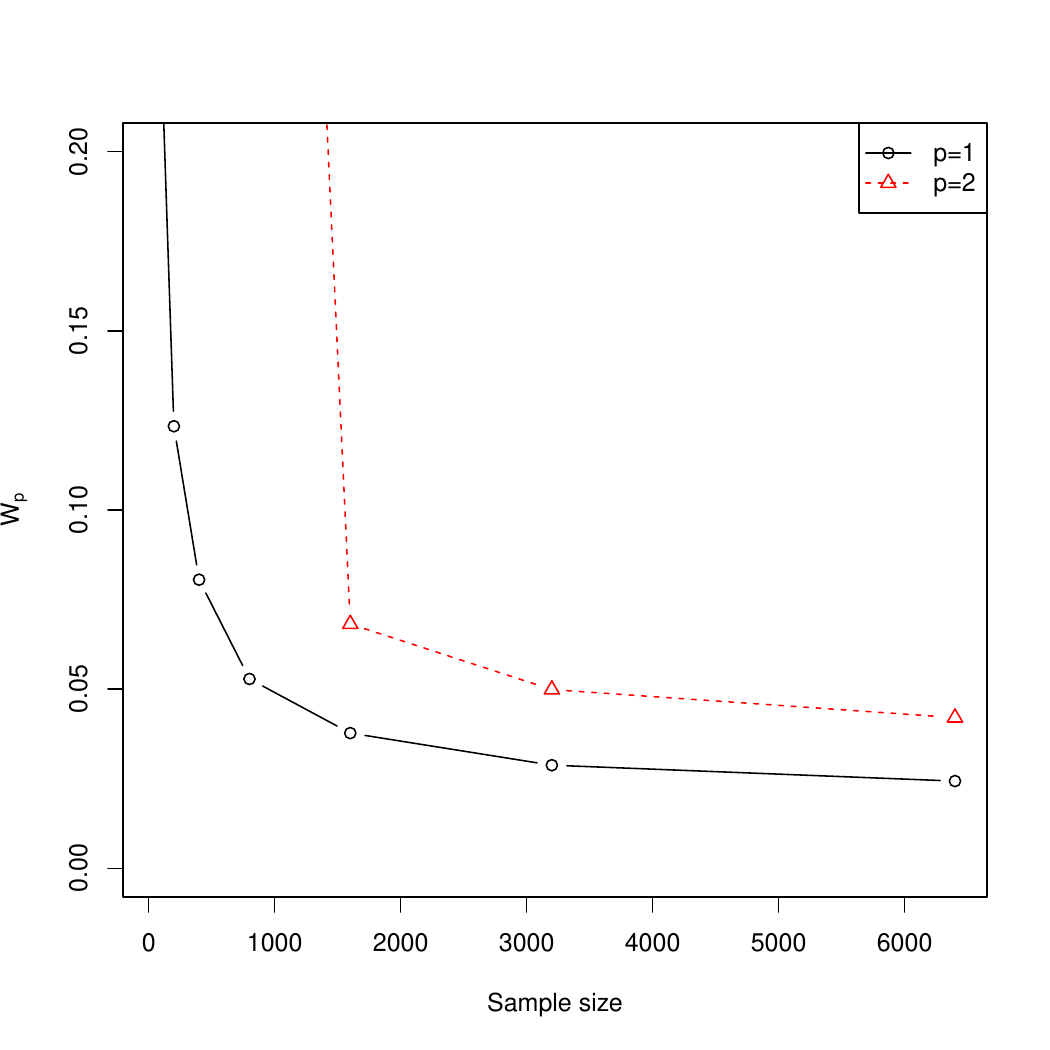}}}
\subfigure[$P_0= t$ with 10 df]{\scalebox{0.2}{\includegraphics{./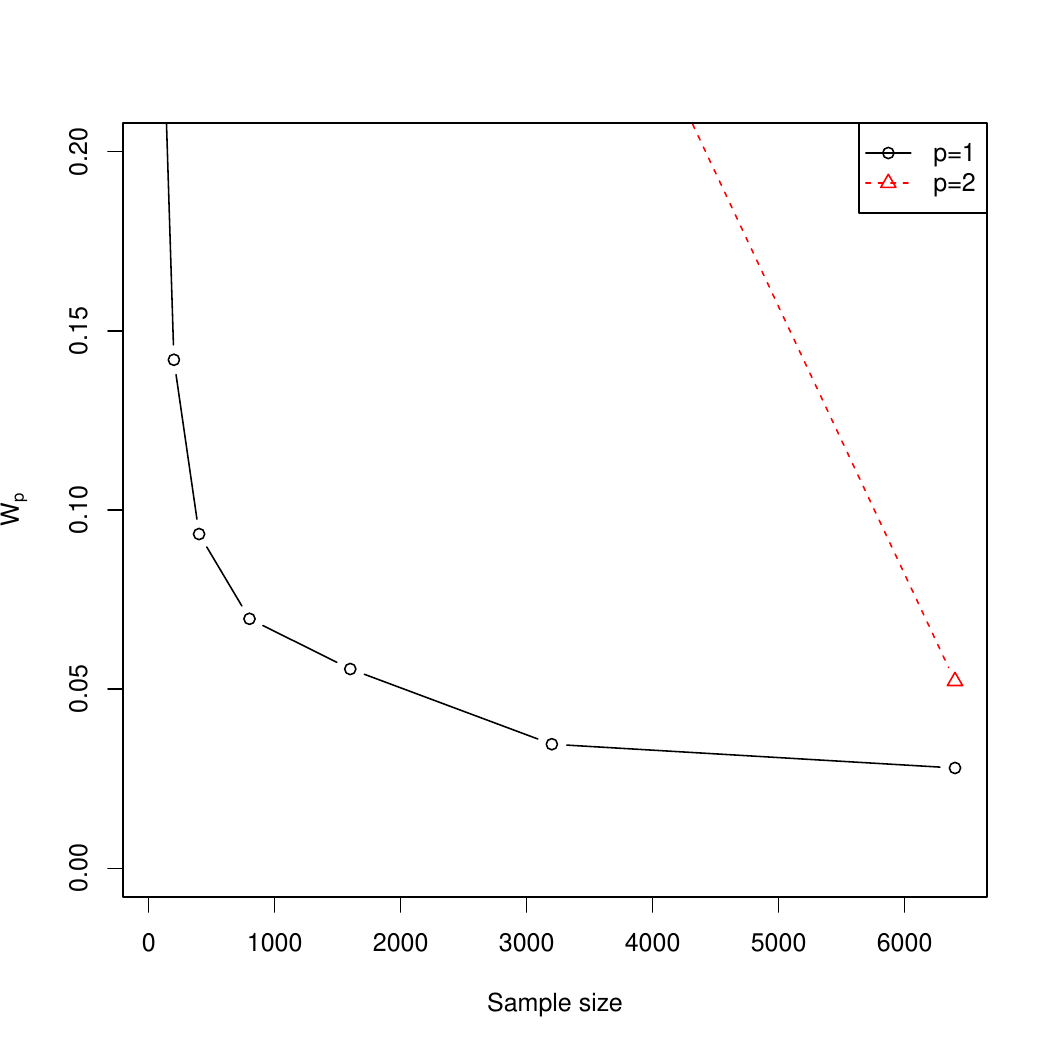}}}
\subfigure[$P_0= t$ with 5 df]{\scalebox{0.2}{\includegraphics{./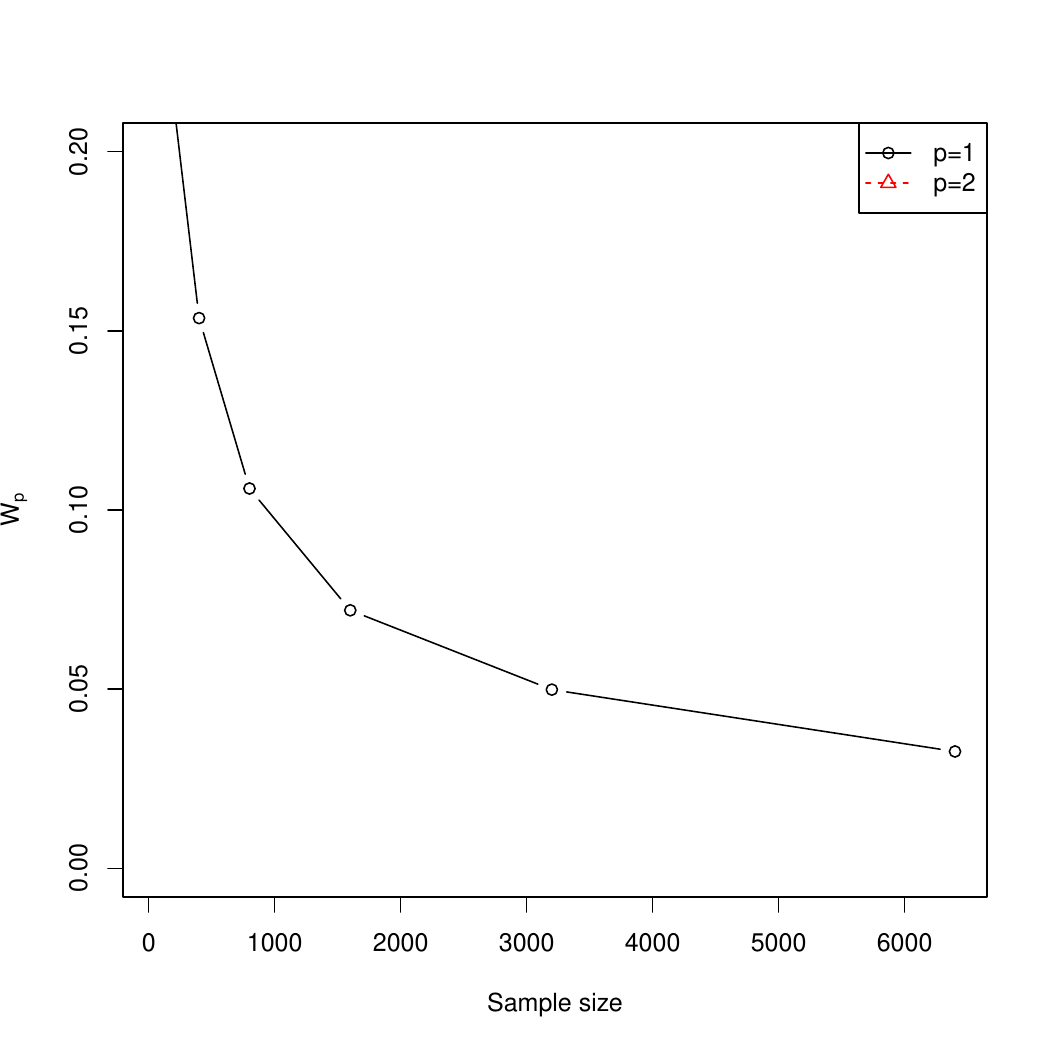}}}
\caption{Wasserstein distances between the true distributions and the posterior predictive distributions based on the location mixture with $H = N(0, 10^4)$.}
\label{fig:W-dpm3}
\end{figure}

\section{Discussion}
\label{sec:conclusion}

In this paper, we provided sufficient conditions for posterior consistency with respect to the Wasserstein metrics and the convergence rate to be $\epsilon_n$ in addition to the well-known KL conditions.
Based on our main theorem, the posterior probability that $W_p^p(P, P_0) \gtrsim \epsilon_n$ vanishes if $M_{2p+\delta}(P)$ is bounded by a constant for some $\delta > 0$ with high posterior probability.
A similar moment condition has been used in \cite{fournier2015rate} to show that $W_p^p(\bbP_n, P_0) \asymp n^{-1/2}$ with high probability.
The moment condition cannot be weakened in general as illustrated in our examples.
Under a stronger condition \eqref{eq:J_p}, which is a necessary and sufficient condition for $W_p(\bbP_n, P_0) \asymp n^{-1/2}$, we conjecture that the posterior probability that $W_p(P, P_0) \gtrsim \epsilon_n$ would vanish.

We note that asymptotic results given in this paper might be utilized to obtain posterior consistency and its convergence rate with respect to strong metrics such as the total variation.
For this, the key is to obtain posterior convergence rate in the Wasserstein metric and bound the total variation between smooth densities by a power of the Wasserstein metric.
More precisely, if $P$ and $Q$ possess smooth Lebesgue densities $p$ and $q$, one can prove that $\|p-q\|_1 \lesssim W_p^\alpha (P, Q)$ for some $\alpha > 0$, see \cite{chae2020wasserstein} for a sharp inequality.
This is a certain reverse inequality because the total variation generates a stronger topology than $W_p$ in the space of all probability measures on a bounded metric space.
This kind of reverse inequality and related theory for posterior consistency are the main motivation of the present paper, which was firstly considered in \cite{chae2017novel}.


We conclude by discussing an example where total variation consistency and a mild condition implies Wasserstein consistency. This is a non--trivial finding. For a given kernel density function $k$ on $\bbR$, consider a location mixture
\be \label{eq:location-mixture}
	p(x) = \int k(x-z) dG(z),
\ee
which is often called a convolution.
A prior $\Pi$ on $p$ can be induced from a prior on the mixing distribution $G$.
With slight abuse of notation, we use the same notation $\Pi$ for the prior of $G$.
Suppose that the true distribution is also of the form \eqref{eq:location-mixture}, that is $p_0(x) = \int k(x-z) dG_0(z)$ for some probability measure $G_0$.
In this case, posterior consistency with respect to the total variation automatically implies the consistency in $W_2$.
Suppose that $k$ is symmetric about the origin, $\int x^2 k(x) dx < \infty$, and that $\tilde k(t) \neq 0$ for every $t\in \bbR$, where $\tilde k$ is the Fourier transform of $f$ defined as $\tilde k(t) = \int e^{-itx} k(x) dx$.

\begin{theorem} \label{thm:convolution}
Suppose that the kernel $k$ satisfies the assumption described above.
Assume also that $M_2(G_0) < \infty$ and $\Pi( M_2(G) < \infty) = 1$.
Then, $\E\Pi( \|p - p_0\|_1 > \epsilon \mid X_1, \ldots, X_n) \to 0$ for every $\epsilon > 0$ implies that
\bean
	\E \Pi\Big( W_2(P, P_0) >\epsilon 
	\mid X_1,\ldots,X_n \Big) \to 0
\eean
for every $\epsilon > 0$.
\end{theorem}

Note that the condition $\Pi( M_2(G) < \infty) = 1$ is easily satisfied for well-known priors.
For example, if we put a Dirichlet process prior for $G$, the tail of $G$ is much lighter than that of its mean, see \cite{doss1982tails}.
Posterior consistency with respect to the total variation can also be easily established using a standard technique.

\section{Proofs}
\label{sec:proof}

Firstly, we introduce a basic set-up which is taken from \cite{fournier2015rate} with slight modification, see also \cite{dereich2013constructive, weed2019sharp}.
For nonnegative integers $l$, let $\cP_l$ be the natural partition of $(-1,1]$ into $2^l$ translations of $(-2^{-l}, 2^{-l}]$, that is, 
\bean
	\cP_l = \Big\{ (-1 + k 2^{1-l}, -1 + (k+1) 2^{1-l}]: k=0, 1, \ldots, 2^l-1 \Big\}.
\eean
Let $B_0 = (-1,1]$ and $B_m = (-2^m, 2^m] \backslash (-2^{m-1}, 2^{m-1}]$ for $m \geq 1$.
Let $\pi_m: \bbR \rightarrow \bbR$ be the function defined as $\pi_m(x) = x / 2^m$, and $\cR_{B_m}P$ be the probability measure on $(-1,1]$ defined as the $\pi_m$-image of $P|_{B_m} / P(B_m)$, that is, for any Borel set $F \subset (-1,1]$,
\bean
	\cR_{B_m}P(F) = \frac{P(\pi_m^{-1}(F) \cap B_m)}{P(B_m)}
\eean
or $\cR_{B_m}P(F)=0$ according as $P(B_m) > 0$ or $P(B_m) = 0$.

To get some insight of overall proofs, we next address how one can obtain the consistency of the empirical distribution with respect to $W_p$.
Suppose for a moment that $P_0$ is supported on $[-1,1]$.
Then, Lemma \ref{lem:D-compact} implies that if $|\bbP_n(F) - P_0(F)|$ is sufficiently small for every $F \in \cP_l$ and $l \leq L$, where $L$ is a large enough constant, then $W_p(\bbP_n, P_0)$ will also be small.
Since there are various tools to bound the deviation $|\bbP_n(F) - P_0(F)|$, \eg\ the inequality by \cite{hoeffding1963probability}, it is not difficult to prove that the empirical distribution converges to $P_0$ in probability with respect to $W_p$, $1 \leq p < \infty$, with the help of Lemma \ref{lem:D-compact}.

In case that $P_0$ has an unbounded support, Lemma \ref{lem:W1-unbounded} can be applied for the Wasserstein consistency of $\bbP_n$.
Indeed, if $|\bbP_n(\pi_m^{-1}(F)) - P_0(\pi_m^{-1}(F))|$ is sufficiently small for every $F \in \cP_l$, $l \leq L$ and $m \leq M$, where $L$ and $M$ are large constants, then $W_p(\bbP_n, P_0)$ will be small.
Note that $L$ and $M$ can be chosen as large but fixed constants, so the consistency of $\bbP_n$ can be similarly proven using a large deviation inequality such as the Hoeffding's inequality.
Here, it plays an important role that $M_p(\bbP_n)$ converges to $M_p(P_0)$ by the law of large numbers, because once the $p$th moment of $\bbP_n$ and $P_0$ is bounded, it is relatively easy to prove the Wasserstein consistency, see the proof of Theorem \ref{thm:consistency} and Lemma \ref{lem:W-set1} for details.

\subsection{Frequently used results from literature}
\label{ssec:lemmas}

The KL condition \eqref{eq:KL-rate} gives a suitable lower bound of the integrated likelihood, that is, the denominator in \eqref{eq:posterior}.
Once this condition holds, the posterior probability of a sequence of subsets $\cF_n$ of $\cF$ can be shown to converge to 1 if the prior probability of $\cF_n^c$ or likelihood is sufficiently small.
The latter can often be expressed through the existence of a certain sequence of uniformly consistent tests.
Lemmas \ref{lem:ghosal2000test} and \ref{lem:ghosal2000prior} are taken from \cite{ghosal2000convergence} with slight modification for the simplicity.
The rate sequence $\epsilon_n$ is assumed that $\epsilon_n \rightarrow 0$ and $n\epsilon_n^2 \rightarrow \infty$.

\medskip
\begin{lemma} \label{lem:ghosal2000test}
Suppose that $\Pi(\cK_n) \geq e^{-n\epsilon_n^2}$ and assume that there exists a sequence of tests $\phi_n$ such that
\bean
	P_0 \phi_n \rightarrow 0
	\quad {\rm and} \quad
	\sup_{P \in \cF_n^c} P(1-\phi_n) \leq e^{-3n\epsilon_n^2}
\eean
for $\cF_n \subset \cF$.
Then, 
  $\Pi\Big( \cF_n^c \mid X_1, \ldots, X_n\Big)
  \rightarrow 0$
in probability.
\end{lemma}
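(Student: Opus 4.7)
The standard route is to bound numerator and denominator of the posterior ratio \eqref{eq:posterior} restricted to $\cF_n^c$ separately, using the tests for the numerator and the KL condition for the denominator. I would begin by writing
\[
\Pi(\cF_n^c \mid X_1,\ldots,X_n) \;\leq\; \phi_n \;+\; (1-\phi_n)\,\frac{N_n}{D_n},
\]
where $N_n = \int_{\cF_n^c}(1-\phi_n)\prod_{i=1}^n [p(X_i)/p_0(X_i)]\,d\Pi(p)$ and $D_n = \int \prod_{i=1}^n [p(X_i)/p_0(X_i)]\,d\Pi(p)$. The first term tends to zero in $P_0$-probability because $P_0 \phi_n \to 0$, so the task reduces to showing $N_n/D_n \to 0$.

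For the numerator, Fubini and the uniform test power bound give
\[
\E\bigl[N_n\bigr] \;=\; \int_{\cF_n^c} P^n(1-\phi_n)\,d\Pi(p) \;\leq\; e^{-3n\epsilon_n^2}.
\]
Markov's inequality then yields $P_0^n\bigl(N_n > e^{-5n\epsilon_n^2/2}\bigr) \leq e^{-n\epsilon_n^2/2} \to 0$, so with high probability the numerator is smaller than $e^{-5n\epsilon_n^2/2}$.

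The crux is the lower bound on the denominator, which is the classical Ghosal--Ghosh--van der Vaart denominator lemma. The plan is to restrict the integral to $\cK_n$ and apply Jensen's inequality together with a second-moment/Chebyshev argument to the random variables $\log[p_0(X_i)/p(X_i)]$: on $\cK_n$ each such variable has $P_0$-mean at most $\epsilon_n^2$ and $P_0$-variance at most $\epsilon_n^2$, so
\[
P_0^n\!\left( \int_{\cK_n} \prod_{i=1}^n \frac{p(X_i)}{p_0(X_i)}\,d\Pi(p) \;<\; e^{-2n\epsilon_n^2}\,\Pi(\cK_n) \right) \;\longrightarrow\; 0.
\]
Combined with $\Pi(\cK_n)\geq e^{-n\epsilon_n^2}$, this produces $D_n \geq e^{-3n\epsilon_n^2}$ on an event of $P_0^n$-probability tending to one.

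Putting the two bounds together on the intersection of the two high-probability events, $N_n/D_n \leq e^{-5n\epsilon_n^2/2}/e^{-3n\epsilon_n^2} = e^{-n\epsilon_n^2/2} \to 0$, which closes the argument. The only nontrivial step is the denominator lemma; the tests and Markov inequality are routine. I would either cite \cite{ghosal2000convergence} for the denominator bound or, if self-containedness is desired, include a short proof invoking Fubini to write the log-integrated likelihood in terms of i.i.d.\ sums and then Chebyshev on the centered sum.
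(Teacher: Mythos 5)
The paper does not actually prove this lemma; it simply imports it from \cite{ghosal2000convergence} with no proof. So there is no in-paper argument to compare against, and the right question is whether your proposal is self-consistent. The decomposition, the Fubini/Markov step for the numerator, and the appeal to the denominator lemma are all the standard route and are correct in outline.

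However, there is an arithmetic error in the last step that breaks the argument as written. You have, with high probability, $N_n < e^{-5n\epsilon_n^2/2}$ and $D_n \geq e^{-3n\epsilon_n^2}$, and you claim
\[
\frac{N_n}{D_n} \leq \frac{e^{-5n\epsilon_n^2/2}}{e^{-3n\epsilon_n^2}} = e^{-n\epsilon_n^2/2}.
\]
But $-\tfrac{5}{2} - (-3) = +\tfrac{1}{2}$, so the ratio is $e^{+n\epsilon_n^2/2}$, which diverges. The problem is that you used $C=1$ in the Ghosal--Ghosh--van der Vaart denominator lemma (yielding $D_n\geq e^{-2n\epsilon_n^2}\Pi(\cK_n)\geq e^{-3n\epsilon_n^2}$), which leaves no room between the numerator Markov threshold and the denominator lower bound. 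The fix is to use the lemma with a smaller $C$: for any $C>0$, with $P_0^n$-probability tending to one, $D_n \geq e^{-(1+C)n\epsilon_n^2}\Pi(\cK_n) \geq e^{-(2+C)n\epsilon_n^2}$. Taking $C = 1/4$ gives $D_n \geq e^{-9n\epsilon_n^2/4}$, and your Markov threshold $e^{-5n\epsilon_n^2/2}$ then yields $N_n/D_n \leq e^{-n\epsilon_n^2/4} \to 0$, while the Markov probability bound $e^{-3n\epsilon_n^2}/e^{-5n\epsilon_n^2/2} = e^{-n\epsilon_n^2/2}\to 0$ is unaffected. Alternatively one can keep $D_n \geq e^{-3n\epsilon_n^2}$ and avoid the exponent race altogether by Markov directly on $N_n e^{3n\epsilon_n^2}$: since $\E[N_n e^{3n\epsilon_n^2}]\leq 1$, this only shows $N_n/D_n = O_P(1)$, which is not enough---so the $C<1$ slack in the denominator lemma is genuinely needed.
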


\medskip
\begin{lemma}\label{lem:ghosal2000prior}
Suppose that $\Pi(\cK_n) \geq e^{-n\epsilon_n^2}$ and $\Pi(\cF_n^c) \leq e^{-3n\epsilon_n^2}$ for $\cF_n \subset \cF$.
Then, 
  $\Pi\Big( \cF_n^c \mid X_1, \ldots, X_n\Big)
  \rightarrow 0$
in probability.
\end{lemma}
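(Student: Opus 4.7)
The plan is to write the posterior probability as a ratio $N_n/D_n$, where $N_n = \int_{\cF_n^c}\prod_{i=1}^{n}[p(X_i)/p_0(X_i)]\,d\Pi(p)$ and $D_n = \int\prod_{i=1}^{n}[p(X_i)/p_0(X_i)]\,d\Pi(p)$, and to bound $N_n$ from above and $D_n$ from below, each event having $P_0$-probability tending to one. The final bound on the ratio will then be exponentially small.

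For the denominator I would run the standard ``evidence lower bound'' argument based on the KL condition. Let $\tilde\Pi$ denote the normalized restriction of $\Pi$ to $\cK_n$. Jensen's inequality applied to $\log$ yields $\log D_n \ge \log \Pi(\cK_n) + \sum_{i=1}^n f(X_i)$ with $f(x) = \int \log[p(x)/p_0(x)]\,d\tilde\Pi(p)$. The defining inequalities of $\cK_n$ combined with Fubini give $P_0 f \ge -\epsilon_n^2$, while a second application of Jensen to $f^2$ gives $P_0 f^2 \le \epsilon_n^2$. Chebyshev's inequality then produces $\sum_{i=1}^n f(X_i) \ge -(1+c)n\epsilon_n^2$ with $P_0$-probability at least $1 - 1/(c^2 n\epsilon_n^2)$ for any fixed $c > 0$. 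Together with $\Pi(\cK_n) \ge e^{-n\epsilon_n^2}$, this delivers $D_n \ge e^{-(2+c)n\epsilon_n^2}$ on an event of probability tending to one.

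For the numerator, Fubini's theorem and the identity $\E\big[\prod_{i=1}^n p(X_i)/p_0(X_i)\big] = 1$ for each density $p$ (which absorbs the denominator $p_0$) yield $\E[N_n] = \Pi(\cF_n^c) \le e^{-3n\epsilon_n^2}$. Markov's inequality then gives $N_n \le e^{-(3-c')n\epsilon_n^2}$ with probability at least $1 - e^{-c' n\epsilon_n^2}$ for any $c' > 0$. Choosing $c,c' > 0$ with $c + c' < 1$, on the intersection of the two high-probability events the ratio satisfies $N_n/D_n \le e^{-(1-c-c')n\epsilon_n^2}$, which tends to $0$ since $n\epsilon_n^2 \to \infty$, proving the claim.

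The main step is the denominator bound, but it is a classical Jensen--Chebyshev argument going back to \cite{ghosal2000convergence}; the rest is just Fubini plus Markov. The simplification relative to Lemma \ref{lem:ghosal2000test} is that exponentially small prior mass on $\cF_n^c$ directly controls $\E[N_n]$, so no testing construction is needed.
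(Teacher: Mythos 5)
Your proof is correct and is essentially the standard argument from \cite{ghosal2000convergence}, which is the reference the paper cites for this lemma without reproducing a proof; the evidence-lower-bound (Jensen plus Chebyshev) for the denominator and the Fubini--Markov bound for the numerator are exactly the ingredients used there. No gaps: you correctly note that $P_0 f^2 \le \epsilon_n^2$ dominates $\mathrm{Var}(f)$, and the choice $c + c' < 1$ together with $n\epsilon_n^2 \to \infty$ closes the argument.
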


\medskip
The following lemmas are taken from \cite{fournier2015rate} with slight modification, see also \cite{dereich2013constructive, weed2019sharp}.
Since the statement of Lemma \ref{lem:D-compact} is slightly different from these papers, we provide a detailed proof for the reader's convenience.

\begin{lemma} \label{lem:D-compact}
Assume that two probability measures $P$ and $Q$ are supported on $(-1,1]$.
Then,
\bean
	W_p^p(P,Q) \leq \kappa_p \Bigg(\sum_{l=1}^L 2^{-lp} \sum_{F \in \cP_l} | P(F) - Q(F) | + 2^{-Lp} \Bigg)
\eean
for every $L \geq 1$, where $\kappa_p$ is a constant depending only on $p \geq 1$.
\end{lemma}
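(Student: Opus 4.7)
The plan is to build an explicit hierarchical coupling of $P$ and $Q$ along the dyadic tree $\{\cP_l\}_{l=0}^{L}$ (with $\cP_0 = \{(-1,1]\}$) and control its $L^p$ cost level-by-level. First I would discretise: let $m_F$ denote the midpoint of $F \in \cP_L$, define $\phi_L(x) = m_F$ whenever $x \in F$, and let $P_L$ and $Q_L$ be the pushforwards of $P$ and $Q$ under $\phi_L$. Since $|x - \phi_L(x)| \leq 2^{-L}$, the diagonal coupling gives $W_p^p(P, P_L) \leq 2^{-Lp}$ and similarly for $Q$. Using the three-step path $x \mapsto \phi_L(x) \mapsto \phi_L(y) \mapsto y$ together with $(a+b+c)^p \leq 3^{p-1}(a^p+b^p+c^p)$, it suffices to bound $W_p^p(P_L,Q_L)$ up to constants.

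To bound $W_p^p(P_L,Q_L)$ I would define a greedy transport from the finest level upward. Inside each $F \in \cP_L$ match $\min(P(F),Q(F))$ units to themselves (zero cost). Then for $l = L, L-1, \ldots, 2$, inside every $G \in \cP_{l-1}$ with children $F_0,F_1 \in \cP_l$ match as much leftover $P$-residual on one side against leftover $Q$-residual on the other as possible. A short induction shows that after phase $l$ the unmatched residual in $G$ lies on a single side and has total mass $|P(G)-Q(G)|$; since $P((-1,1])=Q((-1,1])=1$, everything is matched after phase $l = 1$. Two observations then control the cost: (i) if $F,F' \in \cP_L$ have lowest common ancestor at level $l-1$, then $m_F$ and $m_{F'}$ lie in a common cell of length $2^{2-l}$, giving per-unit cost at most $2^{(2-l)p}$; (ii) the mass matched at phase $l$ (i.e.\ inside a level-$(l-1)$ cell but not inside any of its proper sub-cells) equals the drop in total $P$-residual, namely
\[
\sum_{F \in \cP_l}(P(F)-Q(F))^+ - \sum_{G \in \cP_{l-1}}(P(G)-Q(G))^+ \leq \tfrac{1}{2}\sum_{F \in \cP_l}|P(F)-Q(F)|,
\]
using that the signed differences sum to zero. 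Summing the product of (i) and (ii) over $l = 1,\ldots,L$ gives $W_p^p(P_L,Q_L) \leq 2^{2p-1}\sum_{l=1}^L 2^{-lp}\sum_{F \in \cP_l}|P(F)-Q(F)|$, and combining with the two pushforward errors yields the stated inequality with $\kappa_p = 3^{p-1}\max(2, 2^{2p-1})$.

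The only genuinely non-routine piece is the bookkeeping in the hierarchical step: one has to track the signed $P$- and $Q$-residuals through the tree so that mass unmatched at fine levels does not inflate the estimate at coarser levels. An equivalent and conceptually cleaner formulation is to view the problem as minimum-cost flow on the binary tree $\{\cP_l\}$, where the flow across the edge joining $F \in \cP_l$ to its parent equals $|P(F)-Q(F)|$; one then only needs observation (i), that the pairwise distance of mass merging at a level-$(l-1)$ node is at most $2^{2-l}$, in order to translate edge flows into an $L^p$ transport cost, and the constant $\kappa_p$ comes out transparently.
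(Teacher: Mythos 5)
Your proof is correct, and it takes a genuinely different route from the paper's. The paper follows the Dereich--Fournier--Guillin approximation scheme: it introduces the $\cP_l$-approximation $P_l$ of $P$ to $Q$ (reweighting $P$ within each level-$l$ cell so that cell masses match those of $Q$), observes that $P_0 = P$ and that $W_p(P_L,Q) \le 2^{1-L}$, uses the coupling lemma (Lemma~\ref{lem:approximation}) to couple consecutive approximations $P_l$ and $P_{l+1}$ with controlled disagreement probability, glues these into a Markov chain $Z_0,\ldots,Z_L$ on a common probability space, and bounds $\bbE|Z_0-Z_L|^p$ by conditioning on the first level of disagreement. You instead discretize \emph{both} $P$ and $Q$ to the midpoints of the finest-level cells and then build an explicit bottom-up greedy transport plan (equivalently, a min-cost flow on the dyadic tree) between the two discrete measures, tracking residual masses through the hierarchy. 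Your key accounting identity $\sum_{F\in\cP_l}(P(F)-Q(F))^+ = \tfrac12\sum_{F\in\cP_l}|P(F)-Q(F)|$ (valid because signed differences at a fixed level sum to zero) replaces the paper's telescoping over $\mu(Z_{l+1}\neq Z_l)$, and your per-unit cost bound $2^{(2-l)p}$ plays the role of the paper's a.s.\ bound $|Z_{l+1}-Z_l|\le 2^{1-l}$. The net result is the same inequality; what your version buys is that it avoids the probabilistic coupling/Markov-chain machinery entirely and makes the transport plan explicit, which is arguably more transparent and also yields a slightly smaller constant (your $\kappa_p = 3^{p-1}2^{2p-1}$ versus the paper's $\kappa_p = (1+2^p)2^{3p-2}$). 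What the paper's approach buys is that the $\cP_l$-approximation framework extends verbatim to the unbounded case (Lemma~\ref{lem:W1-unbounded}), where the authors reuse the same machinery, so it fits more naturally into their overall development.
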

\begin{proof}

For a Borel partition $\{A_k: k \geq 1\}$ of a Borel set $A \subset \bbR$ and two finite measures $P$ and $Q$ on $A$ with equal mass, define the finite measure $\overline P$ as
\bean
	\overline P |_{A_k} = \frac{Q(A_k)}{P(A_k)} P |_{A_k}
\eean
if it is well-defined, that is, $P(A_k)=0$ implies $Q(A_k)=0$.
Here, $P |_{A_k}$ and $\overline P |_{A_k}$ denote the restrictions of $P$ and $\overline P$, respectively, onto $A_k$.
We say $\overline P$ is the $\{A_k: k \geq 1\}$-approximation of $P$ to $Q$.
Then, we have the following lemma whose proof is explicitly given in \cite{dereich2013constructive} (pp. 1189--1190).

\begin{lemma} \label{lem:approximation}
Suppose that the $\{A_k: k \geq 1\}$-approximation $\overline P$ of $P$ to $Q$ is well-defined.
Then, there exists a coupling $\xi$ of $P$ and $\overline P$ such that
\bean
	\xi\Big( \big\{ (x,y): x \neq y \big\} \Big) = \frac{1}{2} \sum_{k\geq 1} | P(A_k) - Q(A_k) |.
\eean
\end{lemma}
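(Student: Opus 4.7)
The plan is to construct the coupling $\xi$ explicitly by splitting each restriction $P|_{A_k}$ and $\overline{P}|_{A_k}$ into a ``common'' part (to be matched along the diagonal) and an ``excess'' part (to be matched across different partition elements). The key observation is that, by the definition of the approximation, $\overline{P}|_{A_k} = (Q(A_k)/P(A_k))\, P|_{A_k}$, so the two measures on $A_k$ are simply proportional. Hence their pointwise minimum, as measures, is clean: writing $\alpha_k = P(A_k)\wedge Q(A_k)$, we get $\lambda_k := (\alpha_k/P(A_k))\, P|_{A_k}$, a measure on $A_k$ of total mass $\alpha_k$, provided $P(A_k)>0$; when $P(A_k)=0$, well-definedness of $\overline{P}$ forces $Q(A_k)=0$ and we take $\lambda_k = 0$.

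Next, I would define the residuals $R := \sum_k (P|_{A_k} - \lambda_k)$ and $\overline{R} := \sum_k (\overline{P}|_{A_k} - \lambda_k)$. Using $\sum_k P(A_k) = \sum_k Q(A_k)$, both have the same total mass
\[
	m := \sum_k \big(P(A_k)-\alpha_k\big) = \sum_k \big(P(A_k)-Q(A_k)\big)_+ = \tfrac{1}{2}\sum_k |P(A_k)-Q(A_k)|.
\]
Critically, $R$ is supported on the union of those $A_k$ with $P(A_k)>Q(A_k)$, while $\overline{R}$ is supported on the union of those $A_k$ with $P(A_k)<Q(A_k)$; these two index sets are disjoint, so the supports of $R$ and $\overline{R}$ are disjoint subsets of $\bbR$.

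Then I would set
\[
	\xi \;=\; \sum_k (\mathrm{id}\times\mathrm{id})_* \lambda_k \;+\; \tfrac{1}{m}\, R \otimes \overline{R},
\]
with the second term interpreted as zero when $m=0$ (in which case $P=\overline{P}$ and $\xi$ is just the diagonal coupling). Verifying the marginals is a direct computation: the first marginal of the diagonal part is $\sum_k \lambda_k$ and of the product part is $R$, so the total first marginal is $\sum_k \lambda_k + R = \sum_k P|_{A_k} = P$, and similarly the second marginal is $\overline{P}$. The off-diagonal mass is computed separately on the two summands: the first places no mass off the diagonal, while the second is concentrated on $\mathrm{supp}(R)\times \mathrm{supp}(\overline{R})$, a product of disjoint sets, hence entirely off the diagonal; its total mass is $m$.

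I do not expect any serious obstacle: the argument is essentially bookkeeping, and the only delicate points are (i) handling partition elements with $P(A_k)=0$, which is dispatched by the well-definedness hypothesis on $\overline{P}$, and (ii) the degenerate case $m=0$, which is trivial. The important conceptual step is noticing that the excess mass in $P$ and the excess mass in $\overline{P}$ automatically live in disjoint parts of $\bbR$, so any coupling of $R$ with $\overline{R}$ sits off the diagonal ``for free,'' without needing the partition elements to be geometrically separated.
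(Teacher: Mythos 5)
Your construction is correct, and it is essentially the same common-minorant (maximal coupling) argument that the paper outsources to Dereich, Scheutzow and Schottstedt (2013), pp.\ 1189--1190: match the pointwise minimum $\lambda_k = (P(A_k)\wedge Q(A_k))/P(A_k)\cdot P|_{A_k}$ along the diagonal, then couple the residuals, which have mass $\tfrac12\sum_k|P(A_k)-Q(A_k)|$ and automatically have disjoint supports because $P|_{A_k}$ and $\overline P|_{A_k}$ are proportional, so \emph{any} coupling of the residuals (the product works) contributes no diagonal mass. All the computations check out, including the marginals, the use of equal total mass to identify the residual masses, and the degenerate cases $P(A_k)=0$ and $m=0$.
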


For $l\geq 0$, let $P_l$ be the $\cP_l$-approximation of $P$ to $Q$.
We only consider the case that $P_l$ is well-defined for all $l \geq 0$.
The other case can be handled with further details, see Proposition 1 of \cite{weed2019sharp}.

Since $P_l(F) = Q(F)$ for $F \in \cP_l$, we have $W_p(P_l, Q) \leq 2^{-(l-1)}$ for every $l \geq 0$.
Furthermore, it is easy to check that, for $F \in \cP_l$, $P_l(F) = P_{l+1}(F)$ and $P_{l+1} |_F$ is the $\{C\in\cP_{l+1}: C\subset F\}$-approximation of $P_l|_F$ to $Q|_F$.
Therefore, by Lemma \ref{lem:approximation}, there exists a coupling $\xi_{l+1}$ of $P_l$ and $P_{l+1}$ such that
\bean
	&& \xi_{l+1} \Big( \{(x,y): x\neq y\} \Big) 
	= \frac{1}{2} \sum_{F \in \cP_l} \sum_{\substack{C: C \subset F \\C\in \cP_{l+1} }} \Big| P_l|_F(C) - Q|_F(C) \Big|
	\\
	&& = \frac{1}{2} \sum_{F \in \cP_l} \sum_{\substack{C: C \subset F \\C\in \cP_{l+1} }} \Big| P_l(C) - Q(C) \Big|
	= \frac{1}{2} \sum_{F \in \cP_l} \sum_{\substack{C: C \subset F \\C\in \cP_{l+1} }} \bigg|  Q(C) - \frac{Q(F)}{P(F)} P(C) \bigg|.
\eean
It follows that there exist random variables $Z_0, Z_1, Z_2, \ldots$ in a same probability space, say $(S, \cS, \mu)$, such that 
\bean
	&& \mu \Big(|Z_{l+1} - Z_l| \leq 2^{-(l-1)} \Big) = 1,
	\\
	&& \mu \Big( Z_{l+1} \neq Z_l \Big ) = \frac{1}{2} \sum_{F \in \cP_l} \sum_{\substack{C: C \subset F \\C\in \cP_{l+1} }} \left| Q(C) - \frac{Q(F)}{P(F)} P(C) \right|
\eean
and $Z_l$ is marginally distributed as $P_l$.
Let $N = \inf \{ l: Z_{l+1} \neq Z_l\}$, where the infimum of the empty set is set to be infinity.
Then, conditional on the event $\{N=l\}$ with $l < L$, where $L$ is a fixed positive integer, we have
\bean
	|Z_0 - Z_L| \leq \sum_{l'=l}^{L-1} |Z_{l'} - Z_{l'+1}| \leq 2^{-(l-2)}
\eean
with probability one.
It follows that
\bean
	\bbE |Z_0 - Z_L|^p 
	&\leq& \sum_{l=0}^{L-1} \bbE \Big[|Z_0 - Z_L|^p ~\Big|~ N=l \Big] \mu\Big( N = l \Big)
	\\
	&\leq& \sum_{l=0}^{L-1} 2^{-(l-2)p} \mu\Big( Z_{l+1} \neq Z_l \Big)
	\\
	&=& \frac{1}{2} \sum_{l=0}^{L-1} 2^{-(l-2)p} \sum_{F \in \cP_l} \sum_{\substack{C: C \subset F \\C\in \cP_{l+1} }} \left| Q(C) - \frac{Q(F)}{P(F)} P(C) \right|.
\eean
Therefore,
\bean
	W_p^p (P, Q) &\leq& 2^{p-1} \Big(W_p^p (P, P_L) + W_p^p(P_L, Q) \Big)
	\\
	&\leq& 2^{p-1} \bigg( \frac{1}{2} \sum_{l=0}^{L-1} 2^{-(l-2)p} \sum_{F \in \cP_l} \sum_{\substack{C: C \subset F \\C\in \cP_{l+1} }} \left| Q(C) - \frac{Q(F)}{P(F)} P(C) \right| + 2^{-p(L-1)} \bigg)	
\eean
Since
\bean
	\left| Q(C) - \frac{Q(F)}{P(F)} P(C) \right|
	&=& \frac{1}{P(F)} \Big| Q(C) P(F) - Q(F) P(C)\Big|
	\\
	&\leq& \frac{1}{P(F)} \Big[ P(F) |Q(C) - P(C)| + P(C) |P(F) - Q(F)| \Big]
	\\
	&=& \frac{P(C)}{P(F)} | P(F) - Q(F)| + |P(C) - Q(C)|,
\eean
we have
\bean
	\frac{W_p^p(P, Q)}{2^{p-1}}
	&\leq& \frac{1}{2} \sum_{l=0}^{L-1} 2^{-(l-2)p} \sum_{F \in \cP_l} \sum_{\substack{C: C \subset F \\C\in \cP_{l+1} }} \left[ \frac{P(C)}{P(F)} | P(F) - Q(F)| + |P(C) - Q(C)| \right] + 2^{-p(L-1)}
	\\
	&=& \frac{1}{2} \sum_{l=0}^{L-1} 2^{-(l-2)p} \left[ \sum_{F\in\cP_l} |P(F) - Q(F)| + \sum_{F\in\cP_{l+1}} |P(F) - Q(F)| \right] + 2^{-p(L-1)}
	\\
	&=& \frac{1}{2} \sum_{l=1}^{L-1} 2^{-(l-2)p} \sum_{F\in\cP_l} |P(F) - Q(F)| 
	+ \frac{1}{2} \sum_{l=1}^L 2^{-(l-3)p} \sum_{F\in\cP_l} |P(F) - Q(F)| + 2^{-p(L-1)}
	\\
	&\leq& \frac{1+2^p}{2} \sum_{l=1}^L 2^{-(l-2)p} \sum_{F \in \cP_l} |P(F) - Q(F)| + 2^{-p(L-1)},
\eean
where the second equality holds because $\sum_{F\in\cP_0} |P(F) - Q(F) | = 0$.
\end{proof}

\medskip
\begin{lemma} \label{lem:W1-unbounded}
For two probability measures $P$ and $Q$ on $\bbR$,
\be\begin{split}\label{eq:W1-unbounded}
	W_p^p(P, Q) \leq \sum_{m \geq 0} 2^{mp} \bigg[ 2^{p-1} |P(B_m) - Q(B_m)| 
	+ \Big( P(B_m) \wedge Q(B_m) \Big) W_p^p( \cR_{B_m} P,  \cR_{B_m} Q) \bigg].
\end{split}\ee
\end{lemma}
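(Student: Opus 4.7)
The plan is to construct an explicit coupling $\pi \in \scrC(P, Q)$ whose transport cost realises the right-hand side, and then invoke $W_p^p(P, Q) \leq \int |x-y|^p d\pi(x,y)$. Along the annular partition $\{B_m\}_{m \geq 0}$ of $\bbR$, I decompose $P = \sum_{m \geq 0} P(B_m) P_m$ and $Q = \sum_{m \geq 0} Q(B_m) Q_m$ with $P_m = P|_{B_m}/P(B_m)$ and $Q_m = Q|_{B_m}/Q(B_m)$, blocks of zero mass contributing nothing by convention. The coupling is assembled as a ``matched'' diagonal piece on each $B_m \times B_m$ together with a ``mismatch'' piece that redistributes the leftover mass between distinct blocks.

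For the matched piece, on $B_m \times B_m$ I place the measure $(P(B_m) \wedge Q(B_m))\, \gamma_m^\ast$, where $\gamma_m^\ast$ is an optimal coupling of $P_m$ and $Q_m$. For the mismatch, set $\alpha_m = P(B_m) - Q(B_m)$; since $\sum_m \alpha_m = 0$ one has $\sum_m \alpha_m^+ = \sum_m \alpha_m^-$, hence there exist nonnegative weights $(\beta_{m, m'})$ with $\sum_{m'} \beta_{m, m'} = \alpha_m^+$ and $\sum_{m} \beta_{m, m'} = \alpha_{m'}^-$ (for example $\beta_{m, m'} = \alpha_m^+ \alpha_{m'}^-/\sum_k \alpha_k^+$, which is automatically supported on $m \neq m'$). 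On $B_m \times B_{m'}$ place the product measure $\beta_{m, m'}\, P_m \otimes Q_{m'}$. A direct marginal check confirms that $\pi \in \scrC(P, Q)$: on $B_m$, the $P$-marginal contribution from the diagonal is $(P(B_m) \wedge Q(B_m)) P_m$ and from the off-diagonal is $\alpha_m^+ P_m$, summing to $P(B_m) P_m = P|_{B_m}$; the $Q$-marginal follows symmetrically.

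The cost splits accordingly. The matched part on $B_m$ equals $(P(B_m) \wedge Q(B_m))\, W_p^p(P_m, Q_m)$. The dilation $x \mapsto x/2^m$ maps $B_m$ into $(-1,1]$, scales Euclidean distance by $2^{-m}$, and pushes $P_m$ forward to $\cR_{B_m} P$ (similarly for $Q$), whence $W_p^p(P_m, Q_m) = 2^{mp}\, W_p^p(\cR_{B_m} P, \cR_{B_m} Q)$. For the mismatch, $B_m \subset [-2^m, 2^m]$ gives, for any $(x, y) \in B_m \times B_{m'}$, the bound $|x - y|^p \leq 2^{p-1}(|x|^p + |y|^p) \leq 2^{p-1}(2^{mp} + 2^{m'p})$, so that
\[
\sum_{m \neq m'} 2^{p-1}(2^{mp} + 2^{m'p}) \beta_{m, m'} = 2^{p-1} \sum_m 2^{mp}(\alpha_m^+ + \alpha_m^-) = 2^{p-1} \sum_{m \geq 0} 2^{mp}\, |P(B_m) - Q(B_m)|
\]
by swapping the order of summation and using the row/column sum identities for $\beta$. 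Adding the matched and mismatch contributions yields the stated bound.

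The only substantive step is identifying the right coupling; once the mass is split into matched and unmatched parts, the constant $2^{p-1}$ emerges naturally from the elementary inequality $|x-y|^p \leq 2^{p-1}(|x|^p + |y|^p)$ together with the diameter bound $|x| \leq 2^m$ on $B_m$. The remaining details are routine bookkeeping: the zero-mass convention takes care of empty blocks, and if the right-hand side is infinite the inequality is vacuous.
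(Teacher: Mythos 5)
Your proof is correct, and it essentially reconstructs the coupling argument that the paper defers to: the paper's own ``proof'' of Lemma~\ref{lem:W1-unbounded} is only a pointer to Fournier and Guillin (2015), pp.~714--715, and the construction you give (split each $P$, $Q$ along the annuli, match $(P(B_m)\wedge Q(B_m))$ of mass on the diagonal via an optimal coupling of the normalized restrictions, redistribute the surplus $\alpha_m^\pm$ off-diagonally with any nonnegative matrix $\beta$ having the right row/column sums, and bound the off-diagonal cost by $2^{p-1}(2^{mp}+2^{m'p})$) is exactly the one used there. The two quantitative inputs --- the scaling identity $W_p^p(P_m,Q_m)=2^{mp}\,W_p^p(\cR_{B_m}P,\cR_{B_m}Q)$ and the convexity bound $|x-y|^p\le 2^{p-1}(|x|^p+|y|^p)$ --- are also the same. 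Your observation that $\beta_{m,m'}=\alpha_m^+\alpha_{m'}^-/\sum_k\alpha_k^+$ is automatically supported off the diagonal (because $\alpha_m^+\alpha_m^-=0$) is a nice touch; the marginal check and the Fubini swap are both valid since all terms are nonnegative. In short, this is a correct, self-contained rendering of the cited proof rather than a genuinely different route.
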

\begin{proof}
The proof is explicitly given in \cite{fournier2015rate} (pp. 714--715).
\end{proof}

\subsection{Proof of Theorem \ref{thm:moment}}

Since the KL condition \eqref{eq:KL-support-n} holds, the posterior distribution is consistent with respect to the \Levy-Prokhorov metric $d_P$, see Theorem 6.25 of \cite{ghosal2017fundamentals}.
Therefore, there exists a real sequence $\epsilon_{1n} \downarrow 0$ such that 
\bean
	\Pi\Big(d_P(P, P_0) > \epsilon_{1n} \mid X_1, \ldots, X_n\Big) \longrightarrow 0
	\quad \text{in probability.}
\eean
To see this, let $N_0=1$, and for every $m \geq 1$, choose $N_m > N_{m-1}$ such that
\bean
	\E \left[\Pi\Big(d_P(P, P_0) > \frac{1}{m+1} \mid X_1, \ldots, X_n\Big) \right] \leq \frac{1}{m+1}
	\quad \text{for every $n \geq N_m$}.
\eean
Define $\epsilon_{1n} = (m+1)^{-1}$ if $N_m \leq n < N_{m+1}$.
Then, $\epsilon_{1n} \rightarrow 0$ and for $N_m \leq n < N_{m+1}$, we have
\bean
	\E \left[\Pi\Big(d_P(P, P_0) > \epsilon_{1n} \mid X_1, \ldots, X_n\Big)\right] \leq \frac{1}{m+1} \longrightarrow 0
\eean
as $n \rightarrow \infty$.

Now, suppose that \eqref{eq:moment-consistency} holds.
Then, in a similar way, we can construct a sequence $\epsilon_{2n} \downarrow 0$ such that
\bean
	\Pi\Big( |M_p(P) - M_p(P_0) | > \epsilon_{2n} \mid X_1, \ldots, X_n\Big) \longrightarrow 0
	\quad \text{in probability.}
\eean
Let
\bean
	\cF_n = \Big\{P \in \cF: d_P(P, P_0) \leq \epsilon_{1n}, ~ |M_p(P) - M_p(P_0)| \leq \epsilon_{2n} \Big\}
\eean
and $P_n \in \cF_n$ such that
\bean
	W_p(P_n, P_0) \geq \sup_{P\in \cF_n} W_p(P, P_0) - \epsilon_{1n}.
\eean
Note that $(P_n)$ is a non-random sequence of probability measures such that $d_P(P_n, P_0) \rightarrow 0$ and $M_p(P_n) \rightarrow M_p(P_0)$.
It follows that $W_p(P_n, P_0) \rightarrow 0$. 
Since $\Pi(\cF_n \mid X_1, \ldots, X_n) \rightarrow 1$ in probability, we conclude that \eqref{eq:consistency-W} holds.

Conversely, suppose that \eqref{eq:consistency-W} holds.
Then, similarly as before, we can construct a sequence $\epsilon_{3n} \downarrow 0$ such that
\bean
	\E \left[\Pi\Big(W_p(P, P_0) > \epsilon_{3n} \mid X_1, \ldots, X_n\Big) \right] \longrightarrow 0.
\eean
Let
\bean
	\cF_n' = \Big\{P\in\cF: W_p(P, P_0) \leq \epsilon_{3n} \Big\}.
\eean
and $P_n' \in \cF_n'$ such that
\bean
	|M_p(P_n') - M_p(P_0)| \geq \sup_{P \in \cF_n'} |M_p(P) - M_p(P_0)| - \epsilon_{3n}.
\eean
Again, $(P_n')$ is a non-random sequence with $W_p(P_n', P_0) \rightarrow 0$, so we have $|M_p(P_n') - M_p(P_0)| \rightarrow 0$.
Since $\Pi(\cF_n' \mid X_1, \ldots, X_n) \rightarrow 1$ in probability, we conclude that \eqref{eq:moment-consistency} holds.

\qed

\subsection{Proof of Theorem \ref{thm:consistency}}

We first provide a simple proof relying on a stronger moment condition than the one in the statement of Theorem \ref{thm:consistency}.
For this, we assume that $M_{2p}(P_0) \leq K$ and 
  $$\Pi\Big(P:\, M_{2p}(P)\leq K \mid X_1,\ldots,X_n\Big)
  \to 1\quad\text{in probability.}$$

In view of the characterization of posterior consistency in the Wasserstein distance of Theorem \ref{thm:moment}, we will establish that 
  $
  \Pi \Big(\cC_j  \mid 
  X_1, \ldots, X_n \Big) \rightarrow 0$
in probability for $j=1,2$, where $\cC_1$ and $\cC_2$ are the following two convex sets
\bean
	\cC_1 &=& \left\{P:\, M_p(P)-M_p(P_0) >\epsilon,\quad M_{2p}(P) \leq K\right\}
	\\
	\cC_2 &=& \left\{P:\, M_p(P_0)-M_p(P) >\epsilon,\quad M_{2p}(P)\leq K\right\}
\eean
To this aim, it suffices to show that $\inf_{P\in \cC_1} H(P_0,P)>0$ and $\inf_{P\in \cC_2} H(P_0,P)>0$.
For $P \in \cC_1 \cup \cC_2$, we have
\bean
	|M_p(P) - M_p(P_0)|^2
	&=& \left|\int |x|^p\, \Big(\sqrt{p(x)/p_0(x)}+1\Big)\, \Big(1-\sqrt{p(x)/p_0(x)}\Big)\,p_0(x) dx \right|^2
	\\
	&\leq& H^2(P_0,P)\,\,\int |x|^{2p}\,\Big(1+\sqrt{p(x)/p_0(x)} \Big)^2\,p_0(x) dx
\eean
by the Cauchy--Schwarz inequality.
The integral of the right term is itself upper bounded by
$$K+2\int |x|^{2p}\,\sqrt{p(x)\,p_0(x)} dx+K\leq 4K$$
by virtue of $\sqrt{p\,p_0}\leq \half (p+p_0)$.
Hence, we get $H(P_0,P)\geq \epsilon/(2\sqrt{K})$ for $P \in \cC_1 \cup \cC_2$.
\qed

\medskip
Now, we will prove Theorem \ref{thm:consistency} without the moment condition of order $2p$.

\begin{lemma} \label{lem:W-set1}
For positive constants $\epsilon, \delta$ and $K$ assume that
\be\label{eq:condition-B1}
	P(B_m) + Q(B_m) \leq K 2^{-(p+\delta)m} \quad \text{for $m \geq 0$},
\ee
and
\be \label{eq:condition-F1}
	\Big| P(\pi_m^{-1}(F) \cap B_m) - Q(\pi_m^{-1}(F) \cap B_m) \Big| \leq \epsilon
	\quad \text{for $m \leq M, F \in \cP_l, l \leq L$,}
\ee
where $M$ and $L$ are positive integers.
Then, 
\bean
	W_p^p(P, Q) \leq K' \left[ 2^{-\delta M} + 2^{-Lp} + 2^{Mp} L \epsilon \right],
\eean
where $K'$ is a constant depending only on $\delta, K$ and $p$.
\end{lemma}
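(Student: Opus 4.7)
\medskip
\textbf{Proof plan for Lemma \ref{lem:W-set1}.}

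The plan is to apply Lemma \ref{lem:W1-unbounded} as the starting point, which decomposes $W_p^p(P,Q)$ into contributions from the shells $B_m$, and then treat the ``tail'' indices $m>M$ by the tail assumption \eqref{eq:condition-B1} and the ``body'' indices $m\leq M$ by Lemma \ref{lem:D-compact} applied to the rescaled probabilities $\cR_{B_m}P, \cR_{B_m}Q$ on $(-1,1]$. Concretely, I would write
\begin{equation*}
W_p^p(P,Q)\;\leq\;\sum_{m\leq M}2^{mp}\Bigl[2^{p-1}|P(B_m)-Q(B_m)|+(P(B_m)\wedge Q(B_m))W_p^p(\cR_{B_m}P,\cR_{B_m}Q)\Bigr]+\text{(tail)}.
\end{equation*}

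For the tail $m>M$, since both $\cR_{B_m}P$ and $\cR_{B_m}Q$ live on $(-1,1]$, the trivial bound $W_p^p(\cR_{B_m}P,\cR_{B_m}Q)\leq 2^p$ applies, and combining with $|P(B_m)-Q(B_m)|\leq P(B_m)+Q(B_m)\leq K2^{-(p+\delta)m}$ from \eqref{eq:condition-B1}, each tail summand is bounded by a constant multiple of $2^{-\delta m}$. Summing the geometric series from $m=M+1$ onward yields the $2^{-\delta M}$ term.

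For the body $m\leq M$, Lemma \ref{lem:D-compact} gives
\begin{equation*}
W_p^p(\cR_{B_m}P,\cR_{B_m}Q)\;\leq\;\kappa_p\Bigl(\sum_{l=1}^L 2^{-lp}\sum_{F\in\cP_l}|\cR_{B_m}P(F)-\cR_{B_m}Q(F)|+2^{-Lp}\Bigr).
\end{equation*}
The key technical step is to translate the hypothesis \eqref{eq:condition-F1}, which controls \emph{unnormalized} masses, into a bound on the normalized differences. WLOG assuming $P(B_m)\leq Q(B_m)$, the algebraic identity
\begin{equation*}
P(B_m)\bigl[\cR_{B_m}P(F)-\cR_{B_m}Q(F)\bigr]=\bigl[P(\pi_m^{-1}(F)\cap B_m)-Q(\pi_m^{-1}(F)\cap B_m)\bigr]+\frac{Q(\pi_m^{-1}(F)\cap B_m)}{Q(B_m)}\bigl[Q(B_m)-P(B_m)\bigr]
\end{equation*}
lets me bound $(P(B_m)\wedge Q(B_m))|\cR_{B_m}P(F)-\cR_{B_m}Q(F)|\leq\epsilon+\frac{Q(\pi_m^{-1}(F)\cap B_m)}{Q(B_m)}|P(B_m)-Q(B_m)|$. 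Summing over $F\in\cP_l$ (of cardinality $2^l$) gives $\leq 2^l\epsilon+|P(B_m)-Q(B_m)|$, and the crude bound $|P(B_m)-Q(B_m)|\leq 2\epsilon$ (obtained from \eqref{eq:condition-F1} at $l=1$ with $F=(-1,0],(0,1]$) keeps everything in terms of $\epsilon$.

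Plugging this back in and using $\sum_{l=1}^L 2^{l(1-p)}\leq L$, $\sum_{l=1}^L 2^{-lp}\leq C_p$, the body summand at index $m$ is controlled by $2^{mp}\kappa_p L\epsilon$ from the partition-difference terms, plus $\kappa_p K2^{-\delta m}2^{-Lp}$ from the residual $2^{-Lp}$ term (using $(P(B_m)\wedge Q(B_m))\leq K2^{-(p+\delta)m}$ from \eqref{eq:condition-B1} once more). Summing over $m\leq M$ gives $\lesssim 2^{Mp}L\epsilon+2^{-Lp}$, which combined with the tail bound $2^{-\delta M}$ yields the stated inequality. The principal obstacle is the normalization gymnastics in step 3; all other steps are geometric-series book-keeping, and the tail hypothesis \eqref{eq:condition-B1} turns out to be used in two distinct places (for $m>M$ and for the residual $2^{-Lp}$ term when $m\leq M$).
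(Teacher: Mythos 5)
Your proof is correct and follows essentially the same route as the paper: apply Lemma \ref{lem:W1-unbounded} to split into shells, bound the tail $m>M$ via \eqref{eq:condition-B1} and the trivial bound $W_p^p\leq 2^p$, and for the body apply Lemma \ref{lem:D-compact} to the rescaled measures while converting normalized differences into unnormalized ones through the same algebraic trick (you use a symmetric variant of the paper's factorization, and bound $|P(B_m)-Q(B_m)|\leq 2\epsilon$ via $l=1$ where the paper uses $l=0$ directly — both harmless). You also correctly note that \eqref{eq:condition-B1} is invoked twice (tail and the residual $2^{-Lp}$ term), matching the paper's bookkeeping.
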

\begin{proof}
Since $W_p^p( \cR_{B_m} P,  \cR_{B_m} Q) \leq 2^p$ and \eqref{eq:condition-B1} holds, the summation  in the right hand side of \eqref{eq:W1-unbounded} over $m > M$ is bounded by $c_1 2^{-\delta M}$, where $c_1$ is a constant depending only on $\delta, K$ and $p$.
Therefore, $W_p^p(P, Q)$ is bounded by
\bean	
	\sum_{m=0}^M 2^{mp} \bigg[ 2^{p-1} |P(B_m) - Q(B_m)| +  \Big( P(B_m) \wedge Q(B_m) \Big) W_p^p( \cR_{B_m} P,  \cR_{B_m} Q) \bigg] + c_1 2^{-\delta M}
\eean
by Lemma \ref{lem:W1-unbounded}.
Note that 
\bean
	&& \Big| \cR_{B_m}P(F) - \cR_{B_m}Q(F) \Big| 
	= \left| \frac{P(\pi_m^{-1}(F) \cap B_m)}{P(B_m)} - \frac{Q(\pi_m^{-1}(F) \cap B_m)}{Q(B_m)} \right|
	\\
	&& = \frac{| P(\pi_m^{-1}(F) \cap B_m) Q(B_m) - P(B_m) Q(\pi_m^{-1}(F) \cap B_m) |}{P(B_m) Q(B_m)}
	\\
	&& \leq \frac{1}{P(B_m) Q(B_m)} \bigg[ P(\pi_m^{-1}(F) \cap B_m) \Big|P(B_m) - Q(B_m) \Big| 
	\\
	&& \qquad\qquad\qquad\qquad + P(B_m) \Big|P(\pi_m^{-1}(F) \cap B_m) - Q(\pi_m^{-1}(F) \cap B_m)\Big| \bigg].
\eean
By  Lemma \ref{lem:D-compact} and the last display, we have
\be\begin{split} \label{eq:decompose}
	& \Big( P(B_m) \wedge Q(B_m) \Big) W_p^p( \cR_{B_m} P,  \cR_{B_m} Q)
	\\
	& \leq \kappa_p \Big( P(B_m) \wedge Q(B_m) \Big) \left[ \sum_{l=1}^L 2^{-lp} \sum_{F \in \cP_l} | \cR_{B_m} P(F) - \cR_{B_m} Q(F) | + 2^{-Lp} \right]
	\\
	& \leq \kappa_p \bigg[ \sum_{l=1}^L 2^{-lp} \left\{ \Big| P(B_m) - Q(B_m) \Big| + \sum_{F \in \cP_l} \Big|P(\pi_m^{-1}(F) \cap B_m) - Q(\pi_m^{-1}(F) \cap B_m) \Big| \right\} 
	\\
	& \qquad + 2^{-Lp} \Big( P(B_m) \wedge Q(B_m) \Big) \bigg]
	\\
	& \leq \kappa_p \bigg[ \Big| P(B_m) - Q(B_m) \Big|  + 2^{-Lp} \Big( P(B_m) \wedge Q(B_m) \Big)
	\\
	& \qquad  + \sum_{l=1}^L 2^{-lp} \sum_{F \in \cP_l} \Big|P(\pi_m^{-1}(F) \cap B_m) - Q(\pi_m^{-1}(F) \cap B_m) \Big| \bigg].
\end{split}\ee
It follows  that
\bean
	W_p^p(P, Q) &\leq& c_1 2^{-\delta M} +
	 \sum_{m=0}^M 2^{mp} \bigg\{ (2^{p-1} + \kappa_p) \Big| P(B_m) - Q(B_m) \Big| + \kappa_p 2^{-Lp} \Big( P(B_m) \wedge Q(B_m) \Big) \bigg\}
	\\
	&& + \kappa_p \sum_{m=0}^M 2^{mp} \sum_{l=1}^L 2^{-lp} \sum_{F \in \cP_l}
	\Big|P(\pi_m^{-1}(F) \cap B_m) - Q(\pi_m^{-1}(F) \cap B_m) \Big| 
	\\
	&\leq& c_1 2^{-\delta M} + \epsilon (2^{p-1} + \kappa_p) \sum_{m=0}^M 2^{mp} + K \kappa_p 2^{-Lp} \sum_{m=0}^M 2^{-\delta m} 
	+ \kappa_p \epsilon \sum_{m=0}^M 2^{mp} \sum_{l=1}^L  2^{-(p-1)l}
	\\
	&\leq& c_2 \Big(2^{-\delta M} + 2^{Mp} \epsilon + 2^{-Lp} + 2^{Mp} L \epsilon \Big),
\eean
where the second inequality holds by \eqref{eq:condition-B1}, \eqref{eq:condition-F1} and that the cardinality of $\cP_l$ is $2^l$.
Here, $c_2$ is a constant depending only on $\delta, K$ and $p$.
\end{proof}

By \eqref{eq:moment-prob}, we have that
\bean
	P_0(B_m) \leq 2^{p+\delta} K 2^{-(p+\delta)m}
	\quad \text{for $m \geq 0$}.
\eean
and $\Pi(\cF_0 \mid X_1, \ldots, X_n) \rightarrow 1$ in probability, where
\bean
	\cF_0 = \Big\{P: P(B_m) \leq 2^{p+\delta} K 2^{-(p+\delta)m} ~ \text{for all $m \geq 0$} \Big\}.
\eean
Suppose that a sufficiently small $\epsilon > 0$ is given.
We will prove that for some function $g: (0,\infty) \rightarrow (0, \infty)$, with $g(\epsilon) \downarrow 0$ as $\epsilon \downarrow 0$, 
\be\label{eq:g-consistency}
	\Pi\bigg( W_p^p(P, P_0) \geq g(\epsilon) ~\Big|~ X_1, \ldots, X_n\bigg) \longrightarrow 0
	\quad \text{in probability}.
\ee
Let $L$ and $M$ be the largest integer less than or equal to $\log_2 \epsilon^{-1}$ and $(\log_2 \epsilon^{-1})/(2p)$, respectively.
Then,
\be \label{eq:g-eps}
	2^{-\delta M} + 2^{-Lp} + 2^{Mp} L \epsilon
	\leq 2^\delta \epsilon^{\delta/(2p)} + 2^p \epsilon^p + \sqrt{\epsilon} \log_2 \epsilon^{-1}.
\ee
Let
\bean
	\cF_{m,F,+} &=& \Big\{ P : P(\pi_m^{-1}(F) \cap B_m) - P_0 (\pi_m^{-1}(F) \cap B_m)  < \epsilon \Big\}
	\\
	\cF_{m,F,-} &=& \Big\{ P :  P_0(\pi_m^{-1}(F) \cap B_m) - P (\pi_m^{-1}(F) \cap B_m)  < \epsilon \Big\}.
\eean
Then, by Lemma \ref{lem:W-set1} and \eqref{eq:g-eps}, there exists a constant $c_1$, depending only on $\delta, K$ and $p$, such that
\bean
	P \in \cF_0
	\quad \text{and} \quad
	P \in \bigcap_{m\leq M} \bigcap_{l \leq L} \bigcap_{F \in \cP_l} ( \cF_{m,F,+} \cap\cF_{m,F,-} ) \equiv \cF_\epsilon
\eean
implies that 
\bean
	W_p^p(P, P_0) \leq c_1 \Big(2^\delta \epsilon^{\delta/(2p)} + 2^p \epsilon^p + \sqrt{\epsilon} \log_2 \epsilon^{-1} \Big) \equiv g(\epsilon).
\eean
Certainly, $g(\epsilon) \downarrow 0$ as $\epsilon \downarrow 0$.
Since $\Pi(\cF_0^c \mid X_1, \ldots, X_n) \rightarrow 0$ in probability and the KL condition \eqref{eq:KL-support-n} holds, by Schwartz's theorem (see Theorem 6.25 of \cite{ghosal2017fundamentals} if $\Pi$ depends on $n$), it is sufficient for \eqref{eq:g-consistency} to construct a sequence $\phi_n$ of tests such that
\be \label{eq:test-consistency}
	P_0 \phi_n + 
	\sup_{P \in \cF_\epsilon^c} P(1-\phi_n) \leq e^{-cn} 
\ee
for some constant $c > 0$ and every large enough $n$.

Let
\bean
	\phi_{m,F,+} &=& \left\{\begin{array}{ll}
		1 & ~~\text{if $\bbP_n(\pi_m^{-1}(F) \cap B_m) - P_0(\pi_m^{-1}(F) \cap B_m) > \epsilon / 2$}
		\\
		0 & ~~\text{otherwise}
	\end{array}\right.
	\\
	\phi_{m,F,-} &=& \left\{\begin{array}{ll}
		1 & ~~\text{if $\bbP_n(\pi_m^{-1}(F) \cap B_m) - P_0(\pi_m^{-1}(F) \cap B_m) < - \epsilon / 2 $}
		\\
		0 & ~~\text{otherwise}.
	\end{array}\right.
\eean
Then, by the Hoeffding's inequality, 
\bean
	(P_0 \phi_{m,F,+}) \vee (P_0 \phi_{m,F,-})
	\leq e^{-n\epsilon^2/2}.
\eean
Also, for $P \in \cF^c_{m,F,+}$,
\bean
	P(1 - \phi_{m,F,+}) \leq P\Big( \bbP_n(\pi_m^{-1}(F) \cap B_m) - P(\pi_m^{-1}(F) \cap B_m) \leq -\epsilon/2 \Big) \leq e^{-n\epsilon^2/2}
\eean
by the Hoeffding's inequality.
Similarly, for $P \in \cF^c_{m,F,-}$,
\bean
	P(1 - \phi_{m,F,-}) \leq e^{-n\epsilon^2/2}.
\eean
Therefore, if we define
\bean
	\phi_n = \max_{m \leq M} \max_{l \leq L} \max_{F\in\cP_l} (\phi_{m,F,+} \vee \phi_{m,F,-}),
\eean
then, 
\bean
	P_0 \phi_n \leq \sum_{m \leq M} \sum_{l \leq L} \sum_{F \in \cP_l} P_0 (\phi_{m,F,+} + \phi_{m,F,-})
	\leq 2^{L+1} (L+1) (M+1) e^{-n\epsilon^2/2}.
\eean
Since $L, M$ and $\epsilon$ does not depend on $n$, $\phi_n$ satisfies  \eqref{eq:test-consistency} for some $c > 0$ and large enough $n$, which completes the proof.
\qed

\subsection{Proof of Theorem \ref{thm:moment-rate}}

For a given sequence $\delta_n$, let
\bean
	\cC_{n,1} &=& \{ p: M_p(P) - M_p(P_0) > \delta_n, M_{2p}(P) \leq K \}
	\\
	\cC_{n,2} &=& \{ p: M_p(P_0) - M_p(P) > \delta_n, M_{2p}(P) \leq K \}
\eean
and $\cC_n = \cC_{n,1} \cup \cC_{n,2}$.
Then, it can be shown that
\bean
	\inf_{p \in \cC_n} H^2(p, p_0) \geq \frac{\delta_n^2}{4K}
\eean
as in the first proof of Theorem \ref{thm:consistency}.
For any measurable set $\cC$, let $\Pi_n^\cC$ be the posterior distribution restricted and renormalized onto $\cC$, that is,
\bean
	\Pi_n^\cC(A) = \frac{1}{L_n(\cC)} \int_{A} \prod_{i=1}^n \frac{p}{p_0}(X_i) d\Pi(p)
	\quad \text{for all measurable $A\subset \cC$}
\eean
and let $\bar p_n^\cC(x) = \int_\cC p(x) d\Pi_n^\cC(p)$, where
\bean
	L_n(\cC) = \int_\cC \prod_{i=1}^n \frac{p}{p_0}(X_i) d\Pi(p).
\eean
Since
\bean
	\frac{L_n(\cC)}{L_{n-1}(\cC)} = \frac{\bar p_{n-1}^{\cC}}{p_0}(X_n),
\eean
we have
\bean
	\E\Big[ L_n^{1/2}(\cC) \mid \cG_{n-1} \Big] = L_{n-1}^{1/2}(\cC) \Big( 1 - \half H^2 (p_0, \bar p_{n-1}^{\cC}) \Big),
\eean
where $\cG_{n-1}$ is the $\sigma$-algebra generated by $X_1, \ldots, X_{n-1}$.
Since $\cC_{n,j}$ is convex, we have $H^2 (p_0, \bar p_{n-1}^{\cC_{n,j}}) \geq \delta_n^2/(4K)$ for $j=1, 2$.
Therefore, 
\bean
	\E L_n^{1/2} (\cC_{n,j}) \leq \left(1 - \frac{\delta_n^2}{8K} \right)^n
	\leq e^{-c_1 n \delta_n^2}
\eean
for all large enough $n$, where $c_1>0$ is a constant depending only on $K$.
It follows that $L_n(\cC_{n,j})$ is upper bounded by $e^{-c_2 n\delta_n^2}$ with probability tending to 1 for some constant $c_2$.
Thus, if we take $\delta_n = K' \epsilon_n$ for large enough $K'$, the proof is complete.

\subsection{Proof of Theorem \ref{thm:Wp-rate}}

\begin{lemma} \label{lem:W-set}
For positive constants $\alpha, \delta, \epsilon$ and $K$, suppose that
\be\label{eq:condition-B}
	P(B_m) + Q(B_m) \leq K 2^{-(2p+\delta)m} \quad \text{for $m \geq 0$},
\ee
and
\be \label{eq:condition-F}
	\Big| P(\pi_m^{-1}(F) \cap B_m) - Q(\pi_m^{-1}(F) \cap B_m) \Big| \leq \frac{K (2+\alpha)^{-mp}  \epsilon}{(l+1)^2}
	\quad \text{for $m \leq M $, $F \in \cP_l$, $l \leq L$.}
\ee
 Then,
\bean
	W_p^p(P, Q) \leq K' \left[2^{-Lp} + \epsilon + 2^{-(p+\delta)M} \right],
\eean
where $K'$ is a constant depending only on $\alpha, K$ and $p$.
If $p > 1$, condition \eqref{eq:condition-F} can be replaced by a slightly weaker condition that
\be\label{eq:condition-F2}
	\Big| P(\pi_m^{-1}(F) \cap B_m) - Q(\pi_m^{-1}(F) \cap B_m) \Big| \leq K (2+\alpha)^{-mp}  \epsilon
	\quad \text{for $m \leq M $, $F \in \cP_l$, $l \leq L$.}
\ee
\end{lemma}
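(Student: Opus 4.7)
The plan is to combine Lemmas \ref{lem:W1-unbounded} and \ref{lem:D-compact} in the same spirit as the proof of Lemma \ref{lem:W-set1}, but exploiting the sharper $m$-dependent decay built into the refined conditions \eqref{eq:condition-B} and \eqref{eq:condition-F}. First I would apply Lemma \ref{lem:W1-unbounded} to write $W_p^p(P,Q)$ as the sum over $m\geq 0$ of two pieces, a tail-mass contribution $2^{p-1}\cdot 2^{mp}|P(B_m)-Q(B_m)|$ and a rescaled inner contribution $2^{mp}\bigl(P(B_m)\wedge Q(B_m)\bigr)W_p^p(\cR_{B_m}P,\cR_{B_m}Q)$. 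I then split the sum at $m=M$. For the tail $m>M$, only \eqref{eq:condition-B} is used, together with the crude bound $W_p^p(\cR_{B_m}P,\cR_{B_m}Q)\leq 2^p$ valid on $(-1,1]$; this yields a geometric series $\sum_{m>M}2^{-(p+\delta)m}\lesssim 2^{-(p+\delta)M}$, accounting for the last term in the conclusion.

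For $m\leq M$, I apply Lemma \ref{lem:D-compact} to each inner $W_p^p(\cR_{B_m}P,\cR_{B_m}Q)$ and then use exactly the algebraic manipulation displayed in \eqref{eq:decompose} (from the proof of Lemma \ref{lem:W-set1}) to re-express the summands in $\cR_{B_m}$-form as quantities involving $|P(B_m)-Q(B_m)|$ and $|P(\pi_m^{-1}(F)\cap B_m)-Q(\pi_m^{-1}(F)\cap B_m)|$. This produces three categories of terms: (i) $2^{mp}|P(B_m)-Q(B_m)|$; (ii) $2^{mp}\cdot 2^{-Lp}(P(B_m)\wedge Q(B_m))$; and (iii) $2^{mp}\cdot 2^{-lp}\sum_{F\in\cP_l}|P(\pi_m^{-1}(F)\cap B_m)-Q(\pi_m^{-1}(F)\cap B_m)|$ for $l=1,\dots,L$. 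For (i), I apply \eqref{eq:condition-F} at the coarsest level $F=(-1,1]\in\cP_0$ (noting $\pi_m^{-1}((-1,1])\cap B_m=B_m$) to get $|P(B_m)-Q(B_m)|\leq K(2+\alpha)^{-mp}\epsilon$; the geometric series $\sum_m\bigl(2/(2+\alpha)\bigr)^{mp}$ then bounds (i) by a constant multiple of $\epsilon$. For (ii), I use \eqref{eq:condition-B} so that $\sum_m 2^{mp}\cdot 2^{-(2p+\delta)m}$ is summable, giving a constant times $2^{-Lp}$. For (iii), \eqref{eq:condition-F} together with $|\cP_l|=2^l$ yields $\sum_{F\in\cP_l}|\,\cdot\,|\leq 2^l K(2+\alpha)^{-mp}\epsilon/(l+1)^2$, and the resulting double sum factorizes into the same geometric $m$-sum and the convergent series $\sum_{l\geq 1}2^{-(p-1)l}/(l+1)^2$, bounding (iii) by a constant times $\epsilon$.

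Combining the three pieces over $m\leq M$ with the tail bound gives $W_p^p(P,Q)\lesssim \epsilon+2^{-Lp}+2^{-(p+\delta)M}$, as required. For $p>1$ under the weaker hypothesis \eqref{eq:condition-F2}, the argument is unchanged except that the $l$-sum in (iii) becomes $\sum_{l\geq 1}2^{-(p-1)l}$, which is itself geometric and therefore summable, so the polynomial factor $(l+1)^{-2}$ becomes unnecessary. The main obstacle is the bookkeeping in step (i): one must recognize that applying \eqref{eq:condition-F} at $l=0$ (rather than merely exploiting \eqref{eq:condition-B}) is what produces the \emph{linear-in-$\epsilon$} bound, and that the sharpened decay rate $(2+\alpha)^{-mp}$—strictly faster than $2^{-mp}$—is precisely what cancels the $2^{mp}$ amplification from Lemma \ref{lem:W1-unbounded} so that no $2^{Mp}$ blow-up appears, in stark contrast to Lemma \ref{lem:W-set1}.
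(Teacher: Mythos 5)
Your proposal is correct and follows essentially the same route as the paper: Lemma \ref{lem:W1-unbounded} with the split at $m=M$, the tail bound via \eqref{eq:condition-B} with $W_p^p(\cR_{B_m}P,\cR_{B_m}Q)\leq 2^p$, and then Lemma \ref{lem:D-compact} combined with the algebraic rewriting of display \eqref{eq:decompose}, applying \eqref{eq:condition-F} at $l=0$ (i.e.\ $F=B_0=(-1,1]$) to control $|P(B_m)-Q(B_m)|$, and using $|\cP_l|=2^l$ with the $(l+1)^{-2}$ weights (or the geometric decay $2^{-(p-1)l}$ when $p>1$) for the inner partition terms. Your concluding observation — that the strict gap between $(2+\alpha)^{-mp}$ and $2^{-mp}$ is what makes the $m$-sum converge and avoids the $2^{Mp}$ blow-up of Lemma \ref{lem:W-set1} — correctly identifies the key structural difference from the earlier lemma.
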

\begin{proof}
By \eqref{eq:condition-B} and that $W_p^p( \cR_{B_m} P,  \cR_{B_m} Q) \leq 2^p$, the summation in the right hand side of \eqref{eq:W1-unbounded} over $m > M$ is bounded by a constant multiple of $2^{-(p+\delta)M}$.
Since $B_0 \in \cP_0$,
\bean
	\sum_{m=0}^M 2^{mp} \big| P(B_m) - Q(B_m)\big| 
	&=& \sum_{m=0}^M 2^{mp} \big| P(\pi_m^{-1}(B_0) \cap B_m) - Q(P(\pi_m^{-1}(B_0) \cap B_m) \big| 
	\\
	&\leq& K\epsilon \sum_{m=0}^M \Big( \frac{2}{2+\alpha} \Big)^{mp}
	= K\epsilon \frac{1-(1+\alpha/2)^{-(M+1)p}}{1-(1+\alpha/2)^{-p}},
\eean
where the inequality holds by \eqref{eq:condition-F} with $l=0$.
Therefore, 
\bean
	W_p^p(P, Q) 
	&\leq& \sum_{m=0}^M 2^{mp} \Big( P(B_m) \wedge Q(B_m) \Big) W_p^p( \cR_{B_m} P,  \cR_{B_m} Q) + K' \Big(\epsilon + 2^{-(p+\delta)M} \Big)
\eean
by Lemma \ref{lem:W1-unbounded}, where $K'$ is a constant depending only on $\alpha, K$ and $p$.
By \eqref{eq:decompose}, the summation in the last display is bounded by
\be \begin{split} \label{eq:tech-l}
	\kappa_p \sum_{m=0}^M 2^{mp}
	& \bigg[ \Big| P(B_m) - Q(B_m) \Big| 
	 + 2^{-Lp} \Big( P(B_m) \wedge Q(B_m) \Big)
	 \\	
	& + \sum_{l=1}^L 2^{-lp} \sum_{F \in \cP_l} \Big|P(\pi_m^{-1}(F) \cap B_m) - Q(\pi_m^{-1}(F) \cap B_m) \Big| \bigg]
\end{split} \ee
Since the cardinality of $\cP_l$ is $2^l$ and $\sum_{l=1}^\infty (l+1)^{-2} < \infty$, the first assertion follows from \eqref{eq:condition-B} and \eqref{eq:condition-F}.

If $p > 1$ and \eqref{eq:condition-F} is replaced by \eqref{eq:condition-F2}, we have
\bean
	&& \sum_{l=1}^L 2^{-lp} \sum_{F \in \cP_l} \Big|P(\pi_m^{-1}(F) \cap B_m) - Q(\pi_m^{-1}(F) \cap B_m) \Big|
	\\
	&& \leq \sum_{l=1}^L K 2^{-l(p-1)} (2+\alpha)^{-mp} \epsilon
	\leq \frac{K}{2^{p-1} -1} (2+\alpha)^{-mp} \epsilon.
\eean
Therefore, we have the same conclusion with a different constant $K'$.
\end{proof}

\begin{lemma} \label{lem:binomial-concentration}
If $X_1, \ldots, X_n \iidsim P$, then
\bean
	P(\bbP_n(B) \leq P(B) - \epsilon) &\leq& \exp\left( -\frac{n\epsilon^2}{2P(B)} \right)
	\\
	P(\bbP_n(B) \geq P(B) + \epsilon) &\leq& \exp \left( -\frac{n\epsilon^2}{2\{P(B) + \epsilon/3\}} \right)
\eean
for every $n \geq 1$ and $\epsilon \geq 0$.
\end{lemma}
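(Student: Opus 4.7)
The key observation is that $n\mathbb{P}_n(B) = \sum_{i=1}^n \mathds{1}_B(X_i)$ is a sum of i.i.d.\ Bernoulli random variables with parameter $p := P(B)$, so $n\mathbb{P}_n(B) \sim \text{Binomial}(n,p)$. Both stated inequalities are therefore Chernoff-type tail bounds for a binomial. My plan is to derive them through Markov's inequality applied to the moment generating function, choosing the optimizer in two slightly different ways for the two tails.

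For the upper tail, I would use the Bernstein form of the bound. Set $Y_i = \mathds{1}_B(X_i) - p$, so $|Y_i| \leq 1$ and $\Var(Y_i) = p(1-p) \leq p$. The standard Bernstein inequality gives
\[
P\Big(\sum_{i=1}^n Y_i \geq n\epsilon\Big) \leq \exp\!\left(-\frac{(n\epsilon)^2}{2\{np + n\epsilon/3\}}\right) = \exp\!\left(-\frac{n\epsilon^2}{2\{p + \epsilon/3\}}\right),
\]
which is the desired upper-tail inequality. This is proved by the usual exponential-Markov route: bound the MGF of each $Y_i$ via $\log Ee^{tY_i} \leq p(e^t - 1 - t) \leq t^2p/(2(1-t/3))$ for $0 < t < 3$, multiply by $n$, use Markov, and optimize in $t$.

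For the lower tail, the stated bound $\exp(-n\epsilon^2/(2p))$ is strictly tighter than what Bernstein gives, so I would instead invoke the multiplicative Chernoff bound for binomials, namely $P(X \leq (1-\delta)\mu) \leq \exp(-\mu\delta^2/2)$ for $0<\delta<1$ where $X \sim \text{Bin}(n,p)$ and $\mu = np$. Applying this with $\delta = \epsilon/p$ (which is automatic when $0 \leq \epsilon \leq p$; for $\epsilon > p$ the probability is zero since $\mathbb{P}_n(B) \geq 0$) yields exactly $\exp(-n\epsilon^2/(2p))$. The underlying computation bounds $Ee^{-t\mathds{1}_B(X_i)} = 1 - p(1-e^{-t}) \leq \exp(-p(1-e^{-t}))$ and optimizes; the improvement over Bernstein in this direction comes from the fact that the one-sided Legendre transform of the Bernoulli log-MGF behaves like $\delta^2/2$ rather than $\delta^2/(2+2\delta/3)$ when one moves downward from the mean.

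There is no genuine obstacle here, as these are textbook concentration inequalities; the only care needed is to pick the sharper multiplicative Chernoff form for the lower tail (to get $2p$ instead of $2(p+\epsilon/3)$ in the denominator) while using Bernstein's form for the upper tail (to recover the $+\epsilon/3$ correction in the denominator).
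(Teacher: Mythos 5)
Your proof is correct. The paper does not prove this lemma itself; it simply cites Theorem~1 of Janson (2016), so there is no internal argument to compare against, and your self-contained Chernoff derivation fills that gap with what is almost certainly the same underlying exponential-moment technique. Both tails are handled correctly: the Bernstein MGF bound $\log \E e^{tY_i} \le p(e^t-1-t) \le pt^2/(2(1-t/3))$ together with $\Var(Y_i)\le p$ yields the $p+\epsilon/3$ denominator for the upper tail, and the multiplicative Chernoff form $P(X\le(1-\delta)\mu)\le e^{-\mu\delta^2/2}$ with $\delta=\epsilon/p$ yields the lower tail, with the case $\epsilon>p$ correctly dismissed as vacuous. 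One small refinement of your aside: symmetric Bernstein with $|Y_i|\le1$ indeed does not recover the lower-tail exponent $-n\epsilon^2/(2p)$, but the one-sided Bernstein/Bennett bound using only $p-\mathds{1}_B(X_i)\le p$ gives $\exp(-n\epsilon^2/(2p(1-p+\epsilon/3)))$, which is at least as sharp as $\exp(-n\epsilon^2/(2p))$ on the only nontrivial range $\epsilon\le p$; so the asymmetry between the two displayed bounds reflects the one-sidedness of the Bernoulli increment rather than a genuine limitation of Bernstein-type arguments, and both tails could be dispatched by a single such argument if preferred.
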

\begin{proof}
See Theorem 1 of \cite{janson2016large}.
\end{proof}

\medskip
Before proving Theorem \ref{thm:Wp-rate}, we state and prove a similar one.
Theorem \ref{thm:Wp-rate-general} below is devised for eliminating the logarithmic term $\log \epsilon_n^{-1}$ in Theorem \ref{thm:Wp-rate}.
Proofs of Theorem \ref{thm:Wp-rate} and \ref{thm:Wp-rate-general} are quite similar, so we do not provide all details to avoid the repetition.
We provide a detailed proof only for Theorem \ref{thm:Wp-rate-general} because this contains the most technical part caused by the factors $(l+1)^{-2}$ and $(l+1)^{-4}$.
These factors appear for handling the last term in \eqref{eq:tech-l}.
If $p > 1$, we need not consider these factors by the second assertion of Lemma \ref{lem:W-set}.
For $p=1$, we can avoid the technical factors $(l+1)^{-2}$ and $(l+1)^{-4}$, with an additional logarithmic factor in the rate.
If we want to eliminate the term $\log \epsilon_n^{-1}$, the statement would become more complicated as Theorem \ref{thm:Wp-rate-general}.
For conciseness we decided to include Theorem \ref{thm:Wp-rate} in the main texts rather than Theorem \ref{thm:Wp-rate-general}.

\begin{theorem} \label{thm:Wp-rate-general}
Assume that the prior $\Pi$ satisfies the KL condition \eqref{eq:KL-rate} for a sequence $\epsilon_n$ with $\epsilon_n \downarrow 0$ and $\epsilon_n \geq \sqrt{(\log n)/n}$.
Furthermore, assume that there exist positive constants $K$ and $\delta$ such that
\bean
	P_0(\pi_m^{-1}(F) \cap B_m) \leq \frac{K}{(l+1)^4} 2^{-(2p+\delta)m}
	\quad \text{for $m \geq 0, F \in \cP_l, l \geq 0$}
\eean
and $\Pi(\cF_n^c \mid X_1, \ldots, X_n) \rightarrow 0$ in probability, where
\bean
	\cF_n = \bigcap_{m \geq 0} \bigcap_{l \leq L} \bigcap_{F \in \cP_l} 
	\bigg\{ P: P(\pi_m^{-1}(F) \cap B_m) \leq \frac{K}{(l+1)^4} 2^{-(2p+\delta)m} \bigg\}
\eean
and $L$ is the largest integer less than or equal to $(\log_2 \epsilon_n^{-1})/p$.
Then, for some constant $K' > 0$, 
\be \label{eq:W-rate}
	\Pi\bigg( W_p^p(P, P_0) \geq K' \epsilon_n ~\Big|~ X_1, \ldots, X_n\bigg) \longrightarrow 0
	\quad \text{in probability}.
\ee
\end{theorem}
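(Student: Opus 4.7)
The plan is to apply Lemma \ref{lem:ghosal2000test} together with the KL condition \eqref{eq:KL-rate}. Since $\Pi(\cF_n^c \mid X_1, \ldots, X_n) \to 0$ in probability by hypothesis, it suffices to bound the posterior probability of $\{P \in \cF_n : W_p^p(P, P_0) > K'\epsilon_n\}$. For this, I would construct tests $\phi_n$ satisfying $P_0\phi_n \to 0$ and $\sup_{P \in \cF_n,\, W_p^p(P, P_0) > K'\epsilon_n} P(1 - \phi_n) \leq e^{-3n\epsilon_n^2}$. I would choose a resolution $M$ proportional to $(\log_2 \epsilon_n^{-1})/(p+\delta)$ so that $2^{-(p+\delta)M} \lesssim \epsilon_n$ (the given $L$ already ensures $2^{-Lp} \lesssim \epsilon_n$), and fix $\alpha > 0$ small enough that both $(2+\alpha)^{2p} < 2^{2p+\delta}$ and $(2+\alpha)^p < 2^{p+\delta}$ hold; both gaps are available because $\delta > 0$. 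Applying Lemma \ref{lem:W-set} with its parameter $\epsilon$ taken as a suitable constant multiple of $\epsilon_n$, any $P \in \cF_n$ whose cell-wise deviations satisfy $|P(\pi_m^{-1}(F) \cap B_m) - P_0(\pi_m^{-1}(F) \cap B_m)| \leq \tau_{m,l}$ for all $m \leq M$, $l \leq L$, $F \in \cP_l$, where $\tau_{m,l} := c(2+\alpha)^{-mp}\epsilon_n/(l+1)^2$, automatically has $W_p^p(P, P_0) \leq K'\epsilon_n$. Contrapositively, any such $P$ with $W_p^p(P, P_0) > K'\epsilon_n$ violates this deviation bound at some $(m, F, l)$ in the allowed range.

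For the tests themselves I would take $\phi_{m,F,\pm} = \mathbf{1}\{\pm[\bbP_n(\pi_m^{-1}(F) \cap B_m) - P_0(\pi_m^{-1}(F) \cap B_m)] > \tau_{m,l}/2\}$ and set $\phi_n = \max_{m,F,l,\pm} \phi_{m,F,\pm}$. Applying Lemma \ref{lem:binomial-concentration} together with the tail hypothesis $P_0(\pi_m^{-1}(F) \cap B_m) \leq K(l+1)^{-4} 2^{-(2p+\delta)m}$, the ratio $\tau_{m,l}^2 / P_0(\pi_m^{-1}(F) \cap B_m)$ is bounded below by a constant multiple of $\epsilon_n^2 \cdot [2^{2p+\delta}/(2+\alpha)^{2p}]^m$, which by the first constraint on $\alpha$ is $\gtrsim \epsilon_n^2$ uniformly in $m, l$; the $(l+1)^{-4}$ factor in the tail hypothesis exactly cancels the $(l+1)^{-4}$ coming from $\tau_{m,l}^2$. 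This handles the Bernstein regime where $P_0$ dominates $\tau_{m,l}/6$. In the complementary small-probability regime, the Bernstein bound reduces to $\exp(-c n \tau_{m,l})$, and the second constraint on $\alpha$ together with $m \leq M$, $l \leq L$ guarantees $\tau_{m,l} \gtrsim \epsilon_n^{2-\eta}/\log^2 \epsilon_n^{-1}$ for some $\eta > 0$, so $n\tau_{m,l} \gg n\epsilon_n^2$ for large $n$. Both regimes therefore give $P_0 \phi_{m,F,\pm} \leq e^{-Cn\epsilon_n^2}$ for arbitrarily large $C$, and the identical argument with $P \in \cF_n$ in place of $P_0$ (using the same tail bound inside $\cF_n$) yields $P(1 - \phi_{m,F,\pm}) \leq e^{-Cn\epsilon_n^2}$ at the witnessing cell. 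The union bound over the $O(M \cdot 2^L)$ tests costs $O(\log \epsilon_n^{-1})$, which is absorbed by $n\epsilon_n^2$ since $\epsilon_n \geq \sqrt{(\log n)/n}$.

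The main technical obstacle is the joint calibration of $\alpha$, $L$, $M$, and $\tau_{m,l}$: the thresholds must be small enough for Lemma \ref{lem:W-set} to force a detectable cell-wise deviation whenever $W_p^p(P, P_0) \gtrsim \epsilon_n$, yet large enough that Bernstein's inequality, powered by the $(l+1)^{-4}$ tail assumption, still delivers the $e^{-3n\epsilon_n^2}$ rate uniformly over cells. Two pieces of arithmetic make the bookkeeping meet: the $(l+1)^{-4}$ in the hypothesis is exactly the square of the $(l+1)^{-2}$ in $\tau_{m,l}$, so the dependence on $l$ vanishes from the Bernstein exponent; and the geometric gaps $(2+\alpha)^{2p} < 2^{2p+\delta}$ and $(2+\alpha)^p < 2^{p+\delta}$ ensure that the exponent beats $n\epsilon_n^2$ in both Bernstein regimes.
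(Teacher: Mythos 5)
Your proposal is correct and follows essentially the same route as the paper's own proof: it applies Lemma \ref{lem:ghosal2000test} after noting the posterior already concentrates on $\cF_n$, sets $M\sim(\log_2\epsilon_n^{-1})/(p+\delta)$ and the same small $\alpha$ with $(1+\alpha/2)^{2p}<2^\delta$, invokes Lemma \ref{lem:W-set} with thresholds $\tau_{m,l}\propto(2+\alpha)^{-mp}\epsilon_n/(l+1)^2$, and runs the two-regime Bernstein analysis from Lemma \ref{lem:binomial-concentration} in which the $(l+1)^{-4}$ tail hypothesis cancels the $(l+1)^{-4}$ from $\tau_{m,l}^2$. The only cosmetic difference is that you list the two geometric constraints on $\alpha$ separately whereas the paper notes the second follows from the first.
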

\begin{proof}
Let $M$ be the largest integer less than or equal to $(p+\delta)^{-1} \log_2 \epsilon_n^{-1}$.
Let $\alpha >0$ be a sufficiently small constant such that $(1+\alpha/2)^{2p} < 2^\delta$.
For $m \leq M$ and $F \in \cP_l$ with $l \leq L$, let
\bean
	\cF_{m,F,+} &=& \bigg\{P \in \cF_n: P(\pi_m^{-1}(F) \cap B_m) - P_0 (\pi_m^{-1}(F) \cap B_m) \leq \frac{2K_1}{(l+1)^2} (2+\alpha)^{-mp}\epsilon_n \bigg\}
	\\
	\cF_{m,F,-} &=& \bigg\{P \in \cF_n: P_0(\pi_m^{-1}(F) \cap B_m) - P (\pi_m^{-1}(F) \cap B_m) \leq \frac{2K_1}{(l+1)^2} (2+\alpha)^{-mp}\epsilon_n \bigg\},
\eean
where $K_1>0$ is a large constant described below.
Then, by Lemma \ref{lem:W-set},
\bean
	P \in \bigcap_{m \leq M} \bigcap_{l \leq L} \bigcap_{F \in \cP_l} (\cF_{m,F,+} \cap \cF_{m,F,-}) \equiv \cF_n'
\eean
implies that $W_p^p(P, P_0) \leq K_2 \epsilon_n$ for some constant $K_2$.
Since $\Pi(\cK_n) \geq e^{-n\epsilon_n^2}$, by Lemma \ref{lem:ghosal2000test}, it is sufficient for \eqref{eq:W-rate} to construct a sequence $\phi_n$ of tests such that
\be \label{eq:test-rate}
	P_0 \phi_n \rightarrow 0
	\quad {\rm and} \quad
	\sup_{P \in (\cF'_n)^c} P(1-\phi_n) \leq e^{-3n\epsilon_n^2}
\ee
for every large enough $n$.

For $m \leq M$ and $F\in\cP_l$ with $l \leq L$, let
\bean
	\phi_{m,F,+} &=& \left\{\begin{array}{ll}
		1 & ~~\text{if $\bbP_n(\pi_m^{-1}(F) \cap B_m) - P_0(\pi_m^{-1}(F) \cap B_m) > \frac{K_1}{(l+1)^2} (2+\alpha)^{-mp} \epsilon_n $}
		\\
		0 & ~~\text{otherwise}
	\end{array}\right.
	\\
	\phi_{m,F,-} &=& \left\{\begin{array}{ll}
		1 & ~~\text{if $\bbP_n(\pi_m^{-1}(F) \cap B_m) - P_0(\pi_m^{-1}(F) \cap B_m) < - \frac{K_1}{(l+1)^2} (2+\alpha)^{-mp} \epsilon_n $}
		\\
		0 & ~~\text{otherwise}.
	\end{array}\right.
\eean
Then, by Lemma \ref{lem:binomial-concentration},
\bean
	P_0 \phi_{m,F,+} \leq \exp\left[ \frac{- K_1^2 (l+1)^{-4} (2+\alpha)^{-2mp} n \epsilon_n^2}{2\{P_0(\pi_m^{-1}(F) \cap B_m) + K_1 (l+1)^{-2} (2+\alpha)^{-mp} \epsilon_n /3 \}}\right].
\eean
If $P_0(\pi_m^{-1}(F) \cap B_m)> K_1 (l+1)^{-2} (2+\alpha)^{-mp} \epsilon_n /3$, then
\bean
	&& P_0 \phi_{m,F,+} 
	\leq \exp\left[ -\frac{K_1^2 (l+1)^{-4} (2+\alpha)^{-2mp} n \epsilon_n^2}{4P_0(\pi_m^{-1}(F) \cap B_m)} \right] 
	\\
	&& \leq \exp\left[ -\frac{K_1^2 (2+\alpha)^{-2mp} n \epsilon_n^2}{4K 2^{-(2p+\delta)m} } \right] 
	= \exp\left[ -\frac{K_1^2 \beta^m n\epsilon_n^2}{4K} \right],
\eean
where $\beta = 2^{\delta} (1+\alpha/2)^{-2p}  > 1$.
Otherwise,
\bean
	&& P_0 \phi_{m,F,+} 
	\leq \exp\left[ -\frac{3}{4} K_1 (l+1)^{-2} (2+\alpha)^{-mp} n \epsilon_n \right] 
	\\
	&& \leq \exp\left[ -\frac{3}{4} K_1 n \Big(\frac{1}{p}\log_2 \epsilon_n^{-1} + 1\Big)^{-2} \epsilon_n^{1 + \frac{p \log_2(2+\alpha)}{p+\delta}} \right]
	\leq e^{ -K_1 n \epsilon_n^2 }
\eean
for large enough $n$, where the second inequality holds because $m \leq M \leq (p+\delta)^{-1} \log_2 \epsilon_n^{-1}$ and $l \leq L \leq (\log_2 \epsilon_n^{-1})/p$, and the third inequality holds because 
\bean
	\frac{p \log_2(2+\alpha)}{p+\delta} = \frac{\log_2(2+\alpha)^p}{\log_2 2^{p+\delta}}
	\quad \text{and} \quad \frac{(2+\alpha)^p}{ 2^{p+\delta}} = \frac{(1+\alpha/2)^p}{2^\delta} < 1.
\eean
Since $\epsilon_n \geq \sqrt{(\log n)/n}$, we have
\bean
	\sum_{m \leq M} \sum_{l \leq L} \sum_{F \in \cP_l} P_0 \phi_{m,F,+} 
	\leq ML2^L \max_{m \leq M} \max_{l \leq L} \max_{F \in \cP_l} P_0 \phi_{m,F,+} 
	\\
	\leq \frac{1}{p(p+\delta)} \Big(\log_2 \epsilon_n^{-1}\Big)^2 \bigg( \frac{1}{\epsilon_n} \bigg)^{1/p} \max_{m \leq M} \max_{l \leq L} \max_{F \in \cP_l} P_0 \phi_{m,F,+} \longrightarrow 0
\eean
as $n \rightarrow \infty$ provided that $K_1$ is large enough.
Also, if $K_1$ is sufficiently large, for $P \in \cF_{m,F,+}^c$ with $F \in \cP_l$,
\bean
	P(1 - \phi_{m,F,+}) 
	= P \bigg(\bbP_n(\pi_m^{-1}(F) \cap B_m) - P(\pi_m^{-1}(F) \cap B_m) \leq - \frac{K_1}{(l+1)^2} (2+\alpha)^{-mp} \epsilon_n \bigg)
	\\
	\leq \exp\left[ -\frac{K_1^2 (l+1)^{-4} (2+\alpha)^{-2mp} n \epsilon_n^2}{2P(\pi_m^{-1}(F) \cap B_m)} \right] 
	\leq \exp\left[ -\frac{K_1^2 \beta^m}{2K} n\epsilon_n^2  \right]
	\leq e^{-3n\epsilon_n^2}
\eean
for large enough $n$, where the first inequality holds by Lemma \ref{lem:binomial-concentration}.
A similar inequalities for $P_0 \phi_{m,F,-}$ and $P(1-\phi_{m,F,-})$ can also be obtained.
Therefore, if we define
\bean
	\phi_n = \max_{m \leq M} \max_{l \leq L} \max_{F\in\cP_l} (\phi_{m,F,+} \vee \phi_{m,F,-})
\eean
and $K_1$ is sufficiently large, then $\phi_n$ satisfies \eqref{eq:test-rate} for all large enough $n$.
This completes the proof.
\end{proof}

\medskip
\noindent {\it Proof of Theorem \ref{thm:Wp-rate} for $p > 1$.}
We first claim that if
\bean
	P_0(B_m) \leq K 2^{-(2p+\delta)m}
	\quad \text{for $m \geq 0$}
\eean
and $\Pi(\cF_0^c \mid X_1, \ldots, X_n) \rightarrow 0$ in probability, where
\bean
	\cF_0 = \bigcap_{m \geq 0}
	\Big\{ P: P(B_m) \leq K 2^{-(2p+\delta)m} \Big\},
\eean
then \eqref{eq:W-rate} holds for some constant $K'$.
The proof of this claim is the same to that of Theorem \ref{thm:Wp-rate-general} if we replace $\cF_n$ by $\cF_0$ and eliminate the factors $(l+1)^{-2}$ and $(l+1)^{-4}$ in all equations, which is possible due to the second assertion of Lemma \ref{lem:W-set}.

Once we adjust the constant $K$, two conditions of the claim is satisfied by \eqref{eq:moment-prob}.
Hence the proof is complete.
\qed

\medskip
\noindent {\it Proof of Theorem \ref{thm:Wp-rate} for $p=1$.}
If \eqref{eq:condition-B} and \eqref{eq:condition-F2} hold with $p=1$, then it holds that
\be \label{eq:W1-bound}
	W_1(P, Q) \leq K' \left[2^{-L} + L\epsilon + 2^{-(1+\delta)M} \right].
\ee
This can be proved as in Lemma \ref{lem:W-set}.
The only difference is that the last term in \eqref{eq:tech-l} is bounded as
\bean
	\kappa_p \sum_{m=0}^M 2^{m} \sum_{l=1}^L 2^{-l} \sum_{F \in \cP_l} \Big| P(\pi_m^{-1}(F) \cap B_m) - Q(\pi_m^{-1}(F) \cap B_m) \Big|
	\\
	\leq K \kappa_p \epsilon \sum_{m=0}^M \sum_{l=1}^L \Big( \frac{2}{2+\alpha} \Big)^{m} \leq K' L \epsilon,
\eean
where $K'$ is a constant depending only on $\alpha, K$ and $p$.

As in the proof of Theorem \ref{thm:Wp-rate}, we next claim that if
\bean
	P_0(B_m) \leq K 2^{-(2+\delta)m}
	\quad \text{for $m \geq 0$}
\eean
and $\Pi(\cF_0^c \mid X_1, \ldots, X_n) \rightarrow 0$ in probability, where
\bean
	\cF_0 = \bigcap_{m \geq 0}
	\Big\{ P: P(B_m) \leq K 2^{-(2+\delta)m} \Big\},
\eean
then 
\bean
	\Pi\bigg( W_p^p(P, P_0) \geq K' \epsilon_n \log \epsilon_n^{-1} ~\Big|~ X_1, \ldots, X_n\bigg) \longrightarrow 0
	\quad \text{in probability}
\eean
for some constant $K'$.
To prove this, define $L, M$ and $\alpha$ as in Theorem \ref{thm:Wp-rate-general} with $p=1$.
Also, for $m \leq M$ and $F \in \cP_l$ with $l \leq L$, let
\bean
	\cF_{m,F,+} &=& \bigg\{P \in \cF_0: P(\pi_m^{-1}(F) \cap B_m) - P_0 (\pi_m^{-1}(F) \cap B_m) \leq 2K_1 (2+\alpha)^{-m}\epsilon_n \bigg\}
	\\
	\cF_{m,F,-} &=& \bigg\{P \in \cF_0: P_0(\pi_m^{-1}(F) \cap B_m) - P (\pi_m^{-1}(F) \cap B_m) \leq 2K_1 (2+\alpha)^{-m}\epsilon_n \bigg\},
\eean
where $K_1>0$ is a large constant as in the proof of Theorem \ref{thm:Wp-rate-general}.
Then, by \eqref{eq:W1-bound},
\bean
	P \in \bigcap_{m \leq M} \bigcap_{l \leq L} \bigcap_{F \in \cP_l} (\cF_{m,F,+} \cap \cF_{m,F,-}) \equiv \cF_n'
\eean
implies that $W_p^p(P, P_0) \leq K_2 \epsilon_n \log \epsilon_n^{-1}$ for some constant $K_2$.
Once we change the definition of $\phi_n$ as
\bean
	\phi_n = \max_{m \leq M} \max_{l \leq L} \max_{F\in\cP_l} (\phi_{m,F,+} \vee \phi_{m,F,-}),
\eean
where
\bean
	\phi_{m,F,+} &=& \left\{\begin{array}{ll}
		1 & ~~\text{if $\bbP_n(\pi_m^{-1}(F) \cap B_m) - P_0(\pi_m^{-1}(F) \cap B_m) > K_1 (2+\alpha)^{-m} \epsilon_n $}
		\\
		0 & ~~\text{otherwise}
	\end{array}\right.
	\\
	\phi_{m,F,-} &=& \left\{\begin{array}{ll}
		1 & ~~\text{if $\bbP_n(\pi_m^{-1}(F) \cap B_m) - P_0(\pi_m^{-1}(F) \cap B_m) < - K_1 (2+\alpha)^{-m} \epsilon_n $}
		\\
		0 & ~~\text{otherwise},
	\end{array}\right.
\eean
the remaining proof of the claim is the same to that of Theorem \ref{thm:Wp-rate-general}.

Once we adjust the constant $K$, two conditions of the claim is satisfied by \eqref{eq:moment-prob}.
Hence the proof is complete.
\qed

\subsection{Proof of Theorem \ref{thm:W_infty}}

Let $\cF_\epsilon = \{P \in \cF_0: W_\infty(P, P_0) \leq \epsilon\}$.
We will show that for every small enough $\epsilon \geq K_1 \sqrt{(\log n)/n}$ and $n \geq n_0$, there exists a test $\phi$ such that
\be\label{eq:W_inf-test}
	P_0 \phi \leq e^{-K_2 n\epsilon^2}
	\quad \textrm{and} \quad
	\sup_{P \in \cF_{2\epsilon}^c} P(1-\phi) \leq e^{-K_2n\epsilon^2},
\ee
where $K_1, K_2$ and $n_0$ are constants depending only on $c_0$.
Since $\Pi(\cK_n) \geq e^{-n \epsilon_n^2}$, \eqref{eq:W_inf-test} and Lemma \ref{lem:ghosal2000test} guarantees \eqref{eq:W_inf-rate} for large enough constant $K>0$.

Let $\epsilon > 0$ be given.
Let $N$ be the smallest integer greater than or equal to $\epsilon^{-1}$.
Let $I_j = [(j-1)\epsilon, j\epsilon)$ for $j=1, \ldots, N-1$ and $I_N = [(N-1)\epsilon, 1]$.
Let $I_{jk} = \cup_{l=j}^{j+k-1} I_l$ for $j=1, \ldots, N$ and $k = 1, \ldots, N-j+1$.
Let $\scrI$ and $\scrB$ be the collections of every interval $I_{jk}$ and every finite union of $I_{jk}$, respectively.
Note that the cardinalities of $\scrI$ and $\scrB$ are $N(N+1)/2$ and $2^N-1$, respectively.

We first claim that for $P \in \cF_0$,
\be \label{eq:I_jk-dev}
	P(I_{jk}) - P_0(I_{jk}) \leq \frac{c_0 \epsilon}{2} \;\; \textrm{for every $j$ and $k$ implies that $W_\infty(P_0, P) \leq 2\epsilon$.}
\ee
If $B$ is either $[0,1]$ or $[0, (N-1)\epsilon)$, it is obvious that $P(B) \leq P_0(B^{\epsilon})$.
Also, for $B = I_{jk}$ for some $(j,k)$, with $B \neq [0,1]$ and $B \neq [0, (N-1)\epsilon)$, 
\bean
	P(B) \leq P_0(B) + \frac{c_0 \epsilon}{2} < P_0(B^{\epsilon}).
\eean
Thus, $P(B) \leq P_0(B^\epsilon)$ for every $B \in \scrI$.
For $B\in\scrB-\scrI$, we have $B = \cup_{l=1}^L B_l$ for some $L \geq 2$, where $B_l \in \scrI$ and $B_l$'s are $\epsilon$-separated.
Thus,
\bean
	P_0(B^{\epsilon}) \geq \sum_{l=1}^L P_0(B_l) + (L-1) c_0\epsilon.
\eean
It follows by \eqref{eq:I_jk-dev} that
\bean
	P(B) = \sum_{l=1}^L P(B_l) \leq \sum_{l=1}^L P_0(B_l) + \frac{Lc_0\epsilon}{2} \leq \sum_{l=1}^L P_0(B_l) + (L-1) c_0\epsilon \leq P_0(B^{\epsilon}).
\eean
Thus, we have $P(B) \leq P_0(B^{\epsilon})$ for every $B \in \scrB$.
Next, for any Borel subset $A$ of $[0,1]$, let $J = \{j: A\cap I_j \neq \emptyset\}$ and $C = \cup_{j \in J} I_j$.
Then, we have $C \in \scrB$ and $A \subset C \subset A^{\epsilon}$.
Therefore,
\bean
	P(A) \leq P(C) \leq P_0(C^{\epsilon})  \leq P_0(A^{2\epsilon}).
\eean
This proves \eqref{eq:I_jk-dev}.

By \eqref{eq:I_jk-dev}, 
Then, $\cF_{2\epsilon} \supset \cap_{j,k} \cF_{jk}$, where
\bean
	\cF_{jk} = \{P: P(I_{jk}) - P_0(I_{jk}) \leq c_0\epsilon/2\}.
\eean
Define test functions $\phi_{jk}$ as $\phi_{jk}=1$ if
\bean
	\bbP_n(I_{jk}) > P_0(I_{jk}) + \frac{c_0 \epsilon}{4}
\eean
and $\phi_{jk}=0$ otherwise.
Then, for every $P \in \cF_{jk}^c$, we have
\bean
	&& P(1-\phi_{jk}) = P\left\{ \bbP_n(I_{jk}) \leq P_0(I_{jk}) + \frac{c_0 \epsilon}{4} \right\}
	\\
	&& = P\left\{ \bbP_n(I_{jk}) \leq P(I_{jk}) + P_0(I_{jk}) - P(I_{jk}) + \frac{c_0 \epsilon}{4} \right\}
	\\
	&& \leq P\left\{ \bbP_n(I_{jk}) < P(I_{jk}) - \frac{c_0 \epsilon}{4} \right\}
	\leq \exp\left[ - \frac{nc_0^2\epsilon^2}{8}\right],
\eean
where the last inequality holds by the Hoeffding's inequality.
Let $\phi = \max_{j,k} \phi_{jk}$.
Applying the Hoeffding's inequality again, we have
\be \label{eq:test}
	P_0 \phi \leq \sum_{j,k} P_0 \phi_{jk} \leq \frac{N(N+1)}{2} \exp\left[ - \frac{nc_0^2\epsilon^2}{8}\right] \leq \exp\left[ 2\log(\epsilon^{-1} + 1) - \frac{nc_0^2\epsilon^2}{8}\right].
\ee
Therefore, we can choose constants $K_1, K_2 > 0$ and $n_0$ such that if $\epsilon \geq K_1  \sqrt{(\log n)/n}$, then the right hand side of \eqref{eq:test} is bounded by $\exp(-K_2 n \epsilon^2)$ for every $n \geq n_0$.
This completes the proof of \eqref{eq:W_inf-test}.
\qed

\subsection{Proof of Theorem \ref{thm:dpm-specific}}

\begin{lemma} \label{lem:gamma-bound}
There exist universal constants $c_1, c_2 > 0$ such that
\bean
	\frac{c_1}{\epsilon} \leq \Gamma(\epsilon) \leq \frac{c_2}{\epsilon}
\eean
for every $\epsilon \in (0,1]$.
\end{lemma}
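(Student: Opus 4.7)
The plan is to exploit the functional equation $\Gamma(\epsilon+1)=\epsilon\,\Gamma(\epsilon)$, which reduces the claim to controlling $\Gamma$ on a compact interval bounded away from its singularity at $0$. Specifically, rewriting $\Gamma(\epsilon)=\Gamma(\epsilon+1)/\epsilon$, the desired two-sided inequality becomes equivalent to the assertion that $\Gamma(\epsilon+1)$ is bounded above and below by positive universal constants for $\epsilon\in(0,1]$, i.e.\ that $\Gamma$ is bounded above and below by positive constants on the interval $[1,2]$.

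First I would recall that $\Gamma$ is continuous (indeed smooth and strictly positive) on $(0,\infty)$. Restricted to the compact interval $[1,2]$, it therefore attains a positive minimum $c_1:=\min_{t\in[1,2]}\Gamma(t)>0$ and a finite maximum $c_2:=\max_{t\in[1,2]}\Gamma(t)<\infty$. (For concreteness one can take $c_1$ to be any lower bound below the well-known minimum $\Gamma(t^\ast)\approx 0.8856$ attained near $t^\ast\approx 1.4616$, and $c_2=\Gamma(1)=\Gamma(2)=1$, though explicit values are not needed.)

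Next, for any $\epsilon\in(0,1]$ we have $\epsilon+1\in(1,2]\subset[1,2]$, hence
\begin{equation*}
    c_1\leq \Gamma(\epsilon+1)\leq c_2.
\end{equation*}
Dividing by $\epsilon>0$ and using the functional equation $\Gamma(\epsilon)=\Gamma(\epsilon+1)/\epsilon$ yields
\begin{equation*}
    \frac{c_1}{\epsilon}\leq \Gamma(\epsilon)\leq \frac{c_2}{\epsilon},
\end{equation*}
which is the stated bound.

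There is essentially no obstacle: the only thing one needs beyond the definition of $\Gamma$ is its continuity and positivity on $(0,\infty)$ together with the recursion $\Gamma(x+1)=x\Gamma(x)$, both of which are standard. The constants $c_1,c_2$ are genuinely universal since they depend only on the fixed compact interval $[1,2]$.
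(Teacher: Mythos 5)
Your proof is correct, but it takes a genuinely different route from the paper's. You exploit the functional equation $\Gamma(\epsilon)=\Gamma(\epsilon+1)/\epsilon$ and then invoke continuity and positivity of $\Gamma$ on the compact interval $[1,2]$ to produce the two-sided bound; the argument is short, conceptual, and does not touch the integral representation at all. The paper instead works directly with $\Gamma(\epsilon)=\int_0^\infty x^{\epsilon-1}e^{-x}\,dx$, splits at $x=1$, observes that $\int_1^\infty x^{\epsilon-1}e^{-x}\,dx$ is bounded above and below by positive constants uniformly in $\epsilon\in(0,1]$, and handles $\int_0^1 x^{\epsilon-1}e^{-x}\,dx$ by sandwiching $e^{-x}$ between $e^{-1}$ and $1$ and computing $\int_0^1 x^{\epsilon-1}\,dx=1/\epsilon$ exactly, which directly exhibits the $1/\epsilon$ blow-up. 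Your approach is cleaner and requires less computation; the paper's approach has the small advantage of being self-contained from the integral definition and of producing explicit constants (e.g.\ $c_1=e^{-1}$) with no appeal to extreme value theory on $[1,2]$. Either is perfectly acceptable.
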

\begin{proof}
Note that
\bean
	\Gamma(\epsilon) = \int_0^\infty x^{\epsilon-1} e^{-x} dx 
	= \int_0^1 x^{\epsilon-1} e^{-x} dx + \int_1^\infty x^{\epsilon-1} e^{-x} dx.
\eean
It is easy to show that there exist constants $a_1$ and $a_2$ such that
\bean
	a_1 \leq \int_1^\infty x^{\epsilon-1} e^{-x} dx \leq a_2
\eean
for every $\epsilon \in (0,1]$.
The assertion follows because $e^{-1} \leq e^{-x} \leq 1$ for every $x \in (0,1]$ and $\int_0^1 x^{\epsilon-1} dx = \epsilon^{-1}$.
\end{proof}

\begin{lemma} \label{lem:beta}
Suppose that $X \sim {\rm Beta}(\alpha\epsilon, \alpha(1-\epsilon))$, $\alpha\epsilon \leq 1$ and $\alpha(1-\epsilon) \geq 1$.
Then,
\bean
	P(X > t) \leq C_\alpha(1 - t^{\alpha\epsilon}),
\eean
where $C_\alpha$ is a constant depending only on $\alpha$.
\end{lemma}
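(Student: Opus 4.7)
The plan is to proceed by a direct bound on the beta density. Writing $f$ for the density of $X$, we have
\bean
	f(x) = \frac{\Gamma(\alpha)}{\Gamma(\alpha\epsilon)\,\Gamma(\alpha(1-\epsilon))}\, x^{\alpha\epsilon-1} (1-x)^{\alpha(1-\epsilon)-1}
\eean
for $x \in (0,1)$. The hypothesis $\alpha(1-\epsilon)\geq 1$ makes the exponent $\alpha(1-\epsilon)-1$ nonnegative, so $(1-x)^{\alpha(1-\epsilon)-1}\leq 1$ for every $x\in[0,1]$. This is exactly the place the assumption $\alpha(1-\epsilon)\geq 1$ is used, and it is what allows us to reduce the tail to a pure power integral.

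Using this bound inside the tail integral gives
\bean
	P(X>t) \;\leq\; \frac{\Gamma(\alpha)}{\Gamma(\alpha\epsilon)\,\Gamma(\alpha(1-\epsilon))}\int_t^1 x^{\alpha\epsilon-1}\,dx \;=\; \frac{\Gamma(\alpha)}{\alpha\epsilon\,\Gamma(\alpha\epsilon)\,\Gamma(\alpha(1-\epsilon))}\bigl(1-t^{\alpha\epsilon}\bigr),
\eean
which already has the desired factor $(1-t^{\alpha\epsilon})$. It remains to bound the multiplicative constant uniformly in $\epsilon$.

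The remaining step is where Lemma \ref{lem:gamma-bound} enters. Since $\alpha\epsilon\in(0,1]$, that lemma yields $\alpha\epsilon\,\Gamma(\alpha\epsilon)\geq c_1$ for the universal constant $c_1$. For the other gamma factor, $\alpha(1-\epsilon)\in[1,\alpha]$, and $\Gamma$ is continuous and strictly positive on this compact interval, so $\Gamma(\alpha(1-\epsilon))\geq c_3(\alpha)$ for a constant depending only on $\alpha$ (one may take $c_3(\alpha)=\min_{s\in[1,\alpha]}\Gamma(s)>0$). Combining,
\bean
	\frac{\Gamma(\alpha)}{\alpha\epsilon\,\Gamma(\alpha\epsilon)\,\Gamma(\alpha(1-\epsilon))} \;\leq\; \frac{\Gamma(\alpha)}{c_1\,c_3(\alpha)} \;=:\; C_\alpha,
\eean
which depends only on $\alpha$, giving the claim.

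There is no real obstacle here; the proof is a two-line calculation once one notices that the assumption $\alpha(1-\epsilon)\geq 1$ is what removes the $(1-x)$ factor, and that the assumption $\alpha\epsilon\leq 1$ is what makes Lemma \ref{lem:gamma-bound} applicable to tame the $1/\Gamma(\alpha\epsilon)$ blow-up as $\epsilon\downarrow 0$. The two hypotheses are precisely calibrated so that the degeneracies of the beta distribution at $\epsilon\to 0$ and $\epsilon\to 1$ are both controlled.
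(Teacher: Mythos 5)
Your proof is correct and takes essentially the same route as the paper: bound $(1-x)^{\alpha(1-\epsilon)-1}\le 1$ using $\alpha(1-\epsilon)\ge 1$, control $1/\Gamma(\alpha\epsilon)$ via Lemma \ref{lem:gamma-bound} (applicable since $\alpha\epsilon\le 1$), bound $\Gamma(\alpha(1-\epsilon))$ below by a constant depending only on $\alpha$, and integrate the power $x^{\alpha\epsilon-1}$. You are simply a bit more explicit than the paper about why $\Gamma(\alpha(1-\epsilon))$ is uniformly bounded away from zero.
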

\begin{proof}
Let $p$ be the pdf of $X$, that is,
\bean
	p(x) = \frac{\Gamma(\alpha)}{\Gamma(\alpha\epsilon) \Gamma(\alpha(1-\epsilon))}
	x^{\alpha\epsilon-1} (1-x)^{\alpha(1-\epsilon)-1}.
\eean
By Lemma \ref{lem:gamma-bound},
\bean
	P(X > t) = \int_t^1 p(x) dx \leq c_\alpha \epsilon \int_t^1 x^{\alpha\epsilon-1} dx
	= \frac{c_\alpha}{\alpha} (1 - t^{\alpha\epsilon}),
\eean
where $c_\alpha$ is a constant depending only on $\alpha$.
\end{proof}

\begin{lemma} \label{lem:set-rate}
Suppose that $\Pi(\cK_n) \geq e^{-n\epsilon_n^2}$ and $\epsilon_n \geq \sqrt{(\log n)/n}$. Then, there exists a universal constant $K>0$ such that
\bean
	\Pi\Big( \big| P(B_m) - P_0(B_m) \big| \leq K \epsilon_n ~ \forall m \leq  C \log \epsilon_n^{-1}  \mid X_1, \ldots, X_n \Big) \rightarrow 1 \quad\text{in probability}
\eean
for every $C > 0$.
\end{lemma}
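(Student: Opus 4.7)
The plan is to invoke Lemma \ref{lem:ghosal2000test} with the good set
\bean
\cF_n = \Big\{P:\, |P(B_m)-P_0(B_m)|\leq K\epsilon_n \text{ for all } m\leq M_n\Big\},\qquad M_n=\lfloor C\log\epsilon_n^{-1}\rfloor,
\eean
and construct uniformly consistent tests based on the empirical frequencies $\bbP_n(B_m)$. The KL condition $\Pi(\cK_n)\geq e^{-n\epsilon_n^2}$ is already assumed, so only the test condition needs to be verified.

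For each $m\leq M_n$, I would define the two one-sided tests
\bean
\phi_{m,+} = \Indic\Big\{\bbP_n(B_m) - P_0(B_m) > K\epsilon_n/2\Big\},\qquad
\phi_{m,-} = \Indic\Big\{\bbP_n(B_m) - P_0(B_m) < -K\epsilon_n/2\Big\},
\eean
and set $\phi_n = \max_{m\leq M_n}(\phi_{m,+}\vee \phi_{m,-})$. Since $\bbP_n(B_m)$ is an average of i.i.d.\ Bernoullis, Hoeffding's inequality yields $P_0\phi_{m,\pm}\leq e^{-K^2 n\epsilon_n^2/2}$. By a union bound, $P_0\phi_n\leq 2(M_n+1)e^{-K^2 n\epsilon_n^2/2}$; because $\epsilon_n\geq\sqrt{(\log n)/n}$ forces $M_n\leq C\log n$ and $e^{-K^2 n\epsilon_n^2/2}\leq n^{-K^2/2}$, the type I error vanishes for any $K>0$.

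For the type II error, if $P\in\cF_n^c$ there is some $m\leq M_n$ with $|P(B_m)-P_0(B_m)|>K\epsilon_n$. On the event $\phi_n=0$ we have in particular $|\bbP_n(B_m)-P_0(B_m)|\leq K\epsilon_n/2$, which combined with the triangle inequality gives $|\bbP_n(B_m)-P(B_m)|\geq K\epsilon_n/2$. A second application of Hoeffding (now under $P$) bounds this probability by $e^{-K^2 n\epsilon_n^2/2}$, yielding
\bean
\sup_{P\in\cF_n^c} P(1-\phi_n) \leq e^{-K^2 n\epsilon_n^2/2}.
\eean
Choosing $K\geq\sqrt{6}$ makes the right side at most $e^{-3n\epsilon_n^2}$, as required by Lemma \ref{lem:ghosal2000test}, which then delivers $\Pi(\cF_n^c\mid X_1,\ldots,X_n)\to 0$ in probability.

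There is essentially no hard step; the only thing to watch is that the cardinality $M_n+1$ coming from the union bound is poly-logarithmic in $n$ and is thus absorbed into the exponential factor precisely because of the hypothesis $\epsilon_n\geq\sqrt{(\log n)/n}$. The constant $K=\sqrt{6}$ is independent of $C$, $P_0$, and the prior, so it is universal in the stated sense.
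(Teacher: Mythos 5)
Your proposal is correct and follows essentially the same route as the paper's own proof: define indicator tests based on the deviation of the empirical frequency $\bbP_n(B_m)$ from $P_0(B_m)$, control both error types by Hoeffding's inequality, absorb the poly-logarithmic factor from the union bound using $\epsilon_n \geq \sqrt{(\log n)/n}$, and conclude via Lemma \ref{lem:ghosal2000test} with $K \geq \sqrt{6}$. The only cosmetic difference is that you split each test into two one-sided pieces $\phi_{m,\pm}$ whereas the paper uses a single two-sided indicator $\psi_m$.
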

\begin{proof}
Let $C >0$ be given.
For eacn $n$ and $m \leq C\log \epsilon_n^{-1}$, let
\bean
	\psi_m = I\Big( \big| \bbP_n(B_m) - P_0(B_m) \big| > K \epsilon_n/2 \Big),
\eean
where $K$ is a universal constant described below.
Using the Hoeffding's inequality, it is not difficult to prove that
\bean
	P_0 \psi_m \lesssim e^{-K^2n\epsilon_n^2/2}
	\quad {\rm and} \quad
	\sup_{P \in \cG_m^c} P(1-\psi_n) \lesssim e^{-K^2 n\epsilon_n^2/2},
\eean
where $\cG_m = \{P: |P(B_m) - P_0(B_m)| \leq K\epsilon_n \}$.
Let
\bean
	\phi_n = \max_{m \leq C\log \epsilon_n^{-1}} \psi_m.
\eean
Then, we have
\bean
	P_0 \phi_n \lesssim C \log \epsilon_n^{-1} e^{-K^2n\epsilon_n^2/2} \rightarrow 0
	\quad {\rm and} \quad
	\sup_{P \in \cF_n^c} P(1-\phi_n) \lesssim e^{-K^2 n\epsilon_n^2/2},
\eean
where 
\bean
	\cF_n = \bigcap_{m \leq C \log \epsilon_n^{-1}} \cG_m.
\eean
Thus, the proof is complete by Lemma \ref{lem:ghosal2000test} provided that $K^2/2 \geq 3$.
\end{proof}

\begin{lemma} \label{lem:dpm}
Let $\epsilon_n$ be a sequence such that $\epsilon_n \rightarrow 0$ and $\epsilon_n \geq \sqrt{\log n / n}$.
Let $H$ be the normal distribution with mean $\mu_H$ and variance $\sigma^2_H$.
For a Dirichlet process mixture prior \eqref{eq:dpm} with $\alpha > 1$ and $\sigma=\sigma_n \rightarrow 0$, suppose that $\Pi(\cK_n) \geq e^{-n\epsilon_n^2}$.
Also, for some $p \in [1,\infty)$, assume that $P_0(B_m) \leq K 2^{-pm}$ for every $m \geq 0$, and that $\epsilon_n \leq A n^{-p/(2+2p)}$ for every $n$, where $K$ and $A$ are constants.
Then,
\bean
	\Pi \Big( P(B_m) \leq K' 2^{-pm} ~ \forall m \geq 0 \mid X_1, \ldots, X_n\Big) \rightarrow 1 \quad \text{in probability,}
\eean
where $K'$ is a large enough constant.
\end{lemma}
\begin{proof}
Let $\widetilde \epsilon_n = L \epsilon_n$, and define $\widetilde \cK_n$ as $\cK_n$ after replacing $\epsilon_n$ by $\widetilde\epsilon_n$, where $L$ is a large constant described below.
Then, $\Pi(\widetilde \cK_n) \geq \Pi(\cK_n) \geq e^{-n\epsilon_n^2}$.
Note that
\bean
	G(B) \sim {\rm Beta} \Big(\alpha H(B), \alpha \big(1-H(B)\big) \Big)
\eean
for any Borel set $B$.
Also, $\alpha H(\overline B_m) \leq 1$ and $\alpha (1 - H(\overline B_m)) \geq 1$ for every large enough $m$, where
\bean
	\overline B_m = (-\infty, -2^{m-1}] \cup (2^{m-1}, \infty).
\eean
Thus, by Lemma \ref{lem:beta}, for any $K' \geq 2^p$,
\be\begin{split}\label{eq:G_B_prob}
	& \Pi \big( G(B_m) > K' 2^{-pm} \big) 
	\leq \Pi \big( G(\overline B_m) > K' 2^{-pm} \big) 
	\\
	& \leq \Pi \big( G(\overline B_m) > 2^{-pm} \big) 
	\leq C_\alpha \big(1 - 2^{-pm\alpha H(\overline B_m)} \big)
\end{split}\ee
for every large enough $m$, where $C_\alpha$ is a constant depending only on $\alpha$.
Note that $1-\Phi_\sigma(x) \leq e^{-x^2/(2\sigma^2)}/2$, where $\Phi_\sigma$ is the cdf of $\phi_\sigma$, so we have
\bean
	H(\overline B_m) \leq \frac{1}{2} \left[ 
	\exp\left\{ -\frac{1}{2}\left(\frac{2^{m-1}-\mu_H}{\sigma_H}\right)^2 \right\} 
	+ \exp\left\{ -\frac{1}{2}\left(\frac{2^{m-1}+\mu_H}{\sigma_H}\right)^2 \right\} 
	\right].
\eean
Since $1 - e^{-x} \leq x$, the right hand side of \eqref{eq:G_B_prob} is bounded by
\bean
	&& \frac{\log 2}{2}  p \alpha C_\alpha m \left[ 	
	\exp\left\{ -\frac{1}{2}\left(\frac{2^{m-1}-\mu_H}{\sigma_H}\right)^2 \right\} 
	+ \exp\left\{ -\frac{1}{2}\left(\frac{2^{m-1}+\mu_H}{\sigma_H}\right)^2 \right\} 
	\right]
	\\
	&& \lesssim p \alpha C_\alpha \exp\{ -C_{\sigma_H} 2^{2m} \}
\eean
for every large enough $m$, where $C_{\sigma_H}$ is a constant depending only on $\sigma_H$.

Note that $P$ is the convolution of $G$ and $N(0, \sigma_n^2)$.
If $Y_1 = Y_2 + Y_3$, where $Y_2$ and $Y_3$ are independent random variables following $G$ and $N(0, \sigma_n^2)$, respectively, then
\bean
	P(\overline B_m) = \Pr(|Y_1| > 2^{m-1}) &\leq& \Pr(|Y_2| > 2^{m-2}) + \Pr(|Y_3| > 2^{m-2}) 
	\\
	&\leq& G(\overline B_{m-1}) + 2 \big(1- \Phi_{\sigma_n}(2^{m-2}) \big).
\eean
Hence,
\bean
	&& P(B_m) \leq P(\overline B_m) \leq G(\overline B_{m-1}) + 2 \big(1- \Phi_{\sigma_n}(2^{m-2}) \big)
	\\
	&& \leq G(\overline B_{m-1}) + \exp\left( -\frac{2^{2(m-2)}}{2 \sigma_n^2} \right)
	\leq G(\overline B_{m-1}) + 2^{-pm},
\eean
where the last inequality holds for every large enough $m$.
It follows for any constants $C > 0$ and large enough $n$ that
\bean
	&& \Pi \big( P(B_m) > (K'+1) 2^{-pm} ~ \text{for some $m \geq C \log \widetilde\epsilon_n^{-1}$} \big)
	\\
	&& \leq \Pi \big( G(\overline B_{m-1}) > K' 2^{-pm} ~ \text{for some $m \geq C \log \widetilde\epsilon_n^{-1}$} \big)
	\\
	&& = \Pi \big( G(\overline B_m) > K' 2^{-p(m+1)} ~ \text{for some $m \geq C \log \widetilde\epsilon_n^{-1} -1$} \big)
	\\
	&& \leq \sum_{m \geq C \log \widetilde\epsilon_n^{-1}-1} \Pi \big( G(\overline B_m) > 2^{-pm} \big)
	\\
	&& \lesssim p \alpha C_\alpha \sum_{m \geq C \log \widetilde\epsilon_n^{-1}-1} \exp\left( -C_{\sigma_H} 2^{2m} \right)
	\lesssim p \alpha C_\alpha \exp\left( - \frac{C_{\sigma_H}}{4} \widetilde\epsilon_n^{-2C\log2} \right).
\eean
If we take $C = (p\log 2)^{-1}$, the right hand side of the last display is bounded by 
\bean
	p\alpha C_\alpha \exp\left( -\frac{C_{\sigma_H}}{4} \widetilde\epsilon_n^{-2/p} \right).
\eean
Since $\epsilon_n \leq A n^{-p/(2+2p)}$, $n\epsilon_n^2$ is bounded by a constant multiple of $\epsilon_n^{-2/p}$ for every $n$.
Hence, if $L$ is large enough, we have that
\bean
	\Pi \Big( P(B_m) > (K'+1) 2^{-pm} ~ \text{for some $m \geq C \log \widetilde \epsilon_n^{-1}$} \mid X_1, \ldots, X_n \Big) \rightarrow 0 ~\text{in probability}
\eean
by Lemma \ref{lem:ghosal2000prior}.

Note that by Lemma \ref{lem:set-rate},
\bean
	\Pi\Big( \big| P(B_m) - P_0(B_m) \big| \leq K'' \widetilde\epsilon_n ~ \forall m \leq  C \log \widetilde\epsilon_n^{-1}  \mid X_1, \ldots, X_n \Big) \rightarrow 1 \quad\text{in probability},
\eean
where $K''$ is a constant.
Since
\bean
	P_0(B_m) + K'' \widetilde\epsilon_n \leq (K+ K'') 2^{-pm}
\eean
for every $m \leq C\log \widetilde\epsilon_n^{-1}$, the proof is complete.
\end{proof}

\medskip
It is shown in the proof of Theorem 2 in \cite{ghosal2007posterior} that $\Pi(\cK_n) \geq e^{-n\epsilon_n^2}$ with $\epsilon_n = c n^{-2/5} (\log n)^2$ for some constant $c > 0$.
For any $p < 4$, note that $\epsilon_n \leq n^{-p/(2+2p)}$ for all large enough $n$.
Hence, the proof of Theorem \ref{thm:dpm-specific} is complete by \eqref{eq:moment-prob} and Lemma \ref{lem:dpm}.
\qed

\subsection{Proof of Theorem \ref{thm:convolution}}
Denote $p_G(x) = \int k_\sigma(x-z) dG(z)$.
We use the result of \cite{nguyen2013convergence}.
It is shown in the proof of Theorem 2 in \cite{nguyen2013convergence} that
\bean
	&& W_2^2(G, G_0) 
	\\
	&& \leq C\inf_{\delta\in(0,1)} \left[ \delta^2 + \|p_G - p_{G_0}\|_1^{2(s-2)/(1+2s)} 
	\bigg\{\int_{-1/\delta}^{1/\delta} \tilde k(t)^{-2} dt \bigg\}^{(s-2)/(1+2s)}  \right]
\eean
for any $s >2$ and $G$ with $M_2(G) < \infty$, where $C$ is a constant depending only on $s$.
Note that Theorem 2 of \cite{nguyen2013convergence} assumed that $G$ and $G_0$ are discrete probability measures with bounded supports, but finiteness of the second moment suffices as discussed therein.
The right hand side of the last display tends to zero as $\|p_G - p_{G_0}\|_1 \to 0$.
It follows that for every $\epsilon > 0$,
\bean
	\Pi\Big( W_2^2(G, G_0) > \epsilon \mid X_1, \ldots, X_n\Big) \to 0.
\eean
in probability.
Since $W_2^2(P_G, P_{G_0}) \leq W_2^2(G, G_0)$, the proof is complete.
\qed

\section*{Acknowledgements}
The authors are grateful for the comments of reviewers on an earlier version of the paper.
M. Chae was supported by the National Research Foundation of Korea (NRF) grant funded by the Korea government (MSIT) (No. 2020R1F1A1A01054718).
P. De Blasi is supported by MIUR, PRIN Project 2015SNS29B and acknowledges “Dipartimenti di Eccellenza” Grant 2018-2020.

\bibliographystyle{imsart-nameyear}
\bibliography{bib-short}

\end{document}